\DeclareMathOperator*{\Gal}{Gal}
\DeclareMathOperator*{\Frob}{Frob_{L/K}}
\DeclareMathOperator*{\FrobclassfieldL}{Frob_{L/K}}
\DeclareMathOperator*{\ordone}{ord_{\nu}}
\DeclareMathOperator*{\gl2}{GL_2}
\DeclareMathOperator*{\m2}{M_2}
\newcommand{\Z}{{\mathbb Z}}
\newcommand{\A}{{\mathfrak A}}
\newcommand{\E}{{\mathcal E}}
\newcommand{\D}{{\mathcal D}}
\newcommand{\Rc}{{\mathcal R}}
\newcommand{\Ok}{{\mathcal{O}_K}}
\newtheorem{thm}{Theorem}[section]
\newtheorem{theorem}[thm]{Theorem}
\newtheorem{corollary}[thm]{Corollary}
\newtheorem{lemma}[thm]{Lemma}	
\newtheorem{proposition}[thm]{Proposition}
\theoremstyle{definition}
\theoremstyle{remark}
\newtheorem{remark}[thm]{Remark}
\newtheorem{example}[thm]{Example}
\newtheorem*{lemma*}{Lemma}
\numberwithin{equation}{section}
\title{Selectivity in Quaternion Algebras}
\author{Benjamin Linowitz}
\address{Department of Mathematics\\ 
6188 Kemeny Hall\\
Dartmouth College\\
Hanover, NH 03755, USA}
\email[] {Benjamin.D.Linowitz@dartmouth.edu}
\begin{document}

\begin{abstract} 
	We prove an integral version of the classical Albert-Brauer-Hasse-Noether theorem regarding quaternion algebras over number fields. Let $K$ be a number field with ring of integers $\mathcal O_K$, and let $\mathfrak A$ be a quaternion algebra over $K$ satisfying the Eichler condition. Let $\Omega$ be a commutative, quadratic $\mathcal{O}_K$-order and let  $\mathcal{R}\subset \mathfrak A$ be an order of full rank. Assume that there exists an embedding of $\Omega$ into $\mathcal R$. We describe a number of criteria which imply that every order in the genus of $\mathcal R$ admits an embedding of $\Omega$. In the case that the relative discriminant ideal of $\Omega$ is coprime to the level of $\mathcal R$ and the level of $\mathcal R$ is coprime to the discriminant of $\A$, we give necessary and sufficient conditions for an order in the genus of $\mathcal R$ to admit an embedding of $\Omega$. We explicitly parameterize the isomorphism classes of orders in the genus of $\mathcal R$ which admit an embedding of $\Omega$. In particular, we show that the proportion of the genus of $\mathcal{R}$ admitting an embedding of $\Omega$ is either $0$, $1/2$ or $1$. Analogous statements are proven for optimal embeddings.
\end{abstract}

\maketitle


\section{Introduction}
 
 The study of non-commutative algebras has a long and rich history with 
applications in class field theory, modular forms and geometry. 
One of the high points of this history came in 1932, when much of the 
field's foundational work was being done, with the publication 
of the Brauer-Hasse-Noether Theorem. Although his name did not appear 
on the publication, many of the results contained in the 
Brauer-Hasse-Noether theorem were proven independently by Adrian Albert 
in 1931. We are interested in the quaternionic version of their theorem:

\begin{theorem}[Albert-Brauer-Hasse-Noether]\label{theorem:abhn}
Let $\A$ be a quaternion algebra over a number field $K$ and let $L$ be a 
quadratic field extension of $K$. Then there is an embedding of $L$ into $\A$ if 
and only if no prime of $K$ which ramifies in $\A$ splits in $L$. 
\end{theorem}

In this paper we consider an integral refinement of the Albert-Brauer-Hasse-Noether theorem. Let $\mathfrak A$ be a quaternion algebra defined over a number field $K$ which satisfies the Eichler condition; that is, there exists an archimedean prime of $K$ which does not ramify in $\mathfrak A$. Let $L$ be a quadratic field extension of $K$ and $\Omega$ a quadratic order of $L$. Finally, let $\mathcal R\subset \A$ be an $\mathcal O_K$-order of full rank. We are interested in the question of when there exists an embedding of $\Omega$ into $\mathcal R$.

Chinburg and Friedman considered a special case of this question in their paper \cite{chinburg}, where they determined the maximal orders into which $\Omega$ can be embedded. We now state a simplified version of their main theorem (see \cite{chinburg} for notation):

\begin{theorem}[Chinburg-Friedman]\label{theorem:introductionchinburgfriedman}
Let $L$ be a quadratic field extension of $K$ which embeds into the quaternion algebra 
$\A$, and assume that $\A$ satisfies the Eichler condition. Then a quadratic $\mathcal{O}_K$-order $\Omega\subset L$ 
can be embedded into either all of the maximal $\Ok$-orders $\D\subset \A$, or into 
all those belonging to exactly half of the isomorphism types of maximal $\Ok$-orders in $\A$. In the latter case the maximal orders admitting 
an embedding of $\Omega$ may be described as follows. If $\mathcal R$ and $\mathcal S$ are maximal orders and $\mathcal R$ admits an embedding of $\Omega$, then
$\mathcal S$ admits an embedding of $\Omega$ if and only if the image by the reciprocity map $\mbox{Frob}_{L/K}$ of the distance ideal $\rho(\mathcal R, \mathcal S)$ is the trivial element of $\mbox{Gal}(L/K)$.
\end{theorem}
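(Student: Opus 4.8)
The plan is to convert the global embedding question into an idelic one and then into a class-field-theoretic index computation, using the Eichler condition through strong approximation. First I would dispose of all local obstructions. Since $L$ embeds into $\A$, Theorem~\ref{theorem:abhn} says that no prime of $K$ ramified in $\A$ splits in $L$, and hence for every finite place $\nu$ of $K$ the local field $L_\nu$ embeds into $\A_\nu$ and $\Omega_\nu$ embeds into \emph{every} maximal order of $\A_\nu$. Thus there is no local obstruction, and the question of which maximal orders admit an embedding of $\Omega$ is purely global.

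Next I would fix one embedding $\phi\colon \Omega \hookrightarrow \mathcal R$ and pass to the idelic parametrization of maximal orders. Identifying maximal orders with the coset space $\widehat{\A}^\times / N(\widehat{\mathcal R})$, where $N$ denotes the idelic normalizer, and identifying $\Ok$-isomorphism classes with the double cosets $\A^\times \backslash \widehat{\A}^\times / N(\widehat{\mathcal R})$ via Skolem--Noether, I would show that the maximal order attached to $x \in \widehat{\A}^\times$ admits an embedding of $\Omega$ if and only if its double coset meets the set $\A^\times\, \widehat{L}^\times\, N(\widehat{\mathcal R})$, where $\widehat{L}^\times$ is the idele group of $L$ under the fixed embedding. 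The point is that elements of $\widehat{L}^\times$ centralize $\phi(\Omega)$, so conjugation by them preserves the property of containing $\phi(\Omega)$. The dichotomy of the theorem then reduces to showing that, after applying the reduced norm, this set is either all of $\widehat{\A}^\times$ (every type embeds) or the preimage of an index-two subgroup (exactly half of the types embed).

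The heart of the argument is the index computation via the reduced norm $\operatorname{nrd}$. Invoking strong approximation together with the Hasse--Schilling--Maass description $\operatorname{nrd}(\A^\times) = K^\times_+$ (positivity at the real places of $K$ ramified in $\A$), I would identify the type set with a ray class group, namely with $\mbox{Gal}(K_\A/K)$ for the class field $K_\A$ cut out by $K^\times_+\,\operatorname{nrd}(N(\widehat{\mathcal R}))$. Under this identification the distinguished subgroup is the image of the norm group $N_{L/K}(\widehat{L}^\times) = \operatorname{nrd}(\widehat{L}^\times)$, which by Artin reciprocity for $L/K$ is $\mbox{Gal}(K_\A/(K_\A\cap L))$; its index equals $[K_\A\cap L:K]$, so it is $1$ when $L\not\subseteq K_\A$ and $2$ when $L\subseteq K_\A$. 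This yields the all-or-half dichotomy and realizes the embeddable types, in the selective case, as the trivial coset of an index-two subgroup. Finally, to extract the Frobenius criterion I would track the relative position of $\mathcal R$ and $\mathcal S$: the distance ideal $\rho(\mathcal R,\mathcal S)$ is idelically the ideal generated by $\operatorname{nrd}(x)$ for the idele $x$ carrying $\mathcal R$ to $\mathcal S$, so preservation of embeddability is equivalent to $x$ lying in the index-two subgroup, which by reciprocity is exactly the vanishing $\mbox{Frob}_{L/K}(\rho(\mathcal R,\mathcal S)) = 1$ in $\mbox{Gal}(L/K)$.

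I expect the main obstacle to be the index computation of the third step. The delicate points are handling the idelic normalizer $N(\widehat{\mathcal R})$ rather than $\widehat{\mathcal R}^\times$ and accounting correctly for the local factors of $2$ it contributes, and verifying that $\A^\times\,\widehat{L}^\times\,N(\widehat{\mathcal R})$ becomes a genuine subgroup only after applying the reduced norm. Keeping careful track of the sign conditions at the ramified real places, and of the distinction between conjugacy and $\Ok$-isomorphism of orders, will be essential to pin down that the relevant index is exactly $1$ or $2$ and not a larger power of $2$.
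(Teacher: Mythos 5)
Your global skeleton --- identifying isomorphism types with a class group via the reduced norm, strong approximation, and Hasse--Schilling--Maass, then reading off an index through Artin reciprocity --- is the same one the paper builds in Section \ref{section:cftsection} (Theorem \ref{theorem:bijection} and the class field $K(\mathcal R)$, which for maximal $\mathcal R$ is your $K_\A$). The gap is in the central claim of your second step, and it sits exactly where all the real work lies. You justify only one direction of the asserted equivalence: elements of $\widehat{L}^\times$ centralize $\phi(\Omega)$, so every order attached to an idele in $\A^\times\,\widehat{L}^\times\,\mathfrak N(\mathcal R)$ contains a conjugate of $\phi(\Omega)$. The converse --- that an order containing a conjugate of $\phi(\Omega)$ must come from an idele in this set --- is never argued, and it is not a formality: it amounts to showing that at each finite place $\nu$ the maximal orders of $\A_\nu$ containing $\phi(\Omega_\nu)$ form a single orbit under $\phi(L_\nu^\times)N(\mathcal R_\nu)$. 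Where this is true, proving it requires the local tree analysis that the paper supplies: at a prime inert in $L$ with $\Omega_\nu$ maximal, $\mathcal O_{L_\nu}$ lies in a \emph{unique} maximal order (Lemma \ref{lemma:uniquevertex}, which rests on Lemma 2.2 of \cite{chinburg}), together with the parity-of-valuation argument in the proof of Proposition \ref{proposition:case4}. You flag the index computation as the main obstacle, but that part is routine class field theory; the obstacle is this local statement.

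Worse, your equivalence is false in general, because the set $\A^\times\,\widehat{L}^\times\,\mathfrak N(\mathcal R)$ depends only on $L$ while the answer depends on $\Omega$. Suppose $L\subset K(\mathcal R)$ but some prime $\nu$ inert in $L/K$ divides the conductor of $\Omega$ (e.g.\ $\Omega=\mathcal O_K+\mathfrak q\,\mathcal O_L$ with $\mathfrak q$ inert). Then the maximal orders of $\A_\nu$ containing $\Omega_\nu$ form a ball of positive radius around the unique vertex fixed by $\mathcal O_{L_\nu}$; this ball contains adjacent vertices, hence conjugating elements whose reduced norms have odd valuation, and one can then place $\Omega$ into representatives of \emph{every} isomorphism type --- this is precisely the content of condition (3) in Theorem \ref{theorem:main} and of Proposition \ref{proposition:case3}. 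In that situation the correct alternative is ``all,'' whereas your criterion forces ``exactly half'' (since $[K(\mathcal R)\cap L:K]=2$), so your argument proves a false strengthening: selectivity is \emph{not} equivalent to $L\subseteq K(\mathcal R)$; one also needs every prime dividing $d_{\Omega/\mathcal O_K}$ to split in $L/K$. To repair the proof you must replace $\widehat{L}^\times$ by the honest local sets $X_\nu=\{\,y_\nu\in\A_\nu^\times:\phi(\Omega_\nu)\subset y_\nu\mathcal R_\nu y_\nu^{-1}\,\}$ and compute $n(X_\nu)$ prime by prime via the tree of maximal orders --- which is exactly what the paper's route through Sections \ref{section:parameterizing} and \ref{section:main} (and Chinburg--Friedman's original argument) does.
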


Theorem \ref{theorem:introductionchinburgfriedman} is especially significant because of its applications, particularly its applications to differential geometry. As our results will have similar applications, we give a brief sketch of the relevant construction. Suppose that $K\ne \mathbb Q$ is totally real and that $\A$ is unramified at a unique real place of $K$. Let $\mathcal R_1,\mathcal R_2$ be two orders of $\A$ which lie in the same genus (i.e. have locally isomorphic completions at all finite primes of $K$) but represent distinct isomorphism classes. Vigneras \cite{vigneras-isospectral} used $\mathcal R_1$ and $\mathcal R_2$ to construct compact, non-isometric hyperbolic 2-manifolds $M_1$ and $M_2$. Further, she showed that $M_1$ and $M_2$ are isospectral (have the same spectra with respect to the Laplace-Beltrami operator) if there did not exist a quadratic $\mathcal O_K$-order $\Omega$ which could be embedded into exactly one of the $\mathcal R_i$. When $\mathcal R_1$ and $\mathcal R_2$ are taken to be maximal orders, Theorem \ref{theorem:introductionchinburgfriedman} can be used to determine necessary and sufficient conditions for $M_1$ and $M_2$ to be isospectral.

Chinburg and Friedman's theorem was later generalized to Eichler orders independently by Guo and Qin \cite{guo-qin} and Chan and Xu \cite{chan-xu}. It is interesting to note that whereas Guo and Qin make use of tree-theoretic techniques (as Chinburg and Friedman did) in their generalization, Chan and Xu instead use the representation theory of spinor genera.

In this paper we obtain a number of generalizations of Chinburg and Friedman's theorem. Central to our arguments will be the class field $K(\mathcal R)$ associated to the order $\mathcal R$ (defined in Section \ref{section:cftsection} immediately after the proof of Theorem \ref{theorem:bijection}). The class field $K(\mathcal R)$ is an abelian extension of $K$ whose degree is the number of isomorphism classes in the genus of $\mathcal R$, whose Galois group is of exponent $2$ and whose conductor is divisible only by the prime divisors of the level ideal  $N_\mathcal R$ of $\mathcal R$ (defined in Section \ref{section:notation}); that is, the primes $\nu$ of $K$ for which $\mathcal R_\nu$ is not maximal.

In Section \ref{subsection:obstructionstoselectivity} we consider arbitrary orders $\mathcal R\subset \A$ and describe a number of criteria which, if satisfied, imply that every order in the genus of $\mathcal R$ admits an embedding of a given commutative, quadratic $\mathcal O_K$-order $\Omega$. The main result of the section is 

\begin{theorem}\label{theorem:obsructiontheorem} Let $L$ be a maximal subfield of $\A$, $\Omega\subset L$ a quadratic $\mathcal O_K$-order and assume that an embedding of $\Omega$ into $\mathcal R$ exists.
	\begin{enumerate}
		\item If $L\not\subset K(\mathcal{R})$, then every order in the 
		genus of $\mathcal{R}$ admits an embedding of $\Omega$.
		
		\item If $L\subset K(\mathcal{R})$, then the proportion of isomorphism classes in the genus of $\mathcal R$ whose representatives admit an embedding of $\Omega$ is at least $1/2$.
	\end{enumerate}

	\end{theorem}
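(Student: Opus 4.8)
The plan is to translate, for an arbitrary order $\mathcal S$ in the genus of $\mathcal R$, the existence of an embedding $\Omega\hookrightarrow\mathcal S$ into a condition on the element of $\Gal(K(\mathcal R)/K)$ attached to $\mathcal S$ by Theorem \ref{theorem:bijection}, and then to read off both statements from class field theory according to whether or not $L\subseteq K(\mathcal R)$. First I would fix the given embedding $\phi\colon\Omega\hookrightarrow\mathcal R$, which extends to $\phi\colon L\hookrightarrow\A$, and record the local input. Since $\mathcal S$ lies in the genus of $\mathcal R$, we have $\mathcal S_\nu\cong\mathcal R_\nu$ at every finite prime $\nu$, so the local embeddings $\Omega_\nu\hookrightarrow\mathcal R_\nu$ coming from $\phi$ furnish local embeddings $\Omega_\nu\hookrightarrow\mathcal S_\nu$ everywhere; thus the only possible obstruction to embedding $\Omega$ into $\mathcal S$ is global. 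Writing $\mathcal S_\nu=b_\nu\mathcal R_\nu b_\nu^{-1}$ with a connecting idele $\widehat b=(b_\nu)\in\widehat\A^{\times}$, a global embedding $\Omega\hookrightarrow\mathcal S$ exists exactly when $\phi$ can be adjusted by a global $g\in\A^{\times}$ with $g\,\phi(L)\,g^{-1}\subseteq\mathcal S$ locally at every $\nu$. Setting $c=b^{-1}g$ and analyzing the condition $c_\nu\,\phi(L)\,c_\nu^{-1}\subseteq\mathcal R_\nu$ prime by prime shows that solvability is unobstructed at each $\nu$, and that the existence of a single global $g$ is governed by the image of $\widehat b$ in the double coset space $\A^{\times}\backslash\widehat\A^{\times}/\bigl(\phi(\widehat L^{\times})\,\widehat N(\mathcal R)\bigr)$, where $\widehat N(\mathcal R)=\prod_\nu N(\mathcal R_\nu)$.

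The next step is to descend to $K^{\times}$ by the reduced norm. Applying $\mathrm{nrd}$ carries $\phi(\widehat L^{\times})$ onto $N_{L/K}(\widehat L^{\times})$ and carries the class of $\widehat b$ to a representative of the distance ideal $\rho(\mathcal R,\mathcal S)$; by the construction of $K(\mathcal R)$ in Theorem \ref{theorem:bijection} the image of $\rho(\mathcal R,\mathcal S)$ under the reciprocity map of $K(\mathcal R)/K$ is precisely the element $\sigma_{\mathcal S}\in\Gal(K(\mathcal R)/K)$ attached to $\mathcal S$. I therefore obtain the sufficient criterion: $\Omega$ embeds into $\mathcal S$ whenever $\sigma_{\mathcal S}$ lies in the image of $N_{L/K}(\widehat L^{\times})$ under the Artin map of $K(\mathcal R)/K$. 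By the standard norm-group computation of class field theory this image is exactly $\Gal\bigl(K(\mathcal R)/(K(\mathcal R)\cap L)\bigr)$. The Eichler condition enters here—and in the proof of Theorem \ref{theorem:bijection}—through strong approximation, which yields the surjectivity of the reduced norm at the idelic level and the identification of the type set of $\mathcal R$ with $\Gal(K(\mathcal R)/K)$.

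Both assertions now follow by splitting on $K(\mathcal R)\cap L$, using that $[L:K]=2$. If $L\not\subseteq K(\mathcal R)$ then $K(\mathcal R)\cap L=K$, so the subgroup above is all of $\Gal(K(\mathcal R)/K)$; every $\sigma_{\mathcal S}$ satisfies the criterion and hence every order in the genus admits an embedding of $\Omega$, giving part (1). If instead $L\subseteq K(\mathcal R)$ then $K(\mathcal R)\cap L=L$ and the subgroup is $\Gal(K(\mathcal R)/L)$, of index $[L:K]=2$; the criterion holds on a subgroup of index $2$, so at least half of the isomorphism classes in the genus admit an embedding, giving part (2). In this case the criterion may be restated in the form of the Chinburg--Friedman condition, since for $L\subseteq K(\mathcal R)$ one has $\sigma_{\mathcal S}\in\Gal(K(\mathcal R)/L)$ if and only if $\Frob(\rho(\mathcal R,\mathcal S))$ is trivial in $\Gal(L/K)$.

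I expect the main obstacle to lie in the first reduction: pinning down, by a careful local analysis of the sets of conjugators together with strong approximation, that the global embedding problem for $\mathcal S$ is controlled by the single double coset class of $\widehat b$, and that the reduced norm records this class faithfully as $\sigma_{\mathcal S}$. It is worth emphasizing that this argument produces only a \emph{sufficient} condition for embeddability—the set of admissible classes contains the subgroup $\Gal\bigl(K(\mathcal R)/(K(\mathcal R)\cap L)\bigr)$—which is precisely why part (2) asserts that the proportion is \emph{at least} $1/2$ rather than equal to it. Establishing the reverse implication, and thereby the exact value, is exactly where the coprimality hypotheses on the discriminant of $\Omega$, the level $N_{\mathcal R}$, and the discriminant of $\A$ are needed, and this is carried out later in the paper.
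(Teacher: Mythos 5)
Your proposal is correct in outline, but it proves the theorem by a genuinely different route than the paper. The paper's argument (Propositions \ref{proposition:case2} and \ref{proposition:Linclassfield}) is constructive and tree-theoretic: Lemma \ref{lemma:splitornot} (Chebotarev) produces generators $\overline{e_{\nu_i}}$ of $G_\mathcal{R}$ represented by primes that split in $L$ (plus a single inert prime when $L\subset K(\mathcal R)$); at each split prime the diagonal embedding of Lemma 2.2 of \cite{brztwo} places $\Omega$ inside two \emph{adjacent} maximal orders of the tree; and the explicit parameterization $\{\mathcal D^\gamma\}$ of Section \ref{section:parameterizing} then exhibits representatives of all, respectively at least half, of the isomorphism classes that visibly contain $\Omega$. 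You instead argue idelically: embeddability of $\Omega$ into $\mathcal S$ follows from the membership $\widehat b\in\A^\times\,\phi(J_L)\,\mathfrak N(\mathcal R)$ (where $J_L$ is the idele group of $L$), this condition is pushed down to $J_K$ by the reduced norm, and class field theory identifies the resulting set of admissible classes with one containing $\Gal\bigl(K(\mathcal R)/(K(\mathcal R)\cap L)\bigr)$, after which both cases fall out of $K(\mathcal R)\cap L\in\{K,L\}$. This is essentially the norm-theoretic/spinor-genus method of Chan--Xu \cite{chan-xu} rather than the tree method of Chinburg--Friedman followed in the paper; it buys uniformity (one criterion handles both cases and yields the $\Frob(\rho(\mathcal R,\mathcal E))$ formulation of Theorem \ref{theorem:main} for free), at the cost of not exhibiting the orders and of deferring the genuine technical content to the reduction you flag at the end.

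Two of those deferred points are where the care is actually needed, and both are fillable. First, $\phi(J_L)\,\mathfrak N(\mathcal R)$ is not a subgroup of $J_\A$ (the factors do not normalize one another), so there is no literal double coset space; what your argument uses, and what is true, is the set-theoretic statement that $X_\nu=\{x\in\A_\nu^\times : x\phi(\Omega_\nu)x^{-1}\subseteq\mathcal R_\nu\}$ is a union of double cosets $N(\mathcal R_\nu)\,x\,\phi(L_\nu^\times)$ containing the identity, whence $\widehat b\in\A^\times\phi(J_L)\mathfrak N(\mathcal R)$ implies $g\phi(\Omega)g^{-1}\subseteq\mathcal S$ for some $g\in\A^\times$ (also: write $\phi(\Omega)$, not $\phi(L)$, inside the orders). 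Second, the descent ``norm class admissible $\Rightarrow$ idele class admissible'' is not formal: after Hasse--Schilling--Maass produces $a\in\A^\times$ with $n(a)=k$, one has $\widehat b\in a\,\phi(\widehat\ell)\,J_\A^1\,\mathfrak N(\mathcal R)$, and the element of $\A^1$ furnished by strong approximation sits \emph{between} $\phi(\widehat\ell)$ and $\mathfrak N(\mathcal R)$; to extract a global conjugator one must apply strong approximation to the conjugated open subgroup $J_\A^1\cap\phi(\widehat\ell)\mathfrak N(\mathcal R)\phi(\widehat\ell)^{-1}$, or equivalently exploit the normality of $\A^\times J_\A^1$ in $J_\A$ exactly as in the injectivity step of Theorem \ref{theorem:bijection}. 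With these two points made precise your sufficiency criterion stands, and since the Artin map carries $K^\times N_{L/K}(J_L)n(\mathfrak N(\mathcal R))$ onto $\Gal\bigl(K(\mathcal R)/(K(\mathcal R)\cap L)\bigr)$ while Theorem \ref{theorem:bijection} matches isomorphism classes bijectively with $\Gal(K(\mathcal R)/K)$, the dichotomy $K(\mathcal R)\cap L=K$ versus $K(\mathcal R)\cap L=L$ gives parts (1) and (2) exactly as you state.
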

	
An easy consequence of Theorem \ref{theorem:obsructiontheorem} is that if $\Omega$ embeds into an order $\mathcal R$, then every representative of at least half of the isomorphism classes in the genus of $\mathcal R$ admits an embedding of $\Omega$. Another consequence is that if any prime of $K$ which does not divide the level ideal $N_\mathcal R$ ramifies in $L$ then every order in the genus of $\mathcal R$ admits an embedding of $\Omega$. This is especially nice in applications because in practice the computation of $K(\mathcal R)$ can be quite difficult. In a future publication we will apply Theorem \ref{theorem:obsructiontheorem} in order to construct \textit{isospectral towers} of hyperbolic manifolds; that is, pairs of infinite towers $\{M_i\},\{N_i\}$ of finite covers of hyperbolic manifolds $M$ and $N$ such that the covers $M_j$ and $N_j$ are isospectral, but not isometric for every $j$.

In Section \ref{subsection:aselectivitytheorem} we constrain slightly the class of orders $\mathcal R \subset \A$ that we consider and provide necessary and sufficient conditions for an order $\Omega$ to embed into some, but not all, orders in the genus of $\mathcal R$. The main result of Section \ref{subsection:aselectivitytheorem} is:

\begin{theorem}\label{theorem:introductionmain}
Assume that an embedding of $\Omega$ into $\mathcal{R}$ exists. Assume as well that the relative discriminant ideal $d_{\Omega/\mathcal{O}_K}$ of $\Omega$ is coprime to the level ideal $N_\mathcal R$ of $\mathcal R$ and that 
the set of primes dividing $N_\mathcal R$ is disjoint from the set of primes ramifying in $\A$. Then every order in the genus of $\mathcal{R}$ admits an embedding of $\Omega$ except when the following conditions hold:

\begin{enumerate}

\item $\Omega$ is an integral domain whose quotient field $L$ is a quadratic field extension of $K$ which is contained in $\A$.

\item There is a containment of fields $L\subset K(\mathcal{R})$.

\item All primes of $K$ which divide the relative discriminant ideal $d_{\Omega/\mathcal{O}_K}$ of $\Omega$ split in $L/K$.

\end{enumerate}

Suppose now that (1) - (3) hold. Then the isomorphism classes in the genus of $\mathcal{R}$ whose representatives admit an embedding of $\Omega$ comprise exactly half of the isomorphism classes. If $\mathcal{R}$ admits an embedding of $\Omega$ and $\mathcal E$ is another order in the genus of $\mathcal R$, then $\mathcal E$ admits an embedding of $\Omega$ if and only if $\displaystyle\FrobclassfieldL(\rho(\mathcal R,\mathcal E))$ is the trivial element of $Gal(L/K)$.
\end{theorem}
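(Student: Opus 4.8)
The plan is to reduce, via Theorem~\ref{theorem:obsructiontheorem}, to the single case in which $\Omega$ is a domain and $L\subset K(\mathcal R)$, and then to pin down selectivity through a purely local analysis at the primes dividing $d_{\Omega/\mathcal O_K}$. First I would dispose of the situations in which (1)--(3) fail. If $\Omega$ is not an integral domain then $\Omega\otimes_{\mathcal O_K}K\cong K\times K$; since this contains zero divisors it can embed only when $\A\cong M_2(K)$, and because $\Omega\hookrightarrow\mathcal R$ is assumed, a direct argument (vanishing of the relevant local obstruction for the split torus) shows every order in the genus admits an embedding. If $\Omega$ is a domain but $L=\Omega\otimes_{\mathcal O_K}K$ is not contained in $K(\mathcal R)$, part~(1) of Theorem~\ref{theorem:obsructiontheorem} gives the same conclusion. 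Hence I may assume (1) and (2) hold; part~(2) of Theorem~\ref{theorem:obsructiontheorem} then shows that the classes admitting an embedding of $\Omega$ account for at least half of the genus, and, following the idelic description behind that theorem, form a $G_0$-invariant set containing $G_0=\Gal(K(\mathcal R)/L)=\ker\!\big(\Gal(K(\mathcal R)/K)\to\Gal(L/K)\big)$, the map being restriction and having index two since $L\subset K(\mathcal R)$. The embedding set is therefore either $G_0$ (selectivity, exactly half) or all of $\Gal(K(\mathcal R)/K)$ (no selectivity), and the whole problem is to decide which.

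To decide, I would ask whether any isomorphism class lying over the nontrivial element of $\Gal(L/K)$ admits an embedding, i.e.\ whether a compatible family of local embeddings can be assembled after twisting by an idele of nontrivial image in $\Gal(L/K)$. The coprimality hypotheses organize the primes so that only finitely many can matter: at $\nu\mid\disc\A$ the algebra $\A_\nu$ is a division algebra with unique maximal order (and $\nu\nmid N_{\mathcal R}$); at $\nu\mid N_{\mathcal R}$ one has $\A_\nu\cong M_2(K_\nu)$ with $\mathcal R_\nu$ an Eichler order and, by coprimality of $d_{\Omega/\mathcal O_K}$ with $N_{\mathcal R}$, $\Omega_\nu$ maximal and $L_\nu/K_\nu$ unramified; and at all remaining primes $\mathcal R_\nu$ is maximal. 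At each prime I would record, in the local class group $K_\nu^\times/N_{L_\nu/K_\nu}(L_\nu^\times)\cong\Gal(L_\nu/K_\nu)$, the image of the reduced norms of the elements conjugating the local optimal embeddings. When $\Omega_\nu$ is maximal and $L_\nu/K_\nu$ is unramified---which covers every prime dividing $N_{\mathcal R}$ as well as the ramified primes of $\A$---this image lies in the norm group, hence is trivial in $\Gal(L_\nu/K_\nu)$, so such primes never contribute to selectivity.

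The decisive primes are thus exactly those dividing $d_{\Omega/\mathcal O_K}$, and the main obstacle is to establish the dichotomy there. If such a prime $\nu$ splits in $L$ then $\Gal(L_\nu/K_\nu)$ is trivial and the local contribution vanishes; but if $\nu$ is inert or ramified in $L$, the presence of the non-maximal order $\Omega_\nu$ (or of ramification in $L_\nu$) produces optimal embeddings whose conjugating elements realize both classes of $\Gal(L_\nu/K_\nu)$, so the local image is all of $\Gal(L_\nu/K_\nu)$. I expect this local computation---tracking how the optimal embeddings of a non-maximal quadratic order distribute among the local orders, and verifying surjectivity of the reduced-norm image in the non-split case---to be the technical heart of the argument. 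Granting it, if some prime dividing $d_{\Omega/\mathcal O_K}$ fails to split in $L$ (so (3) fails) the local images generate all of $\Gal(L/K)$, the nontrivial coset is reached, and every order in the genus admits an embedding; whereas if all primes dividing $d_{\Omega/\mathcal O_K}$ split in $L$ (so (3) holds) every local contribution is trivial, the embedding set is confined to $G_0$, and exactly half the classes admit an embedding.

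Finally I would translate the statement ``embedding set $=G_0$'' into the Frobenius criterion. Since $\mathcal R$ admits an embedding it represents the trivial class, so, normalizing the identification of the genus with $\Gal(K(\mathcal R)/K)$ at $\mathcal R$, an order $\mathcal E$ corresponds to the automorphism attached to the distance ideal $\rho(\mathcal R,\mathcal E)$ by the reciprocity map defining $K(\mathcal R)$. The subgroup $G_0$ is precisely the kernel of restriction to $L$, and this restriction composed with the reciprocity map is the Artin map $\Frob$ of $L/K$ evaluated on $\rho(\mathcal R,\mathcal E)$. Hence $\mathcal E$ admits an embedding of $\Omega$ if and only if $\Frob(\rho(\mathcal R,\mathcal E))$ is trivial in $\Gal(L/K)$, which is the asserted criterion and completes the proof.
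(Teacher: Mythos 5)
Your overall framework---deciding selectivity by the images of conjugator reduced norms in the local groups $K_\nu^\times/N_{L_\nu/K_\nu}(L_\nu^\times)$, in the spirit of Chan--Xu---is reasonable, but as written it contains one step that is false and defers the step that is the actual content of the theorem. The false step: you claim that at a finite prime $\nu$ ramifying in $\A$ the local image ``lies in the norm group, hence is trivial,'' so such primes ``never contribute to selectivity.'' The opposite is true. At such a prime $\A_\nu$ is a division algebra whose unique maximal order is preserved by conjugation by \emph{every} element of $\A_\nu^\times$, so the conjugator set is all of $\A_\nu^\times$ and its reduced-norm image is all of $K_\nu^\times$; since $L_\nu$ is a field there (Albert--Brauer--Hasse--Noether, as $L\subset\A$), this image surjects onto $\Gal(L_\nu/K_\nu)$, and such a prime \emph{destroys} selectivity. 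Your final conclusion survives only because under conditions (1), (2) and the standing hypotheses no finite prime of $K$ can ramify in $\A$ at all --- this is the paper's Proposition \ref{propositon:1and2hold}: if $\nu$ ramified in $\A$, then $\mathcal R_\nu$ would be maximal by \eqref{second}, so $n(N(\mathcal R_\nu))=K_\nu^\times$, forcing $\nu$ to split completely in $K(\mathcal R)$ and hence in $L\subset K(\mathcal R)$, contradicting Albert--Brauer--Hasse--Noether. You neither state nor prove this, and your case analysis, taken literally, would ``prove'' selectivity in a configuration where it fails. (The same proposition also shows $L/K$ has no finite ramification, so the ramified-in-$L$ primes you treat as live decisive cases are vacuous; and your assertion that $\mathcal R_\nu$ is an Eichler order at $\nu\mid N_{\mathcal R}$ is not a hypothesis of the theorem and does not follow from the ones given.)

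The deferred step is the technical heart you explicitly ``grant'': that a prime $\nu\mid d_{\Omega/\mathcal O_K}$ inert in $L$ has full local image, and (needed for the converse, though you only assert it) that an inert prime with $\Omega_\nu$ maximal has trivial local image. These two local computations are precisely what the paper proves, and the theorem turns on them. For the first, Proposition \ref{proposition:case3} uses the conductor: $\Omega_\nu\subset\mathcal O_{K_\nu}+\pi_\nu\mathcal O_{L_\nu}$, and since $\pi_\nu\mathcal D_\nu\subset\mathcal D_\nu'$ for adjacent maximal orders, $\Omega_\nu$ lies in two \emph{adjacent} maximal orders; the element conjugating one to the other has reduced norm of odd valuation, a non-norm for the unramified extension $L_\nu/K_\nu$. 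For the second, Lemma \ref{lemma:uniquevertex} shows $\mathcal O_{L_\nu}$ lies in a \emph{unique} maximal order (a generator of the unramified quadratic order fixes no edge of the tree, by Chinburg--Friedman's Lemma 2.2), whence every conjugator normalizes that vertex and has reduced norm of even valuation; this is what makes the non-split classes genuinely unreachable when (3) holds. Note also that this theorem concerns embeddings, not optimal embeddings, so the relevant local sets are $\{y\in\A_\nu^\times : y\Omega_\nu y^{-1}\subset\mathcal R_\nu\}$. Without the non-existence of finite ramification and without these two tree computations your outline decides nothing; with them supplied, it becomes essentially the paper's proof (Propositions \ref{propositon:1and2hold}, \ref{proposition:case3}, \ref{proposition:case4} and Lemma \ref{lemma:uniquevertex}) rewritten in idelic language in place of the explicit parameterization $\{\mathcal D^\gamma\}$.
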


The proof of Theorem \ref{theorem:introductionmain} makes extensive use of the tree of maximal orders of $M_2(k)$ (for $k$ a local field), allowing us to parameterize explicitly the orders in the genus of $\mathcal R$ admitting an embedding of $\Omega$, explicit enough to specify them via the local-global correspondence.

In Section \ref{section:optimal} we consider the related question of when there exists an \textit{optimal embedding} of $\Omega$ into $\mathcal R$. Maclachlan \cite{mac} considered Eichler orders of square-free level and proved that the proportion of isomorphism classes of orders in the genus of $\mathcal R$ admitting an optimal embedding of $\Omega$ is equal to $0,\frac{1}{2}$ or $1$. We show that Theorems \ref{theorem:obsructiontheorem} and \ref{theorem:introductionmain} hold not only for embeddings but for optimal embeddings as well. These theorems are of independent interest however, in part because of the ubiquity of optimal embeddings in number theory. For example, optimal embeddings play an important role in Hijikata's \cite{hijikata} formula for the trace of Hecke operators acting on spaces of $\mathcal R^\times$-automorphic cusp forms.

It is a pleasure to thank Tom Shemanske, my advisor, for his encouragement and
for patiently reading and commenting on drafts of this paper. I would also like to thank
Ted Chinburg for useful conversations regarding the proof of Theorem 3.3 of \cite{chinburg}.


\section{Notation}\label{section:notation}
In this section we fix the notation concerning quaternion algebras and their orders 
that will be used throughout this paper.

Let $K$ be a number field with ring of integers $\mathcal{O}_K$. Let $\A $ be a 
quaternion algebra over $K$ with reduced norm $n$. We denote by 
$K_\nu$ the completion of $K$ at a prime $\nu$ of $K$. If $\nu$ 
is a non-archimedean prime, we let $\mathcal{O}_{K_\nu}$ be the valuation 
ring of $K_\nu$ and $\pi_\nu$ a fixed uniformizer. When there will be no confusion we will 
write $\mathcal{O}_\nu$ in place of $\mathcal{O}_{K_\nu}$. We denote by $\A_\nu$ 
the $K_\nu$-algebra $\A\otimes_K K_\nu$ and by $\A^1$ (resp. $\A_\nu^1$) the elements of $\A$ (resp. $\A_\nu$) having reduced norm equal to $1$.  We let $J_K$ denote the idele group of
$K$ and $J_\A$ the idele group of $\A$.

Given an $\mathcal O_K$-order $\mathcal{R}\subset\A$ (having maximal rank) and a prime $\nu$ of $K$, we define the 
completions $\mathcal{R}_\nu\subset \A_\nu$ by:

\begin{displaymath}
\mathcal{R}_\nu = \left\{ \begin{array}{ll}
\mathcal{R}\otimes_{\mathcal{O}_K} \mathcal{O}_{\nu} & \textrm{if $\nu$ is non-archimedean,}\\
\mathcal{R}\otimes_{\mathcal{O}_K} K_\nu=\A_\nu & \textrm{if $\nu$ is archimedean.}\\
\end{array} \right.
\end{displaymath}

For almost all finite primes $\nu$ of $K$, $\A_\nu\cong \displaystyle\m2(K_\nu)$ and 
$\mathcal{R}_\nu\cong \displaystyle\m2(\mathcal{O}_\nu)$, so we will identify $\mathfrak A_\nu$ with $\displaystyle\m2(K_\nu)$. Define the normalizer of $\mathcal R_\nu$ in $\A_\nu^\times$ to be $N(\mathcal R_\nu)=\{ x\in\mathfrak A_\nu^\times : x \mathcal R_\nu x^{-1}=\mathcal R_\nu \}$. This normalizer, along with its image under the reduced norm, is central to determining whether the class field associated to $\mathcal R$ admits finite ramification. We therefore note that whenever $\nu$ is a finite prime of $K$ which is unramified in $\mathfrak A$ and $\mathcal R_\nu$ is a maximal order of $\mathfrak A_\nu$, its normalizer is conjugate to 
$\displaystyle\gl2(\mathcal{O}_\nu)K_\nu^\times$ and hence,
$n(N(\mathcal{R}_\nu))=\mathcal{O}_\nu^\times {K_\nu^\times}^2$.

We define the \textit{level ideal} $N_\mathcal R$ of $\mathcal R$ to be the order-ideal \cite[p. 49]{reiner} of the $\mathcal O_K$-module $\mathcal M/\mathcal R$, where $\mathcal M$ is any maximal order of $\A$ containing $\mathcal R$. This definition is independent of the choice of maximal order $\mathcal M$.


\section{The class field associated to an order}\label{section:cftsection}
Let $\A$ be a quaternion algebra over a number field $K$ satisfying the Eichler condition; that is, there 
exists an archimedean prime of $K$ which splits in $\A$. The reason for this assumption, as will soon be made clear, is that it is only in this context that the Strong Approximation theorem may be applied.

We say that two orders $\mathcal{R}_1$ and $\mathcal{R}_2$ are of the same \emph{genus} if ${\mathcal{R}_1}_\nu$ is 
$\mathcal{O}_\nu$-isomorphic to ${\mathcal{R}_2}_\nu$ for all finite primes $\nu$ of $K$. The \textit{type number} $t(\mathcal{R})$ of an order 
$\mathcal{R}$ is defined to be the number of isomorphism classes of orders in the genus of $\mathcal{R}$.

\begin{example}
Let $\mathcal M\subset \mathfrak A$ be a maximal order (of full rank). For every finite prime $\nu$ of $K$, $\mathcal M_\nu$ is a maximal order of $\mathfrak A_\nu$. If $\nu$ is a finite prime splitting in $\mathfrak A$, then it is well known that every maximal order of $\mathfrak A_\nu$ is conjugate to $\mathcal M_\nu$. If $\nu$ is a finite prime ramifying in $\mathfrak A$, then $\mathcal M_\nu$ is the unique maximal order of $\mathfrak A_\nu$. It follows that the collection of all maximal orders of $\mathfrak A$ comprise the genus of $\mathcal M$.
\end{example}

\begin{example}
Let $\mathcal E \subset \mathfrak A$ be an Eichler order of level $\mathfrak N$ (where $\mathfrak N$ is coprime to the discriminant of $\mathfrak A$). Recall that this means that $\mathcal E$ is an order of $\mathfrak A$ (of full rank) with the property that $\mathcal E_\nu$ is maximal if $\nu$ ramifies in $\mathfrak A$, and if $\nu$ splits in $\A$ then $\mathcal E_\nu$ is conjugate to \begin{displaymath}
	\begin{pmatrix}
	  \mathcal O_\nu &   \mathcal O_\nu\\
	 \mathfrak N  \mathcal O_\nu &    \mathcal O_\nu
	\end{pmatrix}.
\end{displaymath} It follows that the genus of $\mathcal E$ is the collection of all Eichler orders of level $\mathfrak N$.
\end{example}

Let $\mathcal{R}\subset\A$ be an order (of maximal rank). The isomorphism classes of orders in the 
genus of $\mathcal{R}$ are in one-to-one correspondence with points in the double coset space $\A^\times \backslash J_\A / \mathfrak{N}(\mathcal{R})$, where 
$\mathfrak{N}(\mathcal{R})=J_\A \cap \prod_{\nu}N(\mathcal{R}_\nu)$. This bijection is induced by the map sending an order $\mathcal E$ belonging to the genus of $\mathcal R$ to the double coset $\A^\times \tilde{x}_\mathcal E \mathfrak N(\mathcal R)$, where $\tilde{x}_\mathcal E=(x_{\mathcal E_\nu})$ is an element of $J_\A$ such that $x_{\mathcal E_\nu}\mathcal E_\nu x_{\mathcal E_\nu}^{-1}=\mathcal R_\nu$ for all $\nu$.

\begin{theorem}\label{theorem:bijection}
The reduced norm induces a bijection of sets 
\begin{displaymath}
n: \A^\times \backslash J_\A/\mathfrak{N}(\mathcal{R})\longrightarrow K^\times \backslash J_K / n(\mathfrak{N}(\mathcal{R})).
\end{displaymath}
\end{theorem}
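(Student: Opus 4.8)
The plan is to show the induced map is well defined, surjective, and injective, with the Eichler condition entering only in the final (and hardest) injectivity step via Strong Approximation. Since the reduced norm $n\colon J_\A\to J_K$ is a group homomorphism carrying $\A^\times$ into $K^\times$ and $\mathfrak N(\mathcal R)$ into $n(\mathfrak N(\mathcal R))$, it descends to a well-defined map $\A^\times a\,\mathfrak N(\mathcal R)\mapsto K^\times n(a)\,n(\mathfrak N(\mathcal R))$ on double cosets, and I would open with this observation.

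For surjectivity I would first record the local behaviour of the reduced norm: at every finite prime $n\colon \A_\nu^\times\to K_\nu^\times$ is onto (it is $\det$ when $\A_\nu$ splits, and is surjective on a local division algebra), and at the archimedean primes it is onto except at the real places ramified in $\A$, where its image is $\R_{>0}$. Patching local preimages together (choosing elements of $\gl2(\mathcal O_\nu)$ at the cofinitely many places where the target coordinate is a unit) shows that $n(J_\A)$ is exactly the group of ideles positive at every real place ramified in $\A$. Weak approximation then lets me choose an element of $K^\times$ with prescribed signs at that finite set of places, so that $K^\times\cdot n(J_\A)=J_K$; this is precisely the assertion that every class in $K^\times\backslash J_K/n(\mathfrak N(\mathcal R))$ is met.

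Injectivity is the crux. Suppose $n(a')\in K^\times n(a)\,n(\mathfrak N(\mathcal R))$. After replacing $a'$ by $a'w^{-1}$ for a suitable $w\in\mathfrak N(\mathcal R)$ I may assume $n(a')=c\,n(a)$ with $c\in K^\times$; since $n(a'),n(a)\in n(J_\A)$, the idele $c$ is positive at every real place ramified in $\A$, so the Hasse--Schilling norm theorem (the statement that $n(\A^\times)$ is exactly the set of such elements of $K^\times$) produces $\gamma\in\A^\times$ with $n(\gamma)=c$. Replacing $a'$ by $\gamma^{-1}a'$ reduces me to the case $n(a')=n(a)$, i.e. $g:=a'a^{-1}\in J_\A^1$. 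I would then conjugate: $U:=\bigl(a\,\mathfrak N(\mathcal R)\,a^{-1}\bigr)\cap J_\A^1$ is an open subgroup of $J_\A^1$ whose component at an archimedean split place $\nu_0$ (which exists by the Eichler condition) is all of $\A_{\nu_0}^1$, because $N(\mathcal R_{\nu_0})=\A_{\nu_0}^\times$ there.

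The key step is the appeal to Strong Approximation: since $\A$ satisfies the Eichler condition, $\A^1\,\A_{\nu_0}^1$ is dense in $J_\A^1$, while $\A^1\cdot U$, being a union of cosets of the open subgroup $U$, is both open and closed; containing the dense set it must equal $J_\A^1$. Writing $g=\beta v$ with $\beta\in\A^1$ and $v=aua^{-1}\in U$ (so $u\in\mathfrak N(\mathcal R)$) yields $a'=ga=\beta a u\in\A^\times a\,\mathfrak N(\mathcal R)$, which is exactly the assertion that $a$ and $a'$ define the same double coset. I expect the main obstacle to be the bookkeeping here: correctly transporting $\mathfrak N(\mathcal R)$ by the conjugation $a(\cdot)a^{-1}$ and verifying that the resulting subgroup of $J_\A^1$ is open with full component at $\nu_0$, so that Strong Approximation applies verbatim.
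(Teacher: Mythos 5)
Your proof is correct and follows essentially the same route as the paper: surjectivity via local surjectivity of the reduced norm together with weak approximation, and injectivity via the Hasse--Schilling--Maass norm theorem followed by Strong Approximation applied to the open subgroup obtained by conjugating $\mathfrak N(\mathcal R)$. The only difference is organizational --- the paper first characterizes the preimage of the trivial class as $\A^\times J_\A^1 \mathfrak N(\mathcal R)$ and invokes normality of $\A^\times J_\A^1$, whereas you reduce directly to $n(a')=n(a)$ by successive replacements --- but the key ingredients and their roles are identical.
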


\begin{proof} 
	
	The induced map is defined in the obvious way: $n(\mathfrak A^\times \tilde{\alpha} \mathfrak N(\mathcal R))=K^\times n(\tilde{\alpha}) n(\mathfrak N(\mathcal R))$.

We first show that $n$ is surjective. Let $\tilde{\beta}\in J_K$. We may assume, by the weak approximation theorem, that $\beta_\nu>0$ for every infinite prime $\nu$ ramifying in $\mathfrak A$. The reduced norm is locally surjective at all finite primes and at the infinite primes not ramifying in $\mathfrak A$. At the infinite primes ramifying in $\mathfrak A$ the image of the reduced norm consists of the non-negative reals. We construct an idele $\tilde{\alpha}\in J_\mathfrak A$ such that $n(\mathfrak A^\times \tilde{\alpha} \mathfrak N(\mathcal R))=K^\times \tilde{\beta} n(\mathfrak N(\mathcal R))$. For all but finitely many non-archimedean primes $\nu$ of $K$, $\beta_\nu\in \mathcal{O}_\nu^\times$ and $\mathcal R_\nu\cong M_2(\mathcal{O}_\nu)$. For each such prime $\nu$, let $\alpha_\nu$ be the conjugate of the diagonal matrix $\mbox{diag}(\beta_\nu,1)$ lying in $\mathcal{R}_\nu$. For the other primes, define $\alpha_\nu$ to be any element in the preimage of $\beta_\nu$. The constructed element $\tilde{\alpha}=(\alpha_\nu)_\nu$ is easily seen to lie in $J_\mathfrak A$ and establishes surjectivity.

We now show that $n$ is injective. Denote by $J_\mathfrak A^1$ the kernel of the reduced norm map $n: J_\mathfrak A\rightarrow J_K$. We show that $\mathfrak A^\times J_\mathfrak A^1 \mathfrak {N}(\mathcal R)$ is the preimage of $K^\times n(\mathfrak N(\mathcal R))$. It is obvious that $\mathfrak A^\times J_\mathfrak A^1 \mathfrak {N}(\mathcal R)$ is contained in the preimage. Let $\tilde{\gamma}\in J_\mathfrak A$ be such that $n(\mathfrak A^\times \tilde{\gamma} \mathfrak N(\mathcal R))=K^\times n(\mathfrak N(\mathcal R))$. Then $n(\tilde{\gamma})=k\cdot n(\tilde{r})$ where $k\in K^\times$ and $\tilde{r}\in \mathfrak N (\mathcal R)$. If $\nu$ is an infinite prime ramifying in $\mathfrak A$ then $n(\gamma_\nu), n(r_\nu)>0$. Thus $k_\nu>0$ as well. The Hasse-Schilling-Maass theorem (Theorem 33.15 of \cite{reiner}) implies that there exists $b\in\mathfrak A^\times$ such that $n(b)=k$ and $n(\tilde{\gamma})=n(b)n(\tilde{r})$, hence $n(b^{-1})n(\tilde{\gamma})n(\tilde{r}^{-1})=(1)\in J_K$. This shows that $\mathfrak A^\times \tilde{\gamma}\mathfrak N(\mathcal R)=\mathfrak A^\times b^{-1}\tilde{\gamma}\tilde{r}^{-1}\mathfrak{N}(\mathcal R)\in \A^\times J_\mathfrak A^1\mathfrak N(\mathcal R)$ as claimed.

We now continue with our proof of injectivity. Suppose that there exist $\tilde{\alpha},\tilde{\beta}\in J_\mathfrak A$ such that $n(\mathfrak A^\times\tilde{\alpha}\mathfrak N{(\mathcal R)})=n(\mathfrak A^\times\tilde{\beta}\mathfrak N{(\mathcal R)})$. Then $n(\tilde{\alpha}^{-1}\tilde{\beta})\in K^\times n(\mathfrak N(\mathcal R))$ and by the above claim $\tilde{\alpha}^{-1}\tilde{\beta}\in\mathfrak A^\times J_\mathfrak A^1 \mathfrak N(\mathcal R)$. Making use of the fact that $\mathfrak A^\times J_\mathfrak A^1$ is a normal subgroup of $J_\mathfrak A$, we see that $\tilde{\beta}\in\tilde{\alpha}\mathfrak A^\times J_\mathfrak A^1\mathfrak N(\mathcal R)=\mathfrak A^\times J_\mathfrak A^1\tilde{\alpha}\mathfrak N(\mathcal R)$. 

Let $S$ be the set of archimedean places of $K$ and define $\A_S^1=\prod_{\nu\in S} \A_\nu^1$. Recall the Strong Approximation theorem (\cite[Theorem 4.3]{vigneras}): If $\A^1_S$ is not compact, then $\A^1 \A^1_S$ is dense in $J_\A^1$. Recalling that $\A^1_\nu$ is compact if and only if $\nu$ ramifies in $\A$, our assumption that $\A$ satisfy the Eichler condition implies that there exists an archimedean prime $\nu$ of $K$ such that $\A^1_\nu$ is not compact. It follows that $\A^1_S$ is not compact. For any $\tilde{\gamma}\in J_\A$, $\tilde{\gamma}\mathfrak N(\mathcal R)\tilde{\gamma}^{-1}$ contains a neighborhood of the identity and is therefore an open subgroup of $J_\A$ containing $\A^1_S$, hence $J_\A^1\subset \A^\times \tilde{\gamma}\mathfrak N(\mathcal R)\tilde{\gamma}^{-1}$.
 Choosing $\tilde{\gamma}=\tilde{\alpha}$, we have $$\tilde{\beta}\in \mathfrak A^\times J_\mathfrak A^1 \tilde{\alpha}\mathfrak N(\mathcal R)\subset \mathfrak A^\times \tilde{\alpha} \mathfrak N(\mathcal R).$$
Therefore $\mathfrak A^\times \tilde{\beta} \mathfrak N(\mathcal R)\subset \mathfrak A^\times \tilde{\alpha} \mathfrak N(\mathcal R)$, and by symmetry, we have equality.\end{proof}

We have shown that the isomorphism classes comprising the genus of a fixed 
order $\mathcal{R}\subset\A$ are in one-to-one correspondence
with the double cosets of the group $K^\times \backslash J_K / n(\mathfrak{N}(\mathcal{R}))$. 
Set $H_\mathcal{R}=K^\times n(\mathfrak{N}(\mathcal{R}))$ 
and $G_\mathcal{R}=J_K/H_\mathcal{R}$. As $J_K$ is abelian, $G_\mathcal{R} \cong K^\times \backslash J_K / n(\mathfrak{N}(\mathcal{R}))$. Since $H_\mathcal{R}$ is an open subgroup of $J_K$, associated to it is a class field $K(\mathcal{R})$ 
whose arithmetic is intimately related to the arithmetic of $\mathcal{R}$ in $\A$. The basic properties of $K(\mathcal{R})$ are given by the standard theorems of class field theory (see for example Chapter 11 of \cite{lang}). In particular, we note that the conductor of $K(\mathcal R)$ is divisible only by prime divisors of the level ideal $N_\mathcal R$ of $\mathcal R$.

We now prove that $G_\mathcal{R}$ can be generated by elements
having a very simple form and that in fact, $G_\mathcal R$ is an elementary abelian 
group of exponent $2$.

\begin{lemma}\label{lemma:ggen}
$G_\mathcal{R}$ is generated by cosets having representatives of the form \\ $e_{\nu_i}=(1,...,1,\pi_{\nu_i},1,...)$. If $S$ is any finite set of primes of $K$, then the representatives $\{e_{\nu_i}\}$ can be taken so that $\nu_i\notin S$ 
for all $i$.
\end{lemma}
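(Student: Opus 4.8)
The plan is to exploit the decomposition of $J_K$ into unit ideles and ideles supported at a single finite prime, and to show that the unit part contributes nothing to $G_\mathcal R$ beyond what the $e_\nu$ already give. Write $V=\prod_{\nu\mid\infty}K_\nu^\times\times\prod_{\nu\nmid\infty}\mathcal O_\nu^\times$ for the group of unit ideles. For $\tilde a=(a_\nu)\in J_K$ the product $\prod_\nu e_\nu^{\ord(a_\nu)}$ is finite, since $\ord(a_\nu)=0$ for almost all $\nu$, and $\tilde a$ differs from it by an element of $V$. Hence $J_K=V\cdot\langle e_\nu:\nu\nmid\infty\rangle$, so it suffices to prove that the image of $V$ in $G_\mathcal R$ lies in the subgroup generated by the classes $\bar e_\nu$.

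First I would identify the image of $V$ modulo $n(\mathfrak N(\mathcal R))$. Using that $n(\mathfrak N(\mathcal R))=\{(y_\nu)\in J_K:y_\nu\in n(N(\mathcal R_\nu))\ \text{for all }\nu\}$ (which follows from local surjectivity of the reduced norm on normalizers together with $n(\mathcal R_\nu^\times)=\mathcal O_\nu^\times$ for almost all $\nu$), the finite unit components die: at every finite $\nu$ one has $\mathcal O_\nu^\times\subseteq n(N(\mathcal R_\nu))$, with equality to $\mathcal O_\nu^\times(K_\nu^\times)^2$ at the good primes recorded in Section \ref{section:notation}. At an archimedean place splitting in $\A$ (any complex place, or a real place with $\A_\nu\cong\m2(\R)$) the reduced norm is onto $K_\nu^\times$, so those components die as well. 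The only surviving contribution is the sign at the real places ramifying in $\A$, where $n(N(\mathcal R_\nu))=\R_{>0}$; thus the image of $V$ in $G_\mathcal R$ is generated by the classes of the sign ideles carrying $-1$ at a single real ramified place and $1$ elsewhere.

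To absorb these signs I would pass to global elements. For $k\in K^\times$ the diagonal idele $\iota(k)$ is trivial in $G_\mathcal R$, since $K^\times\subseteq H_\mathcal R$. Decomposing $\iota(k)$ through the previous paragraph yields, modulo $H_\mathcal R$, the relation $\tilde s(k)\equiv\prod_\nu e_\nu^{-\ord(k)}$, where $\tilde s(k)$ records the signs of $k$ at the real ramified places. By weak approximation $K^\times$ is dense in $\prod_{\nu\mid\infty}K_\nu^\times$, so every sign pattern at the real ramified places is realized by some $k$; hence every sign idele is congruent modulo $H_\mathcal R$ to a product of $e_\nu$'s. Combined with $J_K=V\langle e_\nu\rangle$, this proves that the $\bar e_\nu$ generate $G_\mathcal R$.

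For the refinement that the generators may be chosen outside a prescribed finite set $S$, it suffices to show that each $\bar e_{\nu_0}$ with $\nu_0\in S$ lies in $\langle\bar e_\nu:\nu\notin S\rangle$; the sign contributions are then rewritten outside $S$ by the same device. Let $\mathfrak p_0$ be the prime of $\nu_0$. Since every narrow ideal class contains infinitely many primes (Dirichlet/Chebotarev), I would choose a prime $\mathfrak q\notin S$ in the narrow class of $\mathfrak p_0$, so that $\mathfrak p_0\mathfrak q^{-1}=(k)$ for a totally positive $k$; as $\tilde s(k)=1$, the relation above collapses to $\bar e_{\nu_0}=\bar e_{\mathfrak q}$ with $\mathfrak q\notin S$. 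I expect the main obstacle to be the archimedean bookkeeping of the middle step — verifying that the only non-unit obstruction in $V$ is the sign at real ramified places and that it is genuinely killed by a global element — together with the finite-prime inclusions $\mathcal O_\nu^\times\subseteq n(N(\mathcal R_\nu))$ at the bad primes dividing $N_\mathcal R$ or ramifying in $\A$, which form the technical heart; the density of primes in ideal classes then makes the avoidance of $S$ routine.
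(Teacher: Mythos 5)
Your argument collapses at the step you yourself call the technical heart: the claim that $\mathcal{O}_\nu^\times\subseteq n(N(\mathcal{R}_\nu))$ at \emph{every} finite prime $\nu$. The paper (Section 2) records this containment only when $\mathcal{R}_\nu$ is maximal and $\nu$ is unramified in $\A$; the lemma, however, concerns an arbitrary order $\mathcal{R}$ of full rank, and at primes dividing the level ideal $N_\mathcal{R}$ the containment can genuinely fail. Concretely, take $p\equiv 1\pmod 4$ and let $\mathcal{R}_\nu=\Z_p+\Z_p i+\Z_p j+\Z_p ij\subset M_2(\Q_p)$ with $i^2=p$, $j^2=p^2$, $ji=-ij$. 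Conjugation by any $x\in N(\mathcal{R}_\nu)$ preserves the ternary $\Z_p$-lattice $\mathcal{R}_\nu^0$ of trace-zero elements, equipped with the reduced norm form $\langle -p,-p^2,p^3\rangle$, and the spinor norm of the resulting rotation is $n(x)$ modulo squares. Since $p$ is odd, the orthogonal group of this lattice is generated by symmetries of the lattice, and checking which vectors give lattice symmetries shows their spinor norms all lie in $\{1,-1,-p\}(\Q_p^\times)^2$; as $-1$ is a square, every element of $N(\mathcal{R}_\nu)$ therefore has reduced norm in $\{1,p\}(\Q_p^\times)^2$, which meets $\Z_p^\times$ only in $(\Z_p^\times)^2$. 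So the finite unit components of $V$ do not ``die,'' and your reduction of the problem to archimedean signs fails; the same false containment is also used in your sign relation $\tilde s(k)\equiv\prod_\nu e_\nu^{-\ord(k)}$ and in the final avoidance-of-$S$ step, so the whole proof is affected. This is not a fixable bookkeeping issue: the existence of such orders is exactly why $K(\mathcal{R})$ can be ramified at finite primes, which is why the paper says only that the conductor is supported on the divisors of $N_\mathcal{R}$. Your claim, if true, would force $K(\mathcal{R})/K$ to be unramified at every finite prime.

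The lemma is nevertheless true because the (generally nontrivial) classes of unit ideles at such bad primes are still represented by prime ideles --- but this cannot be seen locally; it is a global equidistribution fact. The paper gets it from the Chebotarev density theorem applied to $K(\mathcal{R})/K$: every element of $\Gal(K(\mathcal{R})/K)\cong G_\mathcal{R}$ is the Frobenius of infinitely many primes unramified in $K(\mathcal{R})$, and under the idelic Artin map such a prime yields exactly the class $\overline{e_\nu}$; since each class contains infinitely many primes, the generators can be chosen outside any finite set $S$. Your closing narrow-class-group argument is precisely this kind of input, but applied to the wrong field (the narrow Hilbert class field rather than the possibly ramified field $K(\mathcal{R})$); once you invoke Chebotarev for $K(\mathcal{R})$ itself, the entire absorption of $V$ becomes unnecessary.
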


\begin{proof}
The Chebotarev Density Theorem guarantees that every element of 
$\displaystyle\Gal(K(\mathcal{R})/K)$ has infinitely many prime ideals in 
its preimage under the Artin map. As these prime ideals
correspond to ideles of the form $(1,1,...,1,\pi_{\nu_i},1,...)$, 
$G_\mathcal R$ can be generated by cosets having representatives of the form $e_{\nu_i}$. 
Only finitely many primes of $K$ 
lie in $S$ and a given element of $G_\mathcal{R}$ has infinitely many prime ideals in 
its preimage, so each $\nu_i$ can be chosen so that $\nu_i\notin S$.
\end{proof}

\begin{proposition}\label{prop:twogrp}
The group $G_\mathcal R$ is an 
elementary abelian group of exponent $2$.
\end{proposition}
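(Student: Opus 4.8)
The plan is to reduce the statement to a purely local computation of the reduced norm of the normalizer. Commutativity is free: $J_K$ is abelian and $H_{\mathcal{R}} = K^\times n(\mathfrak{N}(\mathcal{R}))$ is a subgroup, so the quotient $G_{\mathcal{R}} = J_K/H_{\mathcal{R}}$ is abelian. To control the exponent, I would invoke Lemma \ref{lemma:ggen}, which tells us that $G_{\mathcal{R}}$ is generated by the cosets of ideles of the form $e_{\nu_i} = (1,\dots,1,\pi_{\nu_i},1,\dots)$. It therefore suffices to show that $e_{\nu_i}^2 \in H_{\mathcal{R}}$ for a suitable choice of these generators, since then every generator, and hence every element of the abelian group $G_{\mathcal{R}}$, has order dividing $2$.

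The key step is to exploit the freedom provided by the second assertion of Lemma \ref{lemma:ggen}. Let $S$ be the finite set consisting of all primes dividing the level ideal $N_{\mathcal{R}}$ together with all primes of $K$ that ramify in $\A$, and choose the generating representatives $e_{\nu_i}$ so that each $\nu_i \notin S$. For such a prime, $\mathcal{R}_{\nu_i}$ is a maximal order of $\A_{\nu_i}$ (because $\nu_i \nmid N_{\mathcal{R}}$) and $\nu_i$ is unramified in $\A$, so the computation recorded in Section \ref{section:notation} applies and yields
\begin{displaymath}
n(N(\mathcal{R}_{\nu_i})) = \mathcal{O}_{\nu_i}^\times (K_{\nu_i}^\times)^2 .
\end{displaymath}
Now I would examine $e_{\nu_i}^2 = (1,\dots,1,\pi_{\nu_i}^2,1,\dots)$ place by place: at $\nu_i$ its component is $\pi_{\nu_i}^2 \in (K_{\nu_i}^\times)^2 \subset n(N(\mathcal{R}_{\nu_i}))$, and at every other place its component is $1 \in n(N(\mathcal{R}_\mu))$. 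Hence $e_{\nu_i}^2 \in n(\mathfrak{N}(\mathcal{R})) \subset H_{\mathcal{R}}$, so the coset of $e_{\nu_i}$ has order dividing $2$ in $G_{\mathcal{R}}$, and we conclude that $G_{\mathcal{R}}$ is elementary abelian of exponent $2$.

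The substantive point — really the only place where anything could go wrong — is the insistence that the generators be chosen away from the bad primes in $S$. Away from $S$ the local image $n(N(\mathcal{R}_\nu))$ contains all of $(K_\nu^\times)^2$, which is exactly what forces $\pi_{\nu_i}^2$ to be a local reduced norm from the normalizer; at a prime dividing $N_{\mathcal{R}}$ or ramifying in $\A$ this need not hold, so the argument would break down without Lemma \ref{lemma:ggen}. A minor bookkeeping point worth checking is that $n(\mathfrak{N}(\mathcal{R}))$ is the restricted product of the local images $n(N(\mathcal{R}_\nu))$, which is what legitimizes the place-by-place verification of membership in $n(\mathfrak{N}(\mathcal{R}))$.
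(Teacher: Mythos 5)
Your proof is correct, and there is no circularity in invoking Lemma \ref{lemma:ggen} (it precedes the proposition and its Chebotarev argument does not use the exponent-$2$ property), but it takes a genuinely different and much heavier route than the paper. The paper's entire proof is the observation that $J_K^2\subset n(\mathfrak N(\mathcal R))$: central ideles normalize everything, i.e.\ for \emph{every} prime $\nu$ --- including primes dividing $N_{\mathcal R}$ and primes ramifying in $\A$ --- the scalars $K_\nu^\times$ lie in $N(\mathcal R_\nu)$, and the reduced norm of a scalar $x$ is $x^2$, so $(K_\nu^\times)^2\subset n(N(\mathcal R_\nu))$ at all places; hence every square in $J_K$ is the reduced norm of a central element of $\mathfrak N(\mathcal R)$, and the quotient $G_{\mathcal R}=J_K/K^\times n(\mathfrak N(\mathcal R))$ visibly has exponent $2$. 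Your route instead routes through Chebotarev (via Lemma \ref{lemma:ggen}) and the local identity $n(N(\mathcal R_{\nu_i}))=\mathcal O_{\nu_i}^\times (K_{\nu_i}^\times)^2$ at good primes; this is valid, but it makes the result look conditional on a careful choice of generators when it is not. In particular your closing diagnosis is mistaken: you assert that at a prime dividing $N_{\mathcal R}$ or ramifying in $\A$ the membership $\pi_\nu^2\in n(N(\mathcal R_\nu))$ ``need not hold,'' whereas it always holds by the scalar observation above --- what can fail at bad primes is only the exact equality $n(N(\mathcal R_\nu))=\mathcal O_\nu^\times(K_\nu^\times)^2$ (the image can be larger). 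So the restriction of generators to primes outside $S$, which you present as the crux, is unnecessary for this proposition; its real purpose in the paper comes later, when the parameterization of the genus requires split, maximal local behavior. The one thing your argument buys is that it avoids even mentioning central elements, but at the cost of class field theory; the paper's argument is both more elementary and more general.
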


\begin{proof}
This is clear since $J_K^2\subset n(\mathfrak N (\mathcal R))$.
\end{proof}

Given orders $\mathcal D, \mathcal E$ in the genus of $\mathcal R$, we define the \textit{distance} $\rho(\mathcal D,\mathcal E)$ as follows. Let $\tilde{x}_\mathcal D, \tilde{x}_\mathcal E$ be defined as in the paragraph preceding Theorem \ref{theorem:bijection}. We define $\rho (\mathcal D, \mathcal E )$ to be the coset $n(\tilde{x}_\mathcal D^{-1}  \tilde{x}_\mathcal E ) H_\mathcal R$ in $G_\mathcal R$. By Proposition \ref{prop:twogrp} this is the same coset as $n(\tilde{x}_\mathcal D  \tilde{x}_\mathcal E ) H_\mathcal R$. It is not difficult to see that our definition of $\rho(-,-)$ is well-defined. Indeed, let $\tilde{x}^\prime_\mathcal D, \tilde{x}^\prime_\mathcal E$ be another pair of ideles such that $x^\prime_{\mathcal D_\nu}\mathcal D_\nu ({x^\prime_{\mathcal D_\nu}})^{-1}=\mathcal R_\nu=x^\prime_{\mathcal E_\nu}\mathcal E_\nu ({x^\prime_{\mathcal E_\nu}})^{-1}$ for all $\nu$. It is clear from Theorem \ref{theorem:bijection} and the paragraph preceding it that this implies $\A^\times \tilde{x}_\mathcal D \mathfrak N(\mathcal R)=\A^\times \tilde{x}^\prime_\mathcal D \mathfrak N(\mathcal R)$ and $\A^\times \tilde{x}_\mathcal E \mathfrak N(\mathcal R)=\A^\times \tilde{x}^\prime_\mathcal E \mathfrak N(\mathcal R)$, hence $n(\tilde{x}_\mathcal D  \tilde{x}_\mathcal E ) H_\mathcal R= n(\tilde{x}^\prime_\mathcal D  \tilde{x}^\prime_\mathcal E ) H_\mathcal R$.
Similar arguments show that the following elementary properties are satisfied.

\begin{proposition}\label{proposition:distanceproperties}
Let $\mathcal D,\mathcal E, \mathcal E^\prime$ lie in the genus of $\mathcal R$.

\begin{enumerate}
	\item $\rho(\mathcal D,\mathcal E)=\rho(\mathcal E,\mathcal D)$
	\item $\rho(\mathcal D,\mathcal E)$ is trivial if and only if $\mathcal D\cong \mathcal E$
	\item If $\mathcal E\cong \mathcal E^\prime$ then $\rho(\mathcal D,\mathcal E)=\rho(\mathcal D,\mathcal E^\prime)$
\end{enumerate}
\end{proposition}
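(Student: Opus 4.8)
The plan is to package all three assertions around a single object: the composite map
$\Phi$ sending an isomorphism class in the genus of $\mathcal R$ first to its double coset $\A^\times \tilde{x}_\mathcal E \mathfrak N(\mathcal R)$ and then, via the reduced norm, to the coset $n(\tilde{x}_\mathcal E) H_\mathcal R \in G_\mathcal R$. The genus-to-double-coset correspondence recalled before Theorem \ref{theorem:bijection} shows that isomorphic orders land in the same double coset, and Theorem \ref{theorem:bijection} shows that the reduced norm carries distinct double cosets to distinct classes in $K^\times \backslash J_K / n(\mathfrak N(\mathcal R))$; hence $\Phi$ is a well-defined bijection onto $G_\mathcal R$. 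With this in hand we have $\rho(\mathcal D,\mathcal E) = \Phi(\mathcal D)^{-1}\Phi(\mathcal E)$ by definition, and each property reduces to a short manipulation in $G_\mathcal R$ using Proposition \ref{prop:twogrp}.

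For part (1), I would write $\rho(\mathcal D,\mathcal E) = n(\tilde{x}_\mathcal D)^{-1} n(\tilde{x}_\mathcal E) H_\mathcal R$ and $\rho(\mathcal E,\mathcal D) = n(\tilde{x}_\mathcal E)^{-1} n(\tilde{x}_\mathcal D) H_\mathcal R$. These two cosets are inverse to one another, and since $G_\mathcal R$ is abelian of exponent $2$ by Proposition \ref{prop:twogrp}, every coset equals its own inverse; symmetry follows at once. For part (2), triviality of $\rho(\mathcal D,\mathcal E)$ says exactly that $n(\tilde{x}_\mathcal D) H_\mathcal R = n(\tilde{x}_\mathcal E) H_\mathcal R$, i.e. $\Phi(\mathcal D) = \Phi(\mathcal E)$. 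Because $\Phi$ is injective, this is equivalent to the equality of the underlying double cosets, which by the correspondence means $\mathcal D \cong \mathcal E$; the forward implication uses only that $\Phi$ is well defined, while the converse rests on the injective half of Theorem \ref{theorem:bijection} (the part powered by Strong Approximation).

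For part (3), the cleanest route is to observe that $\mathcal E \cong \mathcal E'$ forces $\Phi(\mathcal E) = \Phi(\mathcal E')$, whence $\rho(\mathcal D,\mathcal E) = \Phi(\mathcal D)^{-1}\Phi(\mathcal E) = \Phi(\mathcal D)^{-1}\Phi(\mathcal E') = \rho(\mathcal D,\mathcal E')$. To see the equality $\Phi(\mathcal E) = \Phi(\mathcal E')$ concretely, I would write the isomorphism as $\mathcal E' = a\mathcal E a^{-1}$ with $a \in \A^\times$ and note that if $\tilde{x}_\mathcal E$ conjugates $\mathcal E_\nu$ to $\mathcal R_\nu$ at every $\nu$, then $\tilde{x}_\mathcal E a^{-1}$ (with $a$ regarded as a constant idele) is a legitimate choice of $\tilde{x}_{\mathcal E'}$; taking reduced norms gives $n(\tilde{x}_{\mathcal E'}) = n(\tilde{x}_\mathcal E)\, n(a)^{-1}$, and since $n(a)\in K^\times \subset H_\mathcal R$ the two cosets coincide. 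I expect the only real subtlety to be conceptual rather than computational: everything is driven by Theorem \ref{theorem:bijection}, which simultaneously guarantees that $\rho$ is independent of the chosen conjugating ideles and that triviality of $\rho$ genuinely detects isomorphism. Once that theorem is granted, the proposition is pure bookkeeping in the exponent-$2$ group $G_\mathcal R$.
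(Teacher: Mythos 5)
Your proof is correct and follows essentially the same route as the paper, which disposes of this proposition with the remark that ``similar arguments'' to its well-definedness argument for $\rho$ apply --- namely, the double-coset parameterization of the genus, the bijection of Theorem \ref{theorem:bijection}, and the exponent-$2$ property of Proposition \ref{prop:twogrp}. Your map $\Phi$ is simply an explicit packaging of exactly those ingredients, so this is the intended argument, carefully written out.
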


Let $L$ be a quadratic field extension of $K$. We have shown that $G_\mathcal{R}$ is an elementary abelian group of exponent $2$ with generators $\overline{e_{\nu_i}}=e_{\nu_i}H_\mathcal R$. We now show
that the generators $\{\overline{e_{\nu_i}}\}$ can be chosen so that the primes $\nu_i$ have certain splitting properties 
in $L/K$.

\begin{lemma}\label{lemma:splitornot}
Let notation be as above.
\begin{compactenum}
\item If $L\not\subset K(\mathcal{R})$, then $G_\mathcal{R}$ is generated by elements $\{\overline{e}_{\nu_i}\}$ where 
$\nu_i$ splits in $L$ for all $i$.

\item If $L\subset K(\mathcal{R})$, then $G_\mathcal{R}$ is generated by elements $\{\overline{e}_{\nu_i}\}$ where 
$\nu_i$ splits in $L$ for all $i>1$, and $\nu_1$ is inert in $L$.
\end{compactenum}
\end{lemma}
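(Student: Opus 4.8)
The plan is to translate both statements, through the Artin reciprocity map, into questions about Frobenius elements and then to apply the Chebotarev density theorem to realize prescribed data by prime ideles $e_{\nu_i}$. Set $M = K(\mathcal{R})$. Since $M$ is by construction the class field attached to the open subgroup $H_\mathcal{R}$, the reciprocity map induces an isomorphism $G_\mathcal{R} = J_K/H_\mathcal{R} \cong \Gal(M/K)$ sending the coset $\overline{e}_{\nu}$ to the Frobenius $\operatorname{Frob}_\nu^{M/K}$, for every prime $\nu$ unramified in $M$. As $\Gal(L/K)$ has order two, a prime $\nu$ unramified in $L$ splits in $L/K$ exactly when $\operatorname{Frob}_\nu^{L/K}$ is trivial and is inert exactly when it is nontrivial. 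By Chebotarev the primes $\nu_i$ below may always be chosen outside the finite set of primes ramifying in $LM$, so that these Frobenius elements are defined.

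For (1), suppose $L \not\subset M$. Because $L/K$ is quadratic, $L\cap M$ is either $K$ or $L$, and the hypothesis forces $L\cap M = K$; restriction then gives an isomorphism $\Gal(LM/K) \cong \Gal(L/K)\times \Gal(M/K)$. By Proposition \ref{prop:twogrp}, $G_\mathcal{R}\cong\Gal(M/K)$ is an elementary abelian $2$-group, so I would fix an $\mathbb{F}_2$-basis $g_1,\dots,g_r$. For each $j$, Chebotarev produces infinitely many primes $\nu_j$ whose Frobenius in $\Gal(LM/K)$ equals the element $(1,g_j)$ of $\Gal(L/K)\times\Gal(M/K)$. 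Any such $\nu_j$ satisfies $\operatorname{Frob}_{\nu_j}^{L/K}=1$, hence splits in $L$, while $\overline{e}_{\nu_j}=g_j$ in $G_\mathcal{R}$. Thus $\{\overline{e}_{\nu_j}\}$ generates $G_\mathcal{R}$ and every $\nu_j$ splits in $L$.

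For (2), suppose $L\subset M$. Restriction now gives a surjection $\Gal(M/K)\to\Gal(L/K)$, which I transport to a surjective homomorphism $\phi\colon G_\mathcal{R}\to\Z/2\Z$; its kernel $H$ is a hyperplane of the $\mathbb{F}_2$-vector space $G_\mathcal{R}$. Because $L\subset M$, the element $\operatorname{Frob}_\nu^{L/K}$ is the restriction of $\operatorname{Frob}_\nu^{M/K}$, so $\nu$ splits in $L$ precisely when $\overline{e}_\nu\in H$ and is inert precisely when $\overline{e}_\nu\notin H$. Since $H$ is a hyperplane, I would extend a basis of $H$ by one vector off $H$ to obtain an $\mathbb{F}_2$-basis $g_1,h_2,\dots,h_r$ of $G_\mathcal{R}$ with $g_1\notin H$ and $h_2,\dots,h_r\in H$. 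Applying Chebotarev to $M/K$, realize $g_1$ as $\overline{e}_{\nu_1}$ and each $h_i$ as $\overline{e}_{\nu_i}$; then $\nu_1$ is inert in $L$, each $\nu_i$ with $i>1$ splits in $L$, and $\overline{e}_{\nu_1},\dots,\overline{e}_{\nu_r}$ generate $G_\mathcal{R}$, as desired.

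The only real obstacle is the simultaneous control of the Frobenius in $\Gal(L/K)$ and in $\Gal(M/K)$ needed for (1); this is supplied exactly by the direct-product decomposition $\Gal(LM/K)\cong\Gal(L/K)\times\Gal(M/K)$, which holds because $L\cap M=K$, after which Chebotarev finishes the argument. Part (2) reduces to linear algebra over $\mathbb{F}_2$ once one notices that splitting in $L$ is detected by membership in the hyperplane $H=\ker\phi$.
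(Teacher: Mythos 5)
Your proposal is correct and takes essentially the same approach as the paper: part (1) rests on Chebotarev plus the fact that $L\not\subset K(\mathcal{R})$ forces $L\cap K(\mathcal{R})=K$ (the paper phrases this through the restriction isomorphism $\Gal(K(\mathcal{R})L/L)\cong\Gal(K(\mathcal{R})/K)$ and degree-one primes of $L$, while you use the equivalent direct-product decomposition of $\Gal(LK(\mathcal{R})/K)$ and prescribe Frobenius $(1,g_j)$), and part (2) is in both cases the same $\mathbb{F}_2$-linear algebra, extending a basis of the hyperplane $\Gal(K(\mathcal{R})/L)$ by one element lying off it.
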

\begin{proof}

We first suppose that $L\not\subset K(\mathcal R)$. By the Chebotarev density theorem we may generate $\mbox{Gal}(K(\mathcal R)L/L)$ with the Frobenius elements associated to primes of $L$ having degree one over $K$ (since the set of primes of $L$ with degree greater than one over $K$ has density zero). As $\mbox{Gal}(K(\mathcal R)L/L)$ is isomorphic to $\mbox{Gal}(K(\mathcal R)/K)$ via restriction to $K(\mathcal R)$, we may generate the latter group with Frobenius elements associated to primes of $K$ splitting completely in $L/K$. These automorphisms correspond, via the Artin map, to the generators $\{\overline{e_{\nu_j}}\}_{j=1}^m$
of $G_\mathcal R$. We have therefore proven the first assertion. 

Suppose now that $L\subset K(\mathcal R)$. Let $\lambda$ be a prime of $K$ which is inert in the extension $L/K$ and set $\nu_1=\lambda$. Indeed, viewing $\displaystyle\Gal(K(\mathcal{R})/L)$
as a subgroup of $\displaystyle\Gal(K(\mathcal{R})/K)$, consider the set $\{(\lambda, K(\mathcal{R})/K), \{(\nu_j,K(\mathcal{R})/K)\}_{j=2}^m \}$
where $\{(\nu_j,K(\mathcal{R})/K)\}_{j=2}^m$ is an $m-1$ element generating set of $\displaystyle\Gal(K(\mathcal{R})/L)$. We claim that this set
generates $\displaystyle\Gal(K(\mathcal{R})/K)$. As $\displaystyle\Gal(K(\mathcal{R})/K)$ and $\displaystyle\Gal(K(\mathcal{R})/L)$ are elementary abelian
groups of exponent $2$, they are $\mathbb{F}_2$-vector spaces. The set $\{(\nu_j,K(\mathcal{R})/K)\}_{j=2}^m$ is therefore a basis of $\displaystyle\Gal(K(\mathcal{R})/L)$,
hence linearly independent. To show that $\{(\lambda, K(\mathcal{R})/K), \{(\nu_j,K(\mathcal{R})/K)\}_{j=2}^m \}$ is a basis for $\displaystyle\Gal(K(\mathcal{R})/K)$,
it suffices to show that $(\lambda, K(\mathcal{R})/K)$ is not an $\mathbb{F}_2$-linear combination of elements of $\{(\nu_j,K(\mathcal{R})/K)\}_{j=2}^m$. But this is clear
as all of the elements of $\{(\nu_j,K(\mathcal{R})/K)\}_{j=2}^m$ restrict to the trivial element of $\displaystyle\Gal(L/K)$ while $(\lambda, K(\mathcal{R})/K)$ does not.\end{proof}



\section{Parameterizing the genus of $\mathcal R$}\label{section:parameterizing}

Let $\{e_{\nu_i}\}_{i=1}^m\subset J_K$ be such that $\{\overline{e_{\nu_i}}\}_{i=1}^m$ 
generate $G_\mathcal{R}=J_K /K^\times n(\mathfrak{N}((\mathcal{R}))\cong (\Z/2\Z)^m$. By
Lemma \ref{lemma:ggen} one may choose this generating set so that each $\nu_i$ 
is non-archimedean and split in $\A$ and so that $\mathcal{R}_{\nu_i}$ is maximal for $1\leq i \leq m$. Throughout 
the remainder of this paper we will only consider generating sets $\{\overline{e_{
\nu_i}}\}_{i=1}^m$ of $G_\mathcal{R}$ which satisfy these properties. For each $1\leq i
\leq m$, let $\mathcal{R}_{\nu_i}^\prime$ be a maximal order of $\A_{\nu_i}$ which is 
adjacent to $\mathcal{R}_{\nu_i}$ in the tree of maximal orders of $\A_{\nu_i}\cong 
M_2(K_{\nu_i})$. For the basic definitions and properties concerning the tree of maximal
orders of $M_2(k)$ (for $k$ a local field) we refer the reader to Section 2.2 of \cite{vigneras}.

Given $\gamma=(\gamma_i)\in (\Z/2\Z)^m$, we define the order $D^\gamma$ (via the local-global correspondence) as having the following local factors

\begin{displaymath}
\mathcal{D}_\nu^\gamma = \left\{ \begin{array}{ll}
\mathcal{R}_{\nu_i} & \textrm{if $\nu=\nu_i$ and $\gamma_i=0$}\\
\mathcal{R}_{\nu_i}^\prime & \textrm{if $\nu=\nu_i$ and $\gamma_i=1$}\\
\mathcal{R}_\nu & \textrm{otherwise.}
\end{array}\right.
\end{displaymath}

We will show that the set $\{\mathcal{D}^\gamma\}_{\gamma\in (\Z/2\Z)^m}$ consists of representatives of all $2^m$ isomorphism classes
in the genus of $\mathcal{R}$. We therefore call $\{\mathcal{D}^\gamma\}_{\gamma\in (\Z/2\Z)^m}$ a \textit{parameterization} of the genus of $\mathcal{R}$. Note that by 
construction, if $\gamma=(0,0,...)$, $\mathcal{D}^\gamma$ is $\mathcal{R}$.

\begin{proposition}
Let notation be as above. The orders of $\{\mathcal{D}^\gamma\}_{\gamma\in (\Z/2\Z)^m}$ are pairwise non-isomorphic and represent all isomorphism classes of the genus of $\mathcal{R}$. In particular, $\mathcal D^\gamma\cong\mathcal D^{\gamma^\prime}$ if and only if $\gamma=\gamma^\prime$.
\end{proposition}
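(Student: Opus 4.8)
The plan is to establish the bijection between $(\Z/2\Z)^m$ and the isomorphism classes in the genus of $\mathcal{R}$ by transporting everything through the double-coset description from Theorem \ref{theorem:bijection}. The key observation is that under the composite bijection
\begin{displaymath}
\{\text{iso. classes in genus}\} \longleftrightarrow \A^\times \backslash J_\A/\mathfrak{N}(\mathcal{R}) \longleftrightarrow K^\times\backslash J_K/n(\mathfrak{N}(\mathcal{R}))=G_\mathcal{R},
\end{displaymath}
the order $\mathcal{D}^\gamma$ should correspond to the coset $\prod_i \overline{e_{\nu_i}}^{\,\gamma_i}$ in $G_\mathcal{R}$. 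First I would make this precise: for each $i$ with $\gamma_i=1$, since $\mathcal{R}_{\nu_i}'$ is adjacent to $\mathcal{R}_{\nu_i}$ in the tree of maximal orders of $\A_{\nu_i}\cong M_2(K_{\nu_i})$, there is a local element $g_{\nu_i}\in\A_{\nu_i}^\times$ conjugating one to the other, and by the standard description of adjacency in the tree (Section 2.2 of \cite{vigneras}) one may choose $g_{\nu_i}$ so that $n(g_{\nu_i})$ generates $K_{\nu_i}^\times/\mathcal{O}_{\nu_i}^\times {K_{\nu_i}^\times}^2$ up to the normalizer image; concretely $n(g_{\nu_i})$ has odd valuation, so it represents $\pi_{\nu_i}$ modulo $n(N(\mathcal{R}_{\nu_i}))=\mathcal{O}_{\nu_i}^\times{K_{\nu_i}^\times}^2$. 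Assembling these into an idele $\tilde{x}_{\mathcal{D}^\gamma}$ that is conjugation-trivial away from the chosen primes, I would verify that $n(\tilde{x}_{\mathcal{D}^\gamma})H_\mathcal{R}=\prod_i \overline{e_{\nu_i}}^{\,\gamma_i}$ in $G_\mathcal{R}$.

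Once this correspondence is set up, both assertions follow formally. Since $\{\overline{e_{\nu_i}}\}_{i=1}^m$ is a \emph{basis} of the $\mathbb{F}_2$-vector space $G_\mathcal{R}\cong(\Z/2\Z)^m$ (it generates by hypothesis, and the equality of dimensions forces independence), the map $\gamma\mapsto \prod_i\overline{e_{\nu_i}}^{\,\gamma_i}$ is a bijection from $(\Z/2\Z)^m$ onto $G_\mathcal{R}$. Composing with the bijections above, $\gamma\mapsto[\mathcal{D}^\gamma]$ is a bijection onto the set of isomorphism classes, which gives simultaneously that the $\mathcal{D}^\gamma$ exhaust all classes and that they are pairwise non-isomorphic. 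In fact $\mathcal{D}^\gamma\cong\mathcal{D}^{\gamma'}$ is equivalent, via Proposition \ref{proposition:distanceproperties}(2), to $\rho(\mathcal{D}^\gamma,\mathcal{D}^{\gamma'})$ being trivial, i.e. to $n(\tilde{x}_{\mathcal{D}^\gamma}^{-1}\tilde{x}_{\mathcal{D}^{\gamma'}})\in H_\mathcal{R}$, which by the correspondence says $\prod_i\overline{e_{\nu_i}}^{\,\gamma_i+\gamma_i'}$ is trivial in $G_\mathcal{R}$, and by independence of the basis this forces $\gamma=\gamma'$.

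The main obstacle is the first paragraph: correctly identifying which coset of $G_\mathcal{R}$ the order $\mathcal{D}^\gamma$ represents, i.e. checking that a local conjugating element $g_{\nu_i}$ taking $\mathcal{R}_{\nu_i}$ to an adjacent vertex $\mathcal{R}_{\nu_i}'$ necessarily has reduced norm of odd valuation, so that its class modulo $n(N(\mathcal{R}_{\nu_i}))$ is the nontrivial element $\overline{e_{\nu_i}}$. This is where the tree structure is essential: adjacency corresponds to the two vertices being of the form $M_2(\mathcal{O}_{\nu_i})$ and $g M_2(\mathcal{O}_{\nu_i})g^{-1}$ with $g=\mathrm{diag}(\pi_{\nu_i},1)$ up to $\mathrm{GL}_2(\mathcal{O}_{\nu_i})K_{\nu_i}^\times$, whose reduced norm $\pi_{\nu_i}$ indeed has odd valuation and hence is nontrivial in $K_{\nu_i}^\times/\mathcal{O}_{\nu_i}^\times{K_{\nu_i}^\times}^2$. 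I would also need to confirm that at every prime where $\mathcal{D}^\gamma$ agrees with $\mathcal{R}$ the local conjugator can be taken trivial, so that $\tilde{x}_{\mathcal{D}^\gamma}$ is a genuine idele and contributes nothing outside the $\nu_i$; this is immediate from the definition of $\mathcal{D}^\gamma$. The remaining verifications are the routine well-definedness and functoriality facts already recorded in the discussion preceding Proposition \ref{proposition:distanceproperties}.
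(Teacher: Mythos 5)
Your proposal is correct and takes essentially the same approach as the paper: the paper's proof likewise rests on the tree-theoretic fact that a conjugator between adjacent maximal orders has reduced norm of odd valuation, uses it to compute $\rho(\mathcal{D}^\gamma,\mathcal{D}^{\gamma'})=\prod_i \overline{e}_{\nu_i}^{\,\gamma_i+\gamma_i'}$, and concludes via Proposition \ref{proposition:distanceproperties} together with the count of $2^m$ classes. Your extra detail (the explicit idele and the basis-independence remark) just makes explicit what the paper leaves implicit.
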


\begin{proof} 
It follows from \cite[Section 2.2]{vigneras} that if $1\leq i\leq m$ and $x\in \A_{\nu_i}^\times$ is such that $x\mathcal R^\prime_{\nu_i}x^{-1}=\mathcal R_{\nu_i}$ then $ord_{\nu_i}(n(x))\equiv 1\pmod{2}$. Using this, it is easy to see that $\rho(\mathcal{D}^\gamma,\mathcal{D}^{\gamma^\prime})=\prod_{i=1}^m \overline{e}_{\nu_i}^{\gamma_i + \gamma_i^\prime \pmod{2}}$. By Proposition \ref{proposition:distanceproperties}, $\mathcal{D}^\gamma\cong \mathcal{D}^{\gamma^\prime}$ if and only if $\rho(\mathcal D^\gamma, \mathcal D^{\gamma^\prime})$ is trivial. As there are $2^m$ non-isomorphic orders $\mathcal D^\gamma$ and $2^m$ isomorphism classes in the genus of $\mathcal R$, the proposition follows.\end{proof}


\section{Selectivity}\label{section:main}

Let $\Omega$ be a commutative, quadratic $\mathcal O_K$-order and assume that an embedding of $\Omega$ into $\mathcal R$ exists. We are interested in determining the proportion of isomorphism classes in the genus of $\mathcal R$ whose representatives admit an embedding of $\Omega$. As in \cite{chinburg}, we shall say that $\Omega$ is \textit{selective} for the genus of $\mathcal{R}$ if $\Omega$ does not embed into every order in the genus of $\mathcal{R}$.

\begin{remark}
Suppose that $\mathcal E$ is an order in the genus of $\mathcal R$,  $\Omega$ is an integral domain and $\varphi$ is an embedding of $\Omega$ into $\mathcal E$. Then $\varphi$ extends to an embedding of the quotient field of $\Omega$ into $\mathfrak A$. Such an embedding is, by the Skolem-Noether theorem, given by conjugation by an element of $\mathfrak A^\times$. Thus an order $\mathcal E$ in the genus of $\mathcal R$ admits an embedding of $\Omega$ if and only if it contains a conjugate of $\Omega$.
\end{remark}

\subsection{Obstructions to selectivity}\label{subsection:obstructionstoselectivity}

In this section we describe several easy to check criteria that, if satisfied, preclude the possibility of selectivity. In order to show that in these cases $\Omega$ can be embedded into every order in the genus of $\mathcal R$, we employ the following strategy.

Let $\{e_{\nu_i}\}_{i=1}^m\subset J_K$ be such that $\{\overline{e_{\nu_i}}\}$ generate 
$G_\mathcal{R}=J_K /{K^\times \cdot n(\mathfrak{N}(\mathcal{R}))}$. 
Note that by Lemma \ref{lemma:ggen} we may assume that each $\nu_i$ 
is non-archimedean, that no $\nu_i$ ramifies in $\A$ and that $\mathcal{R}_{\nu_i}$ is 
maximal for all $i$. For each $\nu_i$, we will construct two distinct local maximal orders 
$\D_{\nu_i}$ and $\D_{\nu_i}^\prime$ which both contain $\Omega$ (that is, $\Omega\subset\D_{\nu_i}\cap\D_{\nu_i}^\prime$ for all $\nu_i$) 
and are adjacent in the tree of maximal orders of $\A_{\nu_i}\cong M_2(K_{\nu_i})$. 
By hypothesis $\mathcal R$ also contains $\Omega$. Then for each $\gamma\in (\mathbb{Z}/2\mathbb{Z})^m$, 
let $\mathcal D^\gamma$ be the global order defined by the local factors

\begin{displaymath}
\mathcal{D}_\nu^\gamma = \left\{ \begin{array}{ll}
\mathcal{D}_{\nu_i} & \textrm{if $\nu=\nu_i$ and $\gamma_i=0$}\\
\mathcal{D}_{\nu_i}^\prime & \textrm{if $\nu=\nu_i$ and $\gamma_i=1$}\\
\mathcal{R}_\nu & \textrm{otherwise.}
\end{array}\right.
\end{displaymath}

As $\Omega$ is contained in every completion of $\mathcal{D}^\gamma$ (for each $\gamma\in (\mathbb{Z}/2\mathbb{Z})^m$), $\Omega$ is contained in $\mathcal{D}^\gamma$ as well. 
These $2^m$ orders represent all isomorphism classes of orders in the genus of $\mathcal{R}$ and each contains a conjugate of $\Omega$. Hence $\Omega$ can be embedded into all orders 
in the genus of $\mathcal{R}$.

\begin{remark}The strategy outlined above was used in \cite{chinburg} by Chinburg and Friedman in order to prove the theorem in the case that $\mathcal{R}$ was a maximal order. 
The same strategy was used by Guo and Qin in \cite{guo-qin} to extend the theorem to Eichler orders of arbitrary level.\end{remark}

\begin{proposition}\label{proposition:case1}
If  $\Omega$ is not an integral domain then every order in the genus of $\mathcal{R}$ admits an embedding of $\Omega$.
\end{proposition}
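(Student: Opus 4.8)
The plan is to carry out the strategy described immediately above the proposition. Fix a generating set $\{\overline{e_{\nu_i}}\}_{i=1}^m$ of $G_\mathcal{R}$ with each $\nu_i$ non-archimedean, split in $\A$, and $\mathcal{R}_{\nu_i}$ maximal; the fixed embedding $\Omega\hookrightarrow\mathcal{R}$ then gives $\Omega_{\nu_i}\subseteq\mathcal{R}_{\nu_i}$ at each such prime. I would set $\mathcal{D}_{\nu_i}=\mathcal{R}_{\nu_i}$ and produce a second maximal order $\mathcal{R}_{\nu_i}'$ \emph{adjacent} to $\mathcal{R}_{\nu_i}$ in the tree of maximal orders of $\A_{\nu_i}\cong M_2(K_{\nu_i})$ which still contains $\Omega_{\nu_i}$; then $\mathcal{D}_{\nu_i}'=\mathcal{R}_{\nu_i}'$. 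With these choices the family $\{\mathcal{D}^\gamma\}_{\gamma\in(\Z/2\Z)^m}$ has exactly the form of a parameterization of the genus of $\mathcal{R}$ as in Section~\ref{section:parameterizing} (only the adjacency of $\mathcal{R}_{\nu_i}$ and $\mathcal{R}_{\nu_i}'$ is used there, via Proposition~\ref{proposition:distanceproperties} and the distance computation), so the $\mathcal{D}^\gamma$ represent all $2^m$ isomorphism classes of the genus; and since $\Omega$ is contained in every local factor of each $\mathcal{D}^\gamma$, it embeds into each of them. Everything therefore reduces to one local assertion.

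The local claim is: if $\mathcal{R}_{\nu_i}$ is a maximal order of $M_2(K_{\nu_i})$ containing $\Omega_{\nu_i}$ and $\Omega$ is not an integral domain, then $\mathcal{R}_{\nu_i}$ has a neighbor $\mathcal{R}_{\nu_i}'$ in the tree that also contains $\Omega_{\nu_i}$. To prove it I would conjugate so that $\mathcal{R}_{\nu_i}=M_2(\mathcal{O}_{\nu_i})=\mathrm{End}(\Lambda)$ with $\Lambda=\mathcal{O}_{\nu_i}^2$, and use that over the discrete valuation ring $\mathcal{O}_{\nu_i}$ the order is monogenic, $\Omega_{\nu_i}=\mathcal{O}_{\nu_i}[\omega]$ with $\omega\in M_2(\mathcal{O}_{\nu_i})$. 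By \cite[Section 2.2]{vigneras} a neighbor of $\mathcal{R}_{\nu_i}$ corresponds to a line in the residue plane $\Lambda/\pi_{\nu_i}\Lambda\cong\mathbb{F}_q^2$, and $\omega$ lies in the associated order exactly when the reduction $\overline{\omega}\in M_2(\mathbb{F}_q)$ preserves that line. Hence it suffices to show that $\overline{\omega}$ has an eigenline over $\mathbb{F}_q$.

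This last point is where the hypothesis enters, and it is the only real content of the argument. Because $\Omega$ is not an integral domain, $\Omega\otimes_{\mathcal{O}_K}K$ is not a field, and neither is its base change $\Omega_{\nu_i}\otimes_{\mathcal{O}_{\nu_i}}K_{\nu_i}$; thus the latter is $K_{\nu_i}\times K_{\nu_i}$ or $K_{\nu_i}[\epsilon]/(\epsilon^2)$. In either case both eigenvalues of $\omega$ lie in $K_{\nu_i}$ (the two components in the split case, a repeated value in the nilpotent case), and being integral over $\mathcal{O}_{\nu_i}$ they lie in $\mathcal{O}_{\nu_i}$; reducing modulo $\pi_{\nu_i}$ shows $\overline{\omega}$ has an eigenvalue, hence an eigenline, in $\mathbb{F}_q^2$. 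The corresponding neighbor $\mathcal{R}_{\nu_i}'$ preserves that line, so it contains $\omega$ and therefore all of $\Omega_{\nu_i}$, proving the local claim. Note that arguing through the eigenvalues of $\omega$ rather than through its discriminant being a square sidesteps the residue characteristic $2$ difficulty, so the argument is uniform in the residue characteristic; this is the subtle point I would be most careful about. By contrast, when $\Omega$ is a domain and $\nu_i$ is inert in the associated quadratic field, $\overline{\omega}$ has no eigenline and $\mathcal{R}_{\nu_i}$ is the \emph{unique} maximal order containing $\Omega_{\nu_i}$, which is precisely the obstruction that makes selectivity possible. With the local claim in hand, the global conclusion follows as in the first paragraph.
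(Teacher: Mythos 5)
Your proof is correct, but it takes a genuinely different route from the paper's. The paper argues globally: since $\Omega$ has zero divisors, $\A\cong M_2(K)$, and $\Omega$ can be conjugated into one of two explicit standard forms --- the diagonal order $\Omega_0$ or the order $\Omega_J$ of matrices $\begin{pmatrix} a & b\\ 0 & a\end{pmatrix}$ with $a\in\mathcal{O}_K$, $b\in J$ for a fractional ideal $J$ --- after which a pair of adjacent maximal orders containing $\Omega$ is simply written down at each $\nu_i$ (e.g.\ $M_2(\mathcal{O}_{\nu_i})$ and its conjugate by $\mathrm{diag}(1,\pi_{\nu_i})$). You instead work locally and intrinsically: monogenicity $\Omega_{\nu_i}=\mathcal{O}_{\nu_i}[\omega]$ over the DVR, the lattice description of neighbors in the tree, and the observation that the non-domain hypothesis forces $\Omega\otimes K_{\nu_i}$ to be $K_{\nu_i}\times K_{\nu_i}$ or $K_{\nu_i}[\epsilon]/(\epsilon^2)$, so that $\omega$ has eigenvalues in $\mathcal{O}_{\nu_i}$ and $\overline{\omega}$ stabilizes a line in the residue plane. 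The paper's approach buys brevity and explicitness --- the containments and adjacency are visible by inspection, with the two cases ($K\times K$ versus $K[\epsilon]$) handled by two displayed formulas --- at the cost of a global conjugation step (one should note that $\Omega$ and $\mathcal R$ can be conjugated simultaneously, so that the hypothesis $\Omega\subset\mathcal R$ is preserved; your version avoids this entirely by fixing the given embedding and keeping $\mathcal{D}_{\nu_i}=\mathcal{R}_{\nu_i}$, so the parameterization of Section~\ref{section:parameterizing} applies verbatim). Your approach buys uniformity and conceptual clarity: the eigenline criterion is exactly the dividing line between the non-selective and potentially selective situations (compare Lemma~\ref{lemma:uniquevertex}, where an inert $\overline{\omega}$ generates $\mathbb{F}_{q^2}$ and fixes no line, hence no second maximal order), it is insensitive to residue characteristic, and the same local lemma also covers the split-prime construction used in Proposition~\ref{proposition:case2}, so it unifies the two non-selectivity arguments. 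The only points you assert without proof --- that $\Omega_{\nu_i}$ is monogenic over the DVR, and that the matrix eigenvalues of $\omega$ are the component values (split case) or the repeated value (nilpotent case) --- are standard and easily supplied (the first from $\Omega_{\nu_i}/\mathcal{O}_{\nu_i}$ being torsion-free since $\mathcal{O}_{\nu_i}$ is integrally closed; the second because the minimal polynomial of $\omega$ has degree $2$ and hence equals the characteristic polynomial), so there is no gap.
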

\begin{proof}

If $\Omega$ is not an integral domain then $\A\cong M_2(K)$ and $\Omega$ is conjugate to a subring of 

\begin{equation*}
\Omega_0=\{ \begin{pmatrix}
  a & 0\\
  0 &  b
\end{pmatrix} : a,b\in\mathcal{O}_{K}\}
\qquad\mbox{ or }\qquad  \Omega_J=\{\begin{pmatrix}
  a & b\\
  0 &  a
\end{pmatrix} : a\in\mathcal{O}_{K}, b\in J\}\end{equation*}

for $J$ a fractional ideal of $K$. 

We may, without loss of generality, assume that $\Omega\subset\Omega_0$ or $\Omega\subset\Omega_J$.

If $\Omega\subset\Omega_0$, set

\begin{displaymath}
\mathcal{D}_{\nu_i}=\begin{pmatrix}
  \mathcal{O}_{\nu_i} & \mathcal{O}_{\nu_i}\\
\mathcal{O}_{\nu_i} &  \mathcal{O}_{\nu_i}
\end{pmatrix},
\qquad
\mathcal{D}_{\nu_i}^\prime=
\begin{pmatrix}
  \mathcal{O}_{\nu_i} & \pi_{\nu_i}^{-1}\mathcal{O}_{\nu_i}\\
  \pi_{\nu_i}\mathcal{O}_{\nu_i} &  \mathcal{O}_{\nu_i}
\end{pmatrix}.
\end{displaymath}

If $\Omega\subset\Omega_J$ for a fractional ideal $J$ of $K$, set 
\begin{displaymath}
\mathcal{D}_{\nu_i}=\begin{pmatrix}
  \mathcal{O}_{\nu_i} & J\mathcal{O}_{\nu_i}\\
  J^{-1}\mathcal{O}_{\nu_i} &  \mathcal{O}_{\nu_i}
\end{pmatrix},
\qquad
\mathcal{D}_{\nu_i}^\prime=
\begin{pmatrix}
  \mathcal{O}_{\nu_i} & \pi_{\nu_i}^{-1}J\mathcal{O}_{\nu_i}\\
  \pi_{\nu_i}J^{-1}\mathcal{O}_{\nu_i} &  \mathcal{O}_{\nu_i}
\end{pmatrix},
\end{displaymath}
In both cases $\mathcal{D}_{\nu_i}$ is conjugate to $\mathcal{D}_{\nu_i}^\prime$ by 
$\begin{pmatrix}
  1 & 0\\
 0 &  \pi_{\nu_i}
\end{pmatrix}$ and $\Omega\subset \mathcal{D}_{\nu_i}\cap\mathcal{D}_{\nu_i}^\prime$. As this holds for all $i$ and 
$\Omega\subset\mathcal{D}_\nu$ for all $\nu\neq\nu_i$ for any $i$ by definition of $\mathcal D$, we have, for every 
$\gamma$, $\Omega\subset \cap_{\nu}(\A\cap\mathcal{D}_\nu^\gamma)=\mathcal{D}^\gamma$. Thus $\Omega$ can be embedded 
into representatives of every isomorphism class in the genus of $\mathcal{R}$.\end{proof}

In light of Proposition \ref{proposition:case1} we henceforth assume that $\Omega$ is an integral domain with quotient field $L$.

\begin{proposition}\label{proposition:case2} 
If $L\not\subset K(\mathcal{R})$, then every order in the 
genus of $\mathcal{R}$ admits an embedding of $\Omega$.
\end{proposition}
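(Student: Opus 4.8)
The plan is to run the embedding strategy described at the start of Section \ref{subsection:obstructionstoselectivity}, the only additional input being a judicious choice of the generating primes $\nu_i$. Since $\Omega$ is an integral domain with quotient field $L$ and $L\not\subset K(\mathcal R)$, Lemma \ref{lemma:splitornot}(1) lets me choose the generators $\{\overline{e}_{\nu_i}\}_{i=1}^m$ of $G_\mathcal R$ so that every $\nu_i$ splits in $L/K$; by Lemma \ref{lemma:ggen} I may simultaneously demand that each $\nu_i$ is non-archimedean, splits in $\A$, and has $\mathcal R_{\nu_i}$ maximal. The splitting of $\nu_i$ in $L$ is precisely the feature that makes the construction go through.

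Next, working locally at a fixed $\nu_i$: because $\nu_i$ splits in $L$ we have $L\otimes_K K_{\nu_i}\cong K_{\nu_i}\times K_{\nu_i}$, so a generator of $\Omega$ over $\mathcal O_K$ maps, under the given embedding $\Omega\hookrightarrow\mathcal R$, to an element of $\A_{\nu_i}\cong M_2(K_{\nu_i})$ with two distinct eigenvalues in $K_{\nu_i}$. Conjugating by a suitable $y_i\in\A_{\nu_i}^\times$ I may assume the image of $\Omega_{\nu_i}$ lies in the diagonal matrices with entries in $\mathcal O_{\nu_i}$. I then set
\begin{displaymath}
\mathcal D_{\nu_i}=y_i^{-1}\,\m2(\mathcal O_{\nu_i})\,y_i,\qquad
\mathcal D_{\nu_i}^\prime=y_i^{-1}\begin{pmatrix}\mathcal O_{\nu_i} & \pi_{\nu_i}^{-1}\mathcal O_{\nu_i}\\ \pi_{\nu_i}\mathcal O_{\nu_i} & \mathcal O_{\nu_i}\end{pmatrix}y_i.
\end{displaymath}
These are two adjacent maximal orders in the tree of $\A_{\nu_i}$ (conjugate by $\mathrm{diag}(1,\pi_{\nu_i})$ in the diagonalized picture), and both contain the image of $\Omega_{\nu_i}$ since the diagonal integral matrices lie in both. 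This is the crux of the argument, and it is exactly where the split hypothesis is used: the diagonal torus fixes an entire geodesic in the tree, so consecutive vertices along it yield adjacent maximal orders sharing the embedded $\Omega_{\nu_i}$; at an inert or ramified prime the corresponding torus fixes a single vertex and no such pair exists. For the basic tree facts I refer to Section 2.2 of \cite{vigneras}.

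With these local data in hand I assemble the orders $\mathcal D^\gamma$, $\gamma\in(\Z/2\Z)^m$, exactly as in the strategy. Since all $\mathcal D_{\nu_i},\mathcal D_{\nu_i}^\prime$ are maximal and agree with $\mathcal R_\nu$ away from the $\nu_i$, each $\mathcal D^\gamma$ lies in the genus of $\mathcal R$. The adjacency of $\mathcal D_{\nu_i}$ and $\mathcal D_{\nu_i}^\prime$ forces the conjugating idele at $\nu_i$ to have norm of odd valuation, so the distance computation carried out in Section \ref{section:parameterizing} applies verbatim and gives $\rho(\mathcal D^\gamma,\mathcal D^{\gamma^\prime})=\prod_{i=1}^m\overline{e}_{\nu_i}^{\,\gamma_i+\gamma_i^\prime}$; by Proposition \ref{proposition:distanceproperties} the $2^m$ orders $\mathcal D^\gamma$ are pairwise non-isomorphic and hence represent all isomorphism classes of the genus. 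Finally, $\Omega$ is contained in every completion $\mathcal D_\nu^\gamma$ and therefore in $\mathcal D^\gamma=\bigcap_\nu(\A\cap\mathcal D_\nu^\gamma)$, so every isomorphism class in the genus of $\mathcal R$ admits an embedding of $\Omega$.
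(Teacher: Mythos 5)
Your proof is correct and takes essentially the same approach as the paper's: choose the generating primes to be split in $L$ via Lemma \ref{lemma:splitornot}, diagonalize the image of $\Omega$ locally at each such prime, place it inside two adjacent maximal orders containing the integral diagonal matrices, and assemble the orders $\mathcal D^\gamma$ to cover every isomorphism class in the genus. The only cosmetic differences are that you establish the local diagonalization by a direct eigenvalue argument where the paper cites Lemma 2.2 of \cite{brztwo}, and you re-verify the distance computation that the paper delegates to the strategy paragraph and Section \ref{section:parameterizing}.
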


\begin{proof}
By Lemma \ref{lemma:splitornot} we can choose 
primes $\nu_i$ of $K$ which split in $L$ such that $\{\overline{e_{\nu_i}}\}$ 
generate $G_\mathcal{R}$. Let $\lambda=\nu_i$ for some $i$. As $\lambda$ splits in 
$L/K$, $\A_\lambda$ must be split (this is immediate from the Albert-Brauer-Hasse-Noether Theorem), 
so there is a $K_\lambda-$isomorphism $f_\lambda :\A_\lambda \longrightarrow M_2(K_\lambda)$ 
such that
\begin{displaymath}
f_\lambda(L)\subset \begin{pmatrix}
  K_\lambda & 0\\
 0 &  K_\lambda
\end{pmatrix}
\end{displaymath}
by Lemma 2.2 of \cite{brztwo}. Then
\begin{displaymath}
f_\lambda(\Omega)\subset f_\lambda(\mathcal{O}_L)\subset\begin{pmatrix}
  \mathcal{O}_{K_\lambda} & 0\\
 0 &  \mathcal{O}_{K_\lambda}
\end{pmatrix}.
\end{displaymath}
Choose $\mathcal{D}_{\lambda}$ and $\mathcal{D}_{\lambda}^\prime$ so that
\begin{displaymath}
\mathcal{D}_{\lambda}=f_\lambda^{-1}(\begin{pmatrix}
  \mathcal{O}_{K_\lambda} &   \mathcal{O}_{K_\lambda}\\
   \mathcal{O}_{K_\lambda} &  \mathcal{O}_{K_\lambda}
\end{pmatrix}),\qquad\qquad \mathcal{D}_{\lambda}^\prime =f^{-1}_\lambda(\begin{pmatrix}
  \mathcal{O}_{K_\lambda} & \pi_\lambda^{-1}\mathcal{O}_{K_\lambda}\\
  \pi_\lambda\mathcal{O}_{K_\lambda} &  \mathcal{O}_{K_\lambda}
\end{pmatrix}).
\end{displaymath}
As $\lambda$ ranges over all $\nu_i$, we see that $\Omega\subset\mathcal{D}_{\nu_i}\cap\mathcal{D}_{\nu_i}^\prime$ 
for all $i$. By the strategy outlined in the beginning of this section, $\Omega$ can be embedded into representatives of every isomorphism 
class in the genus of $\mathcal{R}$.
\end{proof}

Recalling that the conductor of $K(\mathcal R)$ is divisible only by the prime divisors of the level ideal $N_\mathcal R$ of $\mathcal R$, we have the following immediate corollary to Proposition \ref{proposition:case2}.

\begin{corollary}\label{corollary:notdividinglevelideal} If any finite prime of $K$ not dividing $N_\mathcal R$ ramifies in $L$, then every order in the genus of $\mathcal{R}$ admits an embedding of $\Omega$.\end{corollary}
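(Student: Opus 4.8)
The plan is to reduce the corollary directly to Proposition \ref{proposition:case2} by showing that the ramification hypothesis forces $L\not\subset K(\mathcal{R})$. The only extra ingredient is the fact recalled immediately before the statement, namely that the conductor of $K(\mathcal{R})/K$ is divisible only by the prime divisors of the level ideal $N_\mathcal R$. I would argue by contradiction: suppose $\nu$ is a finite prime of $K$ with $\nu\nmid N_\mathcal R$ that ramifies in $L/K$, and suppose toward a contradiction that $L\subset K(\mathcal{R})$.

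First I would invoke transitivity of ramification. Since $\nu$ ramifies in the subextension $L/K$ and $L\subset K(\mathcal{R})$, the prime $\nu$ must ramify in $K(\mathcal{R})/K$ as well; indeed, if $\nu$ were unramified in $K(\mathcal{R})$ then the inertia group at any prime of $K(\mathcal{R})$ above $\nu$ would be trivial, and its image in $\Gal(L/K)$ would also be trivial, so $\nu$ would be unramified in $L$, contrary to hypothesis.

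Next, I would use that $K(\mathcal{R})/K$ is abelian (by Proposition \ref{prop:twogrp} the group $G_\mathcal R$ is elementary abelian of exponent $2$). For an abelian extension, class field theory tells us that a prime ramifies if and only if it divides the conductor. Hence $\nu$ divides the conductor of $K(\mathcal{R})/K$, and by the recalled property this conductor is divisible only by prime divisors of $N_\mathcal R$. Therefore $\nu\mid N_\mathcal R$, contradicting our assumption that $\nu\nmid N_\mathcal R$.

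This contradiction shows that $L\not\subset K(\mathcal{R})$, whereupon Proposition \ref{proposition:case2} immediately yields that every order in the genus of $\mathcal{R}$ admits an embedding of $\Omega$. The argument is essentially a bookkeeping step on conductors and ramification; the only point requiring care is citing the correct form of the conductor--ramification correspondence for abelian extensions, and confirming that ramification passes from a subfield up to the full field. Given the ``immediate corollary'' billing, I do not anticipate any genuine obstacle beyond assembling these standard class-field-theoretic facts.
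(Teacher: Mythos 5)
Your proof is correct and is exactly the paper's argument: the paper presents this as an immediate corollary of Proposition \ref{proposition:case2}, using precisely the fact that the conductor of $K(\mathcal{R})/K$ is divisible only by primes dividing $N_\mathcal R$, so a prime ramifying in $L$ but not dividing $N_\mathcal R$ forces $L\not\subset K(\mathcal{R})$. You have simply written out the conductor--ramification bookkeeping that the paper leaves implicit.
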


That Corollary \ref{corollary:notdividinglevelideal} allows one to preclude the possibility of selectivity without computing the classfield $K(\mathcal R)/K$ is especially nice, as in practice such a computation may be quite difficult.

Having dealt with the case that $L\not\subset K(\mathcal R)$, we now suppose that $L\subset K(\mathcal R)$.

\begin{proposition}\label{proposition:Linclassfield} If $L\subset K(\mathcal{R})$, then the proportion of isomorphism classes in the genus of $\mathcal R$ whose representatives admit an embedding of $\Omega$ is at least $1/2$.
\end{proposition}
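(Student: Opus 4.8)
The plan is to use the parameterization of the genus established in Section~\ref{section:parameterizing} together with the refined choice of generators afforded by Lemma~\ref{lemma:splitornot}(2). Since we are now in the case $L\subset K(\mathcal R)$, Lemma~\ref{lemma:splitornot}(2) lets me choose the generating set $\{\overline{e}_{\nu_i}\}_{i=1}^m$ of $G_\mathcal R$ so that $\nu_i$ splits in $L/K$ for every $i>1$, while $\nu_1$ is inert in $L$. As in the proofs of Propositions~\ref{proposition:case1} and \ref{proposition:case2}, I will invoke Lemma~\ref{lemma:ggen} to ensure each $\nu_i$ is non-archimedean, unramified in $\A$, and has $\mathcal R_{\nu_i}$ maximal. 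The idea is that the split primes behave exactly as in the proof of Proposition~\ref{proposition:case2}, and only the single inert prime $\nu_1$ poses an obstruction to embedding $\Omega$ into both neighbors.

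First I would treat the indices $i>1$ exactly as in Proposition~\ref{proposition:case2}: because $\nu_i$ splits in $L/K$, the algebra $\A_{\nu_i}$ is split and by Lemma~2.2 of \cite{brztwo} there is an isomorphism $f_{\nu_i}:\A_{\nu_i}\to M_2(K_{\nu_i})$ carrying $L$ into the diagonal, hence carrying $\Omega\subset\mathcal O_L$ into diagonal integral matrices. I then set $\mathcal D_{\nu_i}=f_{\nu_i}^{-1}(M_2(\mathcal O_{\nu_i}))$ and let $\mathcal D_{\nu_i}^\prime$ be the adjacent maximal order obtained by conjugating by $\mathrm{diag}(1,\pi_{\nu_i})$, so that $\Omega\subset\mathcal D_{\nu_i}\cap\mathcal D_{\nu_i}^\prime$. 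For the distinguished index $1$, I cannot use this device, so I simply fix $\mathcal D_{\nu_1}=\mathcal R_{\nu_1}$ with no choice of adjacent neighbor. Concretely, for each $\gamma=(\gamma_1,\dots,\gamma_m)\in(\Z/2\Z)^m$ I define a global order $\mathcal D^\gamma$ whose local factor at $\nu_i$ ($i>1$) is $\mathcal D_{\nu_i}$ if $\gamma_i=0$ and $\mathcal D_{\nu_i}^\prime$ if $\gamma_i=1$, and whose factor at all other primes (including $\nu_1$) is $\mathcal R_\nu$. By construction $\Omega$ embeds into $\mathcal D^\gamma$ for every $\gamma$ with $\gamma_1=0$, that is, for all $\gamma$ in the index-two subgroup of $(\Z/2\Z)^m$ cut out by $\gamma_1=0$.

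Next I would verify that these orders $\{\mathcal D^\gamma : \gamma_1=0\}$ represent distinct isomorphism classes and that they account for exactly half of the genus. The distance computation from Section~\ref{section:parameterizing} gives $\rho(\mathcal D^\gamma,\mathcal D^{\gamma^\prime})=\prod_{i=1}^m\overline{e}_{\nu_i}^{\,\gamma_i+\gamma_i^\prime\pmod 2}$, and since $\{\overline{e}_{\nu_i}\}$ is a basis of the $\mathbb F_2$-vector space $G_\mathcal R$, the orders with $\gamma_1=0$ are pairwise non-isomorphic by Proposition~\ref{proposition:distanceproperties}(2). There are $2^{m-1}$ such $\gamma$, out of the $2^m$ isomorphism classes in the genus, so these comprise precisely a proportion $1/2$ of the isomorphism classes, each of whose representatives admits an embedding of $\Omega$. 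This yields the lower bound of $1/2$ asserted in the proposition.

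The step I expect to require the most care is the honest construction of the local orders at $\nu_1$ and the argument that the $\gamma_1=0$ orders genuinely form a \emph{subgroup} coset-structure compatible with the parameterization, rather than an ad hoc collection; I must confirm that setting the $\nu_1$-factor equal to $\mathcal R_{\nu_1}$ throughout does not secretly force isomorphisms among the $\mathcal D^\gamma$. This is exactly where the basis property of $\{\overline{e}_{\nu_i}\}$ from Lemma~\ref{lemma:splitornot}(2) and Proposition~\ref{prop:twogrp} does the work: because the $\overline{e}_{\nu_i}$ are $\mathbb F_2$-linearly independent, varying $\gamma_i$ for $i>1$ while holding $\gamma_1=0$ produces $2^{m-1}$ distinct cosets in $G_\mathcal R$, hence $2^{m-1}$ distinct isomorphism classes. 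I note that this proposition deliberately proves only the inequality ``at least $1/2$''; the matching upper bound and the precise Frobenius criterion are the content of Theorem~\ref{theorem:introductionmain}, whose proof requires the additional coprimality hypotheses and the more delicate tree-theoretic analysis at the inert prime, and so they are not attempted here.
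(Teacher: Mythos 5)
Your proof is correct and takes essentially the same route as the paper's: choose the generating set via Lemma \ref{lemma:splitornot}(2) with $\nu_1$ inert and all other $\nu_i$ split, reuse the local construction of Proposition \ref{proposition:case2} at the split primes, and observe that the orders whose $\nu_1$-component is $\mathcal{R}_{\nu_1}$ all contain $\Omega$ and account for half of the isomorphism classes. The only cosmetic difference is that you construct just the $\gamma_1=0$ half and verify pairwise non-isomorphism explicitly via the distance formula, whereas the paper invokes its full parameterization $\{\mathcal{D}^\gamma\}$ and restricts to $\{\mathcal{D}^\gamma : \mathcal{D}^\gamma_{\nu_1}=\mathcal{R}_{\nu_1}\}$.
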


\begin{proof}
Identify $\Omega$ with its image in $\mathcal R$ so that $\Omega\subset \mathcal R$. We may choose, by Lemma \ref{lemma:splitornot}, a generating set $\{\overline{e_{\nu_i}}\}_{i=1}^m$ of $G_\mathcal{R}$ for which $\nu_1$ is inert in $L/K$, and $\nu_i$ splits in $L/K$ whenever $i>1$. Let $\{\mathcal{D}^\gamma\}$ be a parameterization of the genus of $\mathcal{R}$ associated to this generating set. The proof of Proposition \ref{proposition:case2} shows that the parameterization $\{\mathcal D^\gamma\}$ can be constructed so that for all $\gamma$, $\Omega$ is contained in $\mathcal{D}^\gamma_{\nu_i}$ whenever $i>1$. As the set of orders $\{ \mathcal D^\gamma :  \mathcal D^\gamma_{\nu_1}=\mathcal R_{\nu_1} \}$ represent half of the isomorphism classes in the genus of $\mathcal R$ and each of these orders contains $\Omega$, we're done. \end{proof}

We summarize Propositions \ref{proposition:case2} and \ref{proposition:Linclassfield} as a theorem.

\begin{theorem}\label{theorem:embedseverywhere} Assume that an embedding of $\Omega$ into $\mathcal R$ exists.
	\begin{enumerate}
		\item If $L\not\subset K(\mathcal{R})$, then every order in the 
		genus of $\mathcal{R}$ admits an embedding of $\Omega$.
		
		\item If $L\subset K(\mathcal{R})$, then the proportion of isomorphism classes in the genus of $\mathcal R$ whose representatives admit an embedding of $\Omega$ is at least $1/2$.
	\end{enumerate}

\end{theorem}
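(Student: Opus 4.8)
The plan is to prove Theorem \ref{theorem:embedseverywhere} by simply assembling the two cases already established as Propositions \ref{proposition:case2} and \ref{proposition:Linclassfield}, since the theorem is explicitly stated as a summary of these two results. The dichotomy is clean: either $L \not\subset K(\mathcal{R})$ or $L \subset K(\mathcal{R})$, and these are mutually exclusive and exhaustive, so no further case analysis is needed.

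First I would observe that part (1) is precisely the statement of Proposition \ref{proposition:case2}, which handles the case $L \not\subset K(\mathcal{R})$. There, the key mechanism is that by Lemma \ref{lemma:splitornot}(1) one may choose a generating set $\{\overline{e_{\nu_i}}\}$ for $G_\mathcal{R}$ in which every $\nu_i$ splits in $L/K$; this forces $\A_{\nu_i}$ to be split and, via Lemma 2.2 of \cite{brztwo}, lets one diagonalize $L$ locally so that $\Omega$ lies in two adjacent maximal orders $\mathcal{D}_{\nu_i}, \mathcal{D}_{\nu_i}^\prime$ simultaneously. The global orders $\mathcal{D}^\gamma$ then exhaust all isomorphism classes in the genus while each contains a conjugate of $\Omega$.

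Next I would note that part (2) is exactly Proposition \ref{proposition:Linclassfield}, covering $L \subset K(\mathcal{R})$. The argument there again invokes Lemma \ref{lemma:splitornot}, this time part (2), to obtain a generating set in which $\nu_1$ is inert in $L/K$ while all $\nu_i$ with $i > 1$ split. The splitting primes allow the same local diagonalization as before, placing $\Omega$ in $\mathcal{D}^\gamma_{\nu_i}$ for every $i > 1$ and every $\gamma$. The subset of parameterized orders with $\mathcal{D}^\gamma_{\nu_1} = \mathcal{R}_{\nu_1}$ then constitutes exactly half the genus and each such order admits an embedding, giving the lower bound of $1/2$.

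Since this theorem merely records the conjunction of two already-proven propositions, there is essentially no obstacle — the only thing to verify is that the two cases are logically complementary and that one need not worry about a prime being both a candidate generator and ramified, which is already guaranteed by the stipulation (carried through Lemma \ref{lemma:ggen} and the setup of Section \ref{section:parameterizing}) that the generating primes $\nu_i$ may be taken non-archimedean, unramified in $\A$, and with $\mathcal{R}_{\nu_i}$ maximal. Thus the proof is a one-line citation of the two propositions.
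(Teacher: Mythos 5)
Your proposal is correct and matches the paper exactly: the paper's own ``proof'' of Theorem \ref{theorem:embedseverywhere} is nothing more than the sentence ``We summarize Propositions \ref{proposition:case2} and \ref{proposition:Linclassfield} as a theorem,'' which is precisely your one-line citation of those two propositions over the exhaustive dichotomy $L\not\subset K(\mathcal{R})$ versus $L\subset K(\mathcal{R})$. Your recap of the internal mechanisms of the two propositions is also accurate, though not needed for this step.
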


\subsection{A selectivity theorem}\label{subsection:aselectivitytheorem}

In this section we constrain slightly the class of orders $\mathcal R \subset \A$ that we consider and provide necessary and sufficient conditions for an order $\Omega$ to be selective for the genus of $\mathcal R$. In the case that $\Omega$ is selective for the genus of $\mathcal R$, we shall see that representatives of exactly $1/2$ of the isomorphism classes of the genus of $\mathcal R$ admit an embedding of $\Omega$. The constraints which we shall impose on $\mathcal R$ are the following:
\begin{equation}\label{first}
	\mbox{The relative discriminant ideal } d_{\Omega/\mathcal{O}_K} \mbox{ of } \Omega \mbox{ is coprime to the level ideal } N_\mathcal R \mbox{ of } \mathcal R. 
\end{equation}
\begin{equation}\label{second}
	\mbox{The set of primes dividing } N_\mathcal R  \mbox{ is disjoint from the set of primes ramifying in } \A. 
\end{equation}

\begin{theorem}\label{theorem:main}
Assume that an embedding of $\Omega$ into $\mathcal{R}$ exists. Then every order in the genus of $\mathcal{R}$ admits an embedding of $\Omega$ except when the following conditions hold:

\begin{enumerate}

\item $\Omega$ is an integral domain whose quotient field $L$ is a quadratic field extension of $K$ which is contained in $\A$.

\item There is a containment of fields $L\subset K(\mathcal{R})$.

\item All primes of $K$ which divide the relative discriminant ideal $d_{\Omega/\mathcal{O}_K}$ of $\Omega$ split in $L/K$.

\end{enumerate}

Suppose now that (1) - (3) hold. Then the isomorphism classes in the genus of $\mathcal{R}$ whose representatives admit an embedding of $\Omega$ comprise exactly half of the isomorphism classes. If $\mathcal{R}$ admits an embedding of $\Omega$ and $\mathcal E$ is another order in the genus of $\mathcal R$, then $\mathcal E$ admits an embedding of $\Omega$ if and only if $\displaystyle\FrobclassfieldL(\rho(\mathcal R,\mathcal E))$ is the trivial element of $Gal(L/K)$.
\end{theorem}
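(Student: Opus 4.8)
The plan is to treat the two assertions separately and to lean on the constructions of Section~\ref{subsection:obstructionstoselectivity} wherever possible. For the first assertion, Propositions~\ref{proposition:case1} and~\ref{proposition:case2} already dispose of the cases where (1) fails ($\Omega$ not a domain) or (2) fails ($L\not\subset K(\mathcal R)$), so the only new situation is that (1) and (2) hold while (3) fails. Here I would first pin down the offending prime: since $d_{\Omega/\mathcal O_K}$ is coprime to $N_\mathcal R$ by \eqref{first} and the conductor of $K(\mathcal R)$ is divisible only by primes dividing $N_\mathcal R$, every prime dividing $d_{\Omega/\mathcal O_K}$ is unramified in $K(\mathcal R)\supset L$; thus a prime $\lambda$ witnessing the failure of (3) is inert in $L$ and divides the conductor of $\Omega$, so $\Omega_\lambda$ is a \emph{non-maximal} order in the unramified quadratic field $L_\lambda$. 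Moreover $\lambda$ must split in $\A$: were it ramified, then $n(N(\mathcal R_\lambda))=K_\lambda^\times$ would force $\lambda$ to split completely in $K(\mathcal R)$, hence in $L$, contradicting inertness.

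With $\lambda$ in hand I would run the strategy of Section~\ref{subsection:obstructionstoselectivity} using $\lambda$ as the distinguished generator. By Lemma~\ref{lemma:splitornot}(2) (and Lemma~\ref{lemma:ggen}) choose generators $\{\overline{e_{\nu_i}}\}$ of $G_\mathcal R$ with $\nu_1=\lambda$ and every other $\nu_i$ split in $L$ and coprime to $N_\mathcal R$; at those split primes the recipe of Proposition~\ref{proposition:case2} yields adjacent maximal orders $\mathcal D_{\nu_i},\mathcal D_{\nu_i}'$ both containing $\Omega$. At $\lambda$ the essential point is that the maximal orders of $\A_\lambda$ containing the fixed $\Omega_\lambda$ form a \emph{connected} subtree of the tree of maximal orders with at least two vertices (this is exactly where non-maximality of $\Omega_\lambda$ is used); hence $\mathcal R_\lambda$ has a neighbour $\mathcal R_\lambda'$ also containing $\Omega_\lambda$, and setting $\mathcal D_\lambda=\mathcal R_\lambda$, $\mathcal D_\lambda'=\mathcal R_\lambda'$ makes every $\mathcal D^\gamma$ contain $\Omega$. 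This proves that $\Omega$ embeds into every class of the genus, completing the first assertion.

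For the second assertion, assume (1)--(3). Fix a parameterization $\{\mathcal D^\gamma\}$ as in Proposition~\ref{proposition:Linclassfield}, with $\nu_1$ inert and $\nu_i$ ($i>1$) split in $L$. That proposition shows every $\mathcal D^\gamma$ with $\gamma_1=0$ admits $\Omega$; since $\rho(\mathcal R,\mathcal D^\gamma)=\prod_i\overline{e_{\nu_i}}^{\,\gamma_i}$ (Section~\ref{section:parameterizing}) and $\FrobclassfieldL(\overline{e_{\nu_i}})=(\nu_i,L/K)$ is trivial precisely when $\nu_i$ splits, these are exactly the classes with $\FrobclassfieldL(\rho(\mathcal R,\mathcal D^\gamma))$ trivial, and they are half of the genus. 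It remains to prove the converse: if $\mathcal E$ in the genus admits $\Omega$ then $\FrobclassfieldL(\rho(\mathcal R,\mathcal E))=1$. I would deduce this from the easy, \emph{necessary} half of an idelic criterion. Fixing the given $\phi_0\colon\Omega\hookrightarrow\mathcal R$ and extending to $L\hookrightarrow\A$, Skolem--Noether shows that any embedding of $\Omega$ into $\mathcal E$ is $\phi_0$ conjugated by some $h\in\A^\times$; writing $\mathcal E_\nu=x_\nu^{-1}\mathcal R_\nu x_\nu$ this gives $x_\nu h\in E_\nu:=\{y\in\A_\nu^\times:\phi_0(\Omega_\nu)\subset y^{-1}\mathcal R_\nu y\}$ for all $\nu$, whence $n(\tilde x_\mathcal E)\in n(\A^\times)\prod_\nu n(E_\nu)$ and $\rho(\mathcal R,\mathcal E)=n(\tilde x_\mathcal E)H_\mathcal R$ lies in the image of $\prod_\nu n(E_\nu)$ in $G_\mathcal R$ (using $n(\A^\times)\subset K^\times\subset H_\mathcal R$).

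The heart of the matter---and the step I expect to be the main obstacle---is the local computation showing that $\FrobclassfieldL$ kills this image, i.e. that $\theta_\nu(n(E_\nu))$ is trivial in $G_\mathcal R$ for the local reciprocity map $\theta_\nu\colon K_\nu^\times\to\Gal(L_\nu/K_\nu)$ at every $\nu$ (the archimedean factors map trivially to $G_\mathcal R$). At a prime split in $L$ the group $\Gal(L_\nu/K_\nu)$ is trivial. At a prime $\nu$ inert in $L$, condition (3) forces $\Omega_\nu$ to be maximal, and the unramified quadratic order $\phi_0(\mathcal O_{L_\nu})$ lies in a \emph{unique} maximal order $M$ of $\A_\nu$ (the unramified torus fixes a unique vertex of the tree); consequently every $y\in E_\nu$ must normalize $M$, so $E_\nu\subset N(M)$ and, when $\nu$ is unramified in $\A$, $n(E_\nu)\subset n(N(M))=\mathcal O_\nu^\times (K_\nu^\times)^2$ consists of elements of even valuation, on which $\theta_\nu$ is trivial---crucially this holds whether or not $\mathcal R_\nu$ is maximal. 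The finitely many remaining finite primes are handled by the hypotheses: a prime ramifying in $L$ would divide $d_{\Omega/\mathcal O_K}$ and so, by (3), cannot occur at a finite place, while at a prime ramifying in $\A$ one has $n(N(\mathcal R_\nu))=K_\nu^\times=(H_\mathcal R)_\nu$, so its contribution is trivial in $G_\mathcal R$. Granting these local facts, $\FrobclassfieldL(\rho(\mathcal R,\mathcal E))=1$, and combined with Proposition~\ref{proposition:Linclassfield} the classes admitting $\Omega$ are exactly the half on which $\FrobclassfieldL(\rho(\mathcal R,-))$ is trivial. The delicate points to verify are the unique-vertex claim for the unramified torus and the identity $n(N(M))=\mathcal O_\nu^\times(K_\nu^\times)^2$ (from the Notation section), together with the dual statement---used in the first assertion---that a non-maximal order in the quadratic field $L_\lambda$ is contained in more than one maximal order of $\A_\lambda$.
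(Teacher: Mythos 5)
Your proposal is correct and follows essentially the same route as the paper's proof: the same case decomposition (failures of (1) and (2) via Propositions \ref{proposition:case1} and \ref{proposition:case2}, failure of (3) handled at an inert prime $\lambda \mid d_{\Omega/\mathcal{O}_K}$ together with the generating set of Lemma \ref{lemma:splitornot}, then the Frobenius criterion when (1)--(3) hold), resting on the same decisive local fact that at a finite inert prime $\mathcal O_{L_\nu}$ lies in a unique maximal order of $\A_\nu$, whose normalizer has reduced norms of even valuation. The two places you organize things differently are mathematically equivalent to the paper's treatment, and the facts you flag as delicate are exactly the ones it proves: your claim that a non-maximal quadratic order lies in more than one (hence in two adjacent) maximal orders is established in Proposition \ref{proposition:case3} by the explicit containments $\Omega\subset\mathcal{O}_{K_\lambda}+\pi_\lambda\mathcal{O}_{L_\lambda}\subset \mathcal D_\lambda\cap\mathcal D_\lambda^\prime$, your unique-vertex claim is Lemma \ref{lemma:uniquevertex} (proved there via Lemma 2.2 of Chinburg--Friedman), and your direct idelic computation with the sets $E_\nu$ is a repackaging of the paper's contradiction argument in Proposition \ref{proposition:case4}, which likewise reduces triviality of $\FrobclassfieldL(\rho(\mathcal R,\mathcal E))$ to even valuations at the inert primes where $\mathcal R_\nu\neq\mathcal E_\nu$.
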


\begin{remark} Although our condition (2) is not the same as that which appears in \cite{chan-xu, chinburg, guo-qin}, it is equivalent whenever
$\mathcal{R}$ is an Eichler order. Indeed, this equivalence plays an important role in the proofs of the aforementioned theorems.\end{remark}

\begin{example} Let $K$ be a number field and $\A$ be a quaternion algebra over $K$ which satisfies the Eichler condition and has no finite ramification. Let $L$ be a maximal subfield of $\A$ and $\Omega=\mathcal O_L$. If $\mathcal R$ is any quaternion order in $\A$ containing $\Omega$ for which assumption (\ref{first}) holds, then by Theorem \ref{theorem:main}, the proportion of isomorphism classes in the genus of $\mathcal R$ whose representatives admit an embedding of $\Omega$ is $[K(\mathcal R)\cap L: K]^{-1}$. This generalizes the $n=2$ case of a theorem of Chevalley \cite{Chevalley-book}, who showed that if $L$ is a maximal subfield of $M_n(K)$ then the ratio of the isomorphism classes of maximal orders of $M_n(K)$ into which $\mathcal O_L$ can be embedded to the total number of isomorphism classes of maximal orders is $[H_K\cap L : K]^{-1}$, where $H_K$ is the Hilbert class field of $K$. 
	
\end{example}

We prove Theorem \ref{theorem:main} as a series of propositions. We have seen in Propositions \ref{proposition:case1} and \ref{proposition:case2} that if either (1) or (2) fail to hold then every order in the genus of $\mathcal R$ admits an embedding of $\mathcal R$. We henceforth assume that conditions (1) and (2) hold.

\begin{proposition}\label{propositon:1and2hold} Both $L/K$ and $\A$ are unramified at all finite primes of $K$ and ramify at the same (possibly empty) set of real primes.\end{proposition}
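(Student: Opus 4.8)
The plan is to handle the finite primes and the real primes separately, in each case playing the hypothesis $L\subset K(\mathcal R)$ (condition (2)) against the two coprimality conditions (\ref{first}) and (\ref{second}) and against the Albert--Brauer--Hasse--Noether theorem (Theorem~\ref{theorem:abhn}). The recurring device is the observation that $K(\mathcal R)$ is the class field of the norm group $H_\mathcal R=K^\times n(\mathfrak N(\mathcal R))$, so that a prime $\nu$ splits completely in $K(\mathcal R)$ precisely when the local component $K_\nu^\times$ lies in $H_\mathcal R$, i.e. when $n(N(\mathcal R_\nu))=K_\nu^\times$; and that splitting completely in $K(\mathcal R)$ forces splitting completely in the subfield $L$.

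First I would show $L/K$ is unramified at every finite prime. Since $L\subset K(\mathcal R)$, the conductor of $L/K$ divides that of $K(\mathcal R)$, and the latter is divisible only by primes dividing the level ideal $N_\mathcal R$; hence every finite prime ramifying in $L/K$ divides $N_\mathcal R$. On the other hand, writing $\Omega=\mathcal O_K+\mathfrak c\,\mathcal O_L$ for its conductor $\mathfrak c$, one has $d_{\Omega/\mathcal O_K}=\mathfrak c^2\,d_{L/K}$, so $d_{L/K}\mid d_{\Omega/\mathcal O_K}$; by (\ref{first}) this forces every finite prime ramifying in $L/K$ to be coprime to $N_\mathcal R$. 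The two conclusions together leave no finite prime ramifying in $L/K$.

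Next I would rule out finite ramification of $\A$. Suppose a finite prime $\nu$ ramifies in $\A$. By (\ref{second}) we have $\nu\nmid N_\mathcal R$, so $\mathcal R_\nu$ is the unique maximal order of the division algebra $\A_\nu$ and $N(\mathcal R_\nu)=\A_\nu^\times$. Since the reduced norm of a quaternion division algebra over a non-archimedean local field is surjective onto $K_\nu^\times$, the local component of $n(\mathfrak N(\mathcal R))$ at $\nu$ is all of $K_\nu^\times$; by the device above $\nu$ then splits completely in $K(\mathcal R)$, and hence in $L$. But $L\subset\A$ together with Theorem~\ref{theorem:abhn} says no prime ramifying in $\A$ may split in $L$, a contradiction. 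Thus $\A$ is unramified at every finite prime. Finally I would match the real ramification by the same two tools: if a real place $\nu$ ramifies in $\A$ then $\nu$ does not split in $L$ by Theorem~\ref{theorem:abhn}, so $L_\nu=\C$ and $\nu$ ramifies in $L/K$; conversely, if a real $\nu$ splits in $\A$ then $\mathcal R_\nu=\A_\nu\cong M_2(\R)$, the reduced norm fills $\R^\times=K_\nu^\times$, so $\nu$ splits completely in $K(\mathcal R)\supseteq L$ and does not ramify in $L/K$. Taking contrapositives shows the real places ramifying in $L/K$ are exactly those ramifying in $\A$.

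The background inputs are routine: the conductor--ramification dictionary for abelian extensions, the formula relating $d_{\Omega/\mathcal O_K}$ to $d_{L/K}$, and the archimedean and non-archimedean reduced-norm images. The one step deserving care, and where I expect the substantive (though standard) work to sit, is the translation between the \emph{local} data $n(N(\mathcal R_\nu))$ and the \emph{global} splitting of $\nu$ in $K(\mathcal R)$ --- that is, reading off the decomposition of $\nu$ from the norm group $H_\mathcal R$, and confirming that the relevant local reduced norms genuinely exhaust $K_\nu^\times$ in the ramified-finite and split-real cases.
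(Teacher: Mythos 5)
Your proposal is correct and follows essentially the same route as the paper's proof: ruling out finite ramification of $\A$ via (\ref{second}), maximality of $\mathcal R_\nu$, and surjectivity of the local reduced norm forcing $\nu$ to split completely in $K(\mathcal R)\supseteq L$ (contradicting Albert--Brauer--Hasse--Noether); ruling out finite ramification of $L/K$ by playing the conductor of $K(\mathcal R)$ against $d_{L/K}\mid d_{\Omega/\mathcal O_K}$ and (\ref{first}); and matching real ramification using Albert--Brauer--Hasse--Noether in one direction and $n(N(\mathcal R_\nu))=\R^\times$ in the other. The only cosmetic difference is that you make explicit two facts the paper leaves implicit (the formula $d_{\Omega/\mathcal O_K}=\mathfrak c^2 d_{L/K}$ and the class-field-theoretic dictionary between $n(N(\mathcal R_\nu))=K_\nu^\times$ and complete splitting), which is fine.
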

\begin{proof}
If a finite prime $\nu$ of $K$ ramifies in $\A$ then by assumption \eqref{second}, $\mathcal R_\nu$ must be maximal, hence $n(N(\mathcal R_\nu))=K_\nu^\times$. It follows that $\nu$ splits completely in $K(\mathcal R)$ and thus in $L/K$ as well. As $L$ embeds into $\A$ we obtain a contradiction to the Albert-Brauer-Hasse-Noether theorem. If $\nu$ is a finite prime of $K$ which ramifies in $L/K$ then $\nu$ ramifies in $K(\mathcal R)/K$ and thus divides the level ideal $N_\mathcal R$. As $\nu$ also divides $d_{\Omega/\mathcal{O}_K}$ we have a contradiction to assumption \eqref{first}. That $L/K$ is ramified at every real prime which ramifies in $\A$ follows from the Albert-Brauer-Hasse-Noether theorem. If $\nu$ is a real prime of $K$ which is unramified in $\A$ then $n(N(\mathcal R_\nu))=\mathbb R^\times$, hence $\nu$ splits completely in $K(\mathcal R)/K$ and thus in $L/K$ as well.\end{proof}

\begin{remark} That $L/K$ and $\A$ are unramified at all finite primes and ramify at exactly the same set of real primes appeared as condition (2) in \cite{chinburg, guo-qin} and may very well be a necessary condition for selectivity to occur at all in $\A$. \end{remark}
	
\begin{proposition}\label{proposition:case3} 
If condition (3) does not hold, then every order in the genus of $\mathcal{R}$ admits an embedding of $\Omega$.
\end{proposition}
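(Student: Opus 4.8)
The plan is to isolate the single inert prime that the failure of (3) provides and to exploit that, at a prime dividing the conductor of $\Omega$, the local order is non-maximal and therefore lies inside two adjacent vertices of the tree. Throughout, conditions (1) and (2) are in force, so Proposition \ref{propositon:1and2hold} gives that both $L/K$ and $\A$ are unramified at every finite prime of $K$. Let $\mathfrak f$ denote the conductor of $\Omega$; since the finite part of $d_{L/K}$ is trivial, the finite part of $d_{\Omega/\mathcal O_K}$ equals $\mathfrak f^2$, so a finite prime divides $d_{\Omega/\mathcal O_K}$ exactly when $\Omega_\lambda\subsetneq(\mathcal O_L)_\lambda$. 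Hence the failure of (3) produces a finite prime $\lambda\mid d_{\Omega/\mathcal O_K}$ that does not split in $L/K$; as $L/K$ is unramified at finite primes, $\lambda$ must be inert, and $\Omega_\lambda=\mathcal O_\lambda+\pi_\lambda^{\,e}\mathcal O_{L_\lambda}$, where $e\geq1$ is the exponent of $\lambda$ in $\mathfrak f$.

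First I would check that $\lambda$ is an admissible distinguished generator. By assumption \eqref{first}, $\lambda\nmid N_\mathcal R$, so $\mathcal R_\lambda$ is maximal, and by Proposition \ref{propositon:1and2hold} the prime $\lambda$ splits in $\A$. Because $\lambda$ is inert in $L$ and $L\subset K(\mathcal R)$, its Frobenius in $\Gal(K(\mathcal R)/K)$ restricts to the nontrivial element of $\Gal(L/K)$; thus $\overline{e_\lambda}$ is a nontrivial class in $G_\mathcal R$ not lying in the subgroup fixing $L$. Running the argument of Lemma \ref{lemma:splitornot}(2) with this $\lambda$ in the role of $\nu_1$ — and using the freedom (as in Lemma \ref{lemma:ggen}) to choose the remaining, split generators $\nu_2,\dots,\nu_m$ outside the finite set of primes dividing $\mathfrak f N_\mathcal R$ — yields a generating set $\{\overline{e_{\nu_i}}\}_{i=1}^m$ of $G_\mathcal R$ in which $\nu_1=\lambda$ is inert in $L$, each $\nu_i$ ($i>1$) splits in $L$ with $\Omega_{\nu_i}$ maximal, and every $\nu_i$ is split in $\A$ with $\mathcal R_{\nu_i}$ maximal.

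The heart of the proof is to produce at each $\nu_i$ a pair of adjacent local maximal orders both containing $\Omega$. For $i>1$ this is the construction of Proposition \ref{proposition:case2}: an isomorphism $f_{\nu_i}\colon\A_{\nu_i}\to M_2(K_{\nu_i})$ carrying $L_{\nu_i}$ into the diagonal places the maximal order $\Omega_{\nu_i}=(\mathcal O_L)_{\nu_i}$ into the diagonal integral matrices, which lie in both $M_2(\mathcal O_{\nu_i})$ and the adjacent order $\begin{pmatrix}\mathcal O_{\nu_i}&\pi_{\nu_i}^{-1}\mathcal O_{\nu_i}\\\pi_{\nu_i}\mathcal O_{\nu_i}&\mathcal O_{\nu_i}\end{pmatrix}$. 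The genuinely new case is $\nu_1=\lambda$. Here I would realize $\A_\lambda\cong M_2(K_\lambda)$ by the regular representation of $L_\lambda$ on an $\mathcal O_\lambda$-basis $\{1,\omega\}$ of $\mathcal O_{L_\lambda}$, so that $\mathcal O_{L_\lambda}$ maps into $\mathcal D_\lambda:=M_2(\mathcal O_\lambda)$ and the image of $\Omega_\lambda=\mathcal O_\lambda+\pi_\lambda^{\,e}\mathcal O_{L_\lambda}$ consists of integral matrices whose two off-diagonal entries lie in $\pi_\lambda^{\,e}\mathcal O_\lambda$. Taking the adjacent order $\mathcal D_\lambda':=\begin{pmatrix}\mathcal O_\lambda&\pi_\lambda\mathcal O_\lambda\\\pi_\lambda^{-1}\mathcal O_\lambda&\mathcal O_\lambda\end{pmatrix}$, a direct inspection of the four entries shows $\Omega_\lambda\subset\mathcal D_\lambda\cap\mathcal D_\lambda'$ precisely because $e\geq1$. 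Thus $\Omega_\lambda$ sits inside two adjacent vertices of the tree — exactly the feature that was unavailable in Proposition \ref{proposition:Linclassfield}, where $\Omega_\lambda$ was maximal and hence pinned to a single vertex.

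Finally I would assemble these local data by the strategy of Section \ref{subsection:obstructionstoselectivity}: for each $\gamma\in(\Z/2\Z)^m$ the order $\mathcal D^\gamma$ with local factor $\mathcal D_{\nu_i}$ or $\mathcal D_{\nu_i}'$ at $\nu_i$ (according to $\gamma_i$) and $\mathcal R_\nu$ elsewhere contains $\Omega$ in every completion, hence contains $\Omega$ globally. Since each of $\mathcal D_{\nu_i},\mathcal D_{\nu_i}'$ is a maximal order of $\A_{\nu_i}$ adjacent to its partner, the family $\{\mathcal D^\gamma\}$ parameterizes all $2^m$ isomorphism classes of the genus of $\mathcal R$ exactly as in Section \ref{section:parameterizing}; as every class then has a representative containing $\Omega$, every order in the genus of $\mathcal R$ admits an embedding of $\Omega$. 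The main obstacle is the inert-prime step of the previous paragraph: one must recognize that $\lambda$ dividing the conductor (equivalently $e\geq1$) is exactly what releases $\Omega_\lambda$ from a single vertex, and verify that the displaced neighbor $\mathcal D_\lambda'$ still absorbs the off-diagonal entries, which carry the saving factor $\pi_\lambda^{\,e}$.
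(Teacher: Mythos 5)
Your proof is correct and follows essentially the same route as the paper's: isolate the inert prime $\lambda \mid d_{\Omega/\mathcal{O}_K}$ (inert by Proposition \ref{propositon:1and2hold}, with $\mathcal{R}_\lambda$ maximal by assumption \eqref{first}), feed it into Lemma \ref{lemma:splitornot}(2) as the distinguished generator, handle the split generators exactly as in Proposition \ref{proposition:case2}, and conclude via the parameterization strategy of Sections \ref{section:parameterizing} and \ref{subsection:obstructionstoselectivity}. The only cosmetic difference is the local verification at $\lambda$: the paper notes $\Omega\subset\mathcal{O}_{K_\lambda}+\pi_\lambda\mathcal{O}_{L_\lambda}$ and $\pi_\lambda\mathcal{D}_\lambda\subset\mathcal{D}_\lambda^\prime$, whereas you check the same containment entrywise through the regular representation (with the transposed choice of adjacent order), which is the same underlying idea.
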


\begin{proof}
	
If condition (3) does not hold, there exists a finite prime $\lambda$ of $K$ with $\lambda\mid d_{\Omega/{\mathcal{O}_K}} $ and which does not split in $L/K$. By Proposition \ref{propositon:1and2hold}, $\lambda$ is unramified in $L/K$, so $\lambda$ is inert in $L$. By assumption \eqref{first}, $(d_{\Omega/{\mathcal{O}_K}},N_\mathcal R)=1$ and since $\lambda \mid d_{\Omega/{\mathcal{O}_K}}$, we have $\lambda\nmid N_\mathcal R$ which implies that $\mathcal R_\lambda$ is maximal. Finally, since $\lambda$ is finite, Proposition \ref{propositon:1and2hold} implies that $\A_\lambda\cong M_2(K_\lambda)$.

Because condition (2) holds, $L\subset K(\mathcal R)$ and by Lemma \ref{lemma:splitornot}
we may choose $\nu_2,...,\nu_m$ such that $\{\overline{e_{\lambda}},\overline{e_{\nu_2}},..,\overline{e_{\nu_m}} \}$ generate $G_\mathcal{R}$, where $\nu_i$ splits in $L/K$ 
for $i=2,..,m$. We also assume that the $\nu_i$ are chosen so that $\mathcal R_{\nu_i}$ is maximal for all $i$. As the $\nu_i$  all split in $L$, for each $i$ we may pick two adjacent maximal orders of $\A_{\nu_i}$ containing $\Omega$ (as in the proof of Proposition \ref{proposition:case2}).

Since $L_\lambda / K_\lambda$ is a quadratic unramified extension and $\lambda$ divides $d_{\Omega/\mathcal{O}_K}$,
$\Omega\subset \Omega\otimes_{\mathcal{O}_K}\mathcal{O}_{K_\lambda}\subset\mathcal{O}_{K_\lambda}+\pi_\lambda\mathcal{O}_{L_\lambda}$. 
Let $\mathcal{D}_\lambda$ be some maximal order of $\A_\lambda$ containing $\mathcal{O}_{L_\lambda}$ and consequently, $\Omega$. All maximal 
orders of $\A_\lambda$ are conjugate, so we may assume that $\mathcal{D}_\lambda=\begin{pmatrix}
  \mathcal{O}_{K_\lambda} & \mathcal{O}_{K_\lambda}\\
 \mathcal{O}_{K_\lambda} &  \mathcal{O}_{K_\lambda}
\end{pmatrix}$. Now consider the maximal order $\mathcal{D}_\lambda^\prime=\begin{pmatrix}
  \mathcal{O}_{K_\lambda} & \pi_\lambda^{-1}\mathcal{O}_{K_\lambda}\\
 \pi_\lambda\mathcal{O}_{K_\lambda} &  \mathcal{O}_{K_\lambda}
\end{pmatrix}$. It is clear that $\pi_\lambda\mathcal{D}_\lambda\subset\mathcal{D}_\lambda^\prime$ 
(viewed as $\mathcal{O}_{K_\lambda}$-modules), so $\pi_\lambda\mathcal{O}_{L_\lambda}\subset\mathcal{D}_\lambda^\prime$. Then 
$\Omega\subset\mathcal{D}^\prime_\lambda$.

Hence $\Omega\subset\mathcal{D}_\nu\cap\mathcal{D}_\nu^\prime$ for all $\nu\in\{\lambda,\nu_2,..,\nu_m\}$ and thus can be embedded into representatives of every isomorphism class in the genus of $\mathcal{R}$.\end{proof}

We now assume that conditions (1) - (3) hold.

\begin{proposition}\label{proposition:case4}
Let notation be as above and suppose that conditions (1) - (3) hold. Then the orders in the genus of $\mathcal{R}$ 
admitting an embedding of $\Omega$ represent exactly half of the isomorphism classes. If $\mathcal E$ is another 
order in the genus of $\mathcal R$ , then $\mathcal E$ admits an embedding of $\Omega$ if and only if $\displaystyle\FrobclassfieldL(\rho(\mathcal R,\mathcal E))$ is the trivial element of $Gal(L/K)$.\end{proposition}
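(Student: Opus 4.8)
The plan is to prove both assertions at once by identifying the set of orders admitting $\Omega$ with the kernel of $\FrobclassfieldL(\rho(\mathcal R,-))$ and then counting. Since admitting an embedding of $\Omega$ and the distance $\rho(\mathcal R,-)$ both depend only on the isomorphism class (the latter by Proposition \ref{proposition:distanceproperties}), I may work with the parameterization $\{\mathcal D^\gamma\}_{\gamma\in(\Z/2\Z)^m}$ of Section \ref{section:parameterizing}. Because condition (2) gives $L\subset K(\mathcal R)$, Lemma \ref{lemma:splitornot} lets me choose the generating set so that $\nu_1$ is inert in $L/K$ and $\nu_i$ splits in $L/K$ for $i>1$. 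First I would record the half that always embeds: arguing as in Proposition \ref{proposition:case2}, at each split prime $\nu_i$ ($i>1$) both $\mathcal R_{\nu_i}$ and its chosen neighbour $\mathcal R_{\nu_i}'$ can be taken to contain $\Omega$, while $\mathcal R_\nu\supseteq\Omega_\nu$ at every other prime; hence every $\mathcal D^\gamma$ with $\gamma_1=0$ contains $\Omega$, and these comprise half the isomorphism classes.

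The crux is the reverse implication: if an order $\mathcal E$ in the genus admits an embedding of $\Omega$, then $\FrobclassfieldL(\rho(\mathcal R,\mathcal E))$ is trivial. By the Remark opening Section \ref{section:main} there is $b\in\A^\times$ with $b^{-1}\Omega b\subseteq\mathcal E$. Writing $\tilde x_\mathcal E=(x_{\mathcal E_\nu})$ as in the paragraph before Theorem \ref{theorem:bijection} (with $\tilde x_\mathcal R=1$) and setting $c_\nu=x_{\mathcal E_\nu}b^{-1}$, localization gives $c_\nu\Omega_\nu c_\nu^{-1}\subseteq\mathcal R_\nu$, while $\Omega_\nu\subseteq\mathcal R_\nu$ already. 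Since $L\subseteq K(\mathcal R)$, the map $\FrobclassfieldL$ on $G_\mathcal R$ is the global Artin map $J_K\to\Gal(L/K)$, so $\FrobclassfieldL(\rho(\mathcal R,\mathcal E))$ is the Artin symbol of the idele $n(\tilde x_\mathcal E)=(n(c_\nu))_\nu\cdot n(b)$, where $n(b)$ denotes the diagonal idele of $n(b)\in K^\times$. That diagonal idele is killed by reciprocity, so it suffices to show the idele $(n(c_\nu))_\nu$ has trivial Artin symbol, which I would do one local factor at a time.

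For the local factors I would invoke Proposition \ref{propositon:1and2hold}, which guarantees that $L/K$ is unramified at every finite prime and ramifies only at the real primes where $\A$ ramifies. At a finite prime splitting in $L$ the local extension is trivial and the symbol vanishes. At a finite prime $\nu$ inert in $L$, condition (3) forces $\nu\nmid d_{\Omega/\mathcal O_K}$, so $\Omega_\nu=\mathcal O_{L_\nu}$ is the ring of integers of the unramified quadratic extension; the key local lemma is that such an $\mathcal O_{L_\nu}$ lies in a \emph{unique} maximal order $\mathcal M$ of $\A_\nu\cong\m2(K_\nu)$, namely the unique vertex of the tree fixed by the non-split unramified torus. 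Hence the unique maximal order containing $\mathcal R_\nu$ is this $\mathcal M$, and $c_\nu\mathcal O_{L_\nu}c_\nu^{-1}\subseteq\mathcal R_\nu\subseteq\mathcal M$ forces $c_\nu\mathcal M c_\nu^{-1}=\mathcal M$ (this $c_\nu\mathcal M c_\nu^{-1}$ being the unique maximal order containing $c_\nu\mathcal O_{L_\nu}c_\nu^{-1}$); thus $c_\nu\in N(\mathcal M)=\gl2(\mathcal O_\nu)K_\nu^\times$, so $n(c_\nu)\in\mathcal O_\nu^\times(K_\nu^\times)^2\subseteq N_{L_\nu/K_\nu}(L_\nu^\times)$ and the symbol vanishes. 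At a ramified real prime $\A_\nu$ is the Hamilton quaternions, so $n(c_\nu)>0$ lies in $N_{\C/\R}(\C^\times)$ and the symbol vanishes there as well. This is the step I expect to be the main obstacle, as it hinges on the uniqueness-of-maximal-order property of the unramified torus; note that this uniqueness is precisely what lets the argument treat the non-maximal primes dividing $N_\mathcal R$ uniformly, since it forces $\mathcal R_\nu\subseteq\mathcal M$ whether or not $\mathcal R_\nu$ is maximal.

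Finally I would assemble the equivalence. The preceding paragraphs give $\{\mathcal E\text{ admits }\Omega\}\subseteq\{\FrobclassfieldL(\rho(\mathcal R,\mathcal E))=1\}$. A direct computation closes the circle: by Section \ref{section:parameterizing}, $\rho(\mathcal R,\mathcal D^\gamma)=\prod_{i=1}^m\overline e_{\nu_i}^{\,\gamma_i}$, and applying $\FrobclassfieldL$ sends $\overline e_{\nu_i}$ to the local Frobenius $\mathrm{Frob}_{\nu_i}\in\Gal(L/K)$, trivial for $i>1$ (split) and nontrivial for $\nu_1$ (inert); hence $\FrobclassfieldL(\rho(\mathcal R,\mathcal D^\gamma))$ is trivial if and only if $\gamma_1=0$. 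Combining with the first paragraph ($\gamma_1=0\Rightarrow$ embeds) yields $\{\text{embeds}\}=\{\gamma_1=0\}=\{\FrobclassfieldL(\rho(\mathcal R,-))=1\}$, which is exactly half of the isomorphism classes and is the asserted criterion.
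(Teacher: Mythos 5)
Correct, and essentially the paper's own argument: you use the same parameterization built from a single inert generator $\nu_1$ (with split generators containing $\Omega$ locally) for the embedding direction, and the same key fact for the converse --- that at an inert prime $\mathcal{O}_{L_\nu}$ lies in a unique maximal order (the paper's Lemma \ref{lemma:uniquevertex}), which forces the local conjugating element to normalize that order and hence have reduced norm of even valuation. The only cosmetic difference is bookkeeping: you dispose of the global conjugating element $b$ by applying Artin reciprocity to the principal idele $n(b)$, whereas the paper replaces $\mathcal{E}$ by its conjugate containing $\Omega$ and invokes Proposition \ref{proposition:distanceproperties}; both devices amount to $K^\times$ lying in the kernel of the Artin map.
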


\begin{proof} By assumption, $\Omega$ is contained in $\mathcal{R}$. Let $\mathcal{E}$ be an order 
in the genus of $\mathcal{R}$. We may choose, by Lemma \ref{lemma:splitornot},  a 
generating set $\{\overline{e_{\nu_i}}\}_{i=1}^m$ of $G_\mathcal{R}$ for which $\nu_1$ is inert 
in $L/K$, and $\nu_i$ splits in $L/K$ whenever $i>1$. Let $\{\mathcal{D}^\gamma\}$ be the parameterization
of the genus of $\mathcal{R}$ associated to this generating set.  As 
the $\{\mathcal{D}^\gamma\}$ span the isomorphism classes in the genus of $\mathcal{R}$, there
exists a $\gamma$ such that $\mathcal{E}\cong\mathcal{D}^\gamma$. Then $\mathcal{E}$ contains a conjugate of 
$\Omega$ if and only if $\mathcal{D}^\gamma$ contains a conjugate of $\Omega$, so we may suppose without loss of generality that $\mathcal{E}=\mathcal{D}^\gamma$.

The proof of Proposition \ref{proposition:case2} shows that the parameterization $\{\mathcal D^\gamma\}$ can be constructed so that for all $\gamma$, $\Omega$ is contained in $\mathcal{D}^\gamma_{\nu_i}$ whenever $i>1$. Suppose that $\displaystyle\Frob(\rho(\mathcal{R},\mathcal{D}^\gamma))$ is trivial in $Gal(L/K)$. Since $\rho(\mathcal R,\mathcal D^\gamma)=\prod_{i=1}^m \overline{e_{\nu_i}}^{\gamma_i}$, this implies that $\displaystyle\Frob(\overline{e_{\nu_1}}^{\gamma_1})$ is trivial (as $\nu_i$ 
splits in $L/K$ whenever $i>1$). As $\displaystyle\Frob(\overline{e_{\nu_1}})$ is 
the non-trivial element of $\displaystyle\Gal(L/K)$, we deduce that $\gamma_1=0$. Therefore 
$\Omega\subset \mathcal{D}^\gamma_{\nu_1}$ and $\Omega\subset\mathcal{D}^\gamma_\nu$ for 
all primes $\nu$ of $K$, hence $\Omega\subset\mathcal{D}^\gamma$.

In order to prove the converse we will need a lemma.

\begin{lemma}\label{lemma:uniquevertex}
If $\nu$ is a finite prime of $K$ which is inert in $L$, then $\mathcal O_{L_\nu}$ is contained in a unique maximal order of $\A_\nu$.	
\end{lemma}
\begin{proof}
By Proposition \ref{propositon:1and2hold}, $\nu$ is unramified in $\A$ so that we may identify $\A_\nu$ with $M_2(K_\nu)$. Suppose that $\mathcal O_{L_\nu}$ is contained in distinct maximal orders $\mathcal M_1, \mathcal M_2$ of $\A_\nu$. All of the maximal orders of $M_2(K_\nu)$ are conjugate, so there exists an element $x\in\A_\nu^\times$ such that $\mathcal M_2 = x\mathcal M_1 x^{-1}$. Because $\nu$ is inert in $L/K$ we may write $\mathcal O_{L_\nu}=\mathcal O_{K_\nu}[\alpha]$ for some $\alpha\in\mathcal O_{L_\nu}^\times$. Because $\alpha\in\mathcal M_1^\times\ \cap \mathcal M_2^\times$, conjugation by $\alpha$ fixes both $\mathcal M_1$ and $\mathcal M_2$ and hence every vertex in the unique path joining $\mathcal M_1$ and $\mathcal M_2$ in the tree of maximal orders of $\A_\nu$. Thus $\alpha$ fixes an edge in the tree of maximal orders of $\A_\nu$. But this contradicts Lemma 2.2 of \cite{chinburg}, proving the lemma.
\end{proof}

We now continue the proof of Proposition \ref{proposition:case4} and suppose now that $\mathcal{E}$ contains a conjugate of $\Omega$ but $\displaystyle\Frob(\rho(\mathcal{R},\mathcal{E}))$ is not trivial. By conjugating $\mathcal{E}$ we may assume that $\Omega$ is contained in $\mathcal{E}$, as $\displaystyle\Frob(\rho(\mathcal{R},\mathcal{E}))$ is unchanged if $\mathcal{E}$ is replaced with a conjugate order (by Proposition \ref{proposition:distanceproperties}). We claim that there is a finite prime $\nu$ which is inert in $L/K$ and for which $\mathcal{R}_\nu$ and $\mathcal{E}_\nu$ are not equal. Suppose to the contrary that $\mathcal E_\mu=\mathcal R_\mu$ for all primes $\mu$ that are inert in $L/K$. Let $y=\tilde{y}_\mathcal E\in J_\A$ be such that $\mathcal E_\lambda=y_\lambda  \mathcal R_\lambda  y_\lambda^{-1}$ for all primes $\lambda$ of $K$. By hypothesis we may take $y_{\mu}=1$ for all primes $\mu$ which are inert in $L/K$. We may also take $y_\mu=1$ for all archimedean primes $\mu$ of $K$.  Consider the element $\rho(\mathcal R,\mathcal E)=n(y)H_\mathcal R$ of $G_\mathcal R$. Its image under $\Frob$ is clearly trivial since $n(y_\mu)=1$ whenever $\mu$ is inert in $L/K$ (here we have used the fact that by Proposition \ref{propositon:1and2hold}, $L/K$ is unramified at all finite primes). This contradicts our assumption that $\displaystyle\Frob(\rho(\mathcal{R},\mathcal{E}))$ be non-trivial and proves our claim.

Let $\nu$ be a finite prime which is inert in $L/K$ and such that $\mathcal R_\nu\ne \mathcal E_\nu$. By Proposition \ref{propositon:1and2hold} $\nu$ is unramified in $\A$. Because condition (3) holds, $\Omega_\nu=\mathcal O_{L_\nu}$. Locally we see that $\Omega_\nu\subset\mathcal R_\nu \subset \mathcal M_\nu$ for some maximal order $\mathcal M_\nu$ of $\A_\nu$. Writing $\mathcal E_\nu = y_\nu\mathcal R_\nu y_\nu^{-1}$, we have $\Omega_\nu\subset \mathcal E_\nu \subset y_\nu \mathcal M_\nu y_\nu^{-1}$. By Lemma \ref{lemma:uniquevertex}, $\mathcal M_\nu=y_\nu\mathcal M_\nu y_\nu^{-1}$, so that $\mathcal M_\nu$ is fixed by $y_\nu$. An element of $\A_\nu$ whose reduced norm has odd valuation fixes no maximal order, so it must be the case that $\ordone(n(y_\nu))$ is even. If this holds for every inert prime $\nu$ for which $\mathcal R_\nu\ne \mathcal E_\nu$, then  $\displaystyle\Frob(\rho(\mathcal{R},\mathcal{E}))$ would be trivial. This contradiction finishes the proof. \end{proof}


\section{An Optimal Embedding Theorem}\label{section:optimal}
Let $\Omega$ be an integral domain whose quotient field $L$ is a quadratic extension of $K$. Recall that an embedding  $\sigma: L\longrightarrow \A$ is an optimal embedding of $\Omega$ into $\mathcal{R}$ if $\sigma(L)\cap \Rc=\sigma(\Omega).$ Given a finite prime $\nu$ of $K$ and an embedding $\sigma: L\longrightarrow \A$, extension of scalars yields an embedding $\sigma_\nu: L_\nu\longrightarrow \A_\nu.$ The following lemma makes clear the relationship between the optimality of an embedding $\sigma$ and the optimality of the induced embeddings $\{\sigma_\nu\}$, where
$\nu$ ranges over the finite primes of $K$.

\begin{lemma}\label{lemma:locglob} Let notation be as above. Then $\sigma$ is an optimal embedding of $\Omega$ into $\Rc$ if and only if for every finite prime $\nu$ of $K$, $\sigma_\nu$ is an optimal embedding
of $\Omega_\nu$ into $\Rc_\nu$.
\end{lemma}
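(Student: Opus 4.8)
The plan is to recast optimality as an equality of two $\Ok$-orders inside $L$, and then to verify that equality one completion at a time. First I would note that, $\sigma$ being injective, the set $\Lambda := L\cap\sigma^{-1}(\Rc)$ is an $\Ok$-order of $L$ containing $\Omega$: it is a subring of $L$, it is integral over $\Ok$ because every element of $\Rc$ is, and it has full rank because $\Omega\subseteq\Lambda$; in particular $\Lambda$ is a finitely generated $\Ok$-module. Applying $\sigma$ gives $\sigma(\Lambda)=\sigma(L)\cap\Rc$, so that $\sigma$ is an optimal embedding precisely when $\Lambda=\Omega$. In exactly the same way, for each finite prime $\nu$ the $\mathcal{O}_\nu$-order $\Lambda^{(\nu)} := L_\nu\cap\sigma_\nu^{-1}(\Rc_\nu)$ has the property that $\sigma_\nu$ is optimal if and only if $\Lambda^{(\nu)}=\Omega_\nu$. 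The lemma therefore reduces to the assertion that $\Lambda=\Omega$ if and only if $\Lambda^{(\nu)}=\Omega_\nu$ for every finite prime $\nu$.

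The key step, and the one I expect to require the most care, is to show that completion commutes with the intersection defining $\Lambda$, i.e. that
\[
\Lambda\otimes_{\Ok}\mathcal{O}_\nu \;=\; \sigma_\nu(L_\nu)\cap\Rc_\nu \;=\; \Lambda^{(\nu)}.
\]
The mechanism is the flatness of the completion $\mathcal{O}_\nu$ over $\Ok$. I would apply $-\otimes_{\Ok}\mathcal{O}_\nu$ to the short exact sequence
\[
0 \longrightarrow \sigma(L)\cap\Rc \longrightarrow \sigma(L)\oplus\Rc \xrightarrow{\ (x,y)\mapsto x-y\ } \sigma(L)+\Rc \longrightarrow 0
\]
of $\Ok$-submodules of $\A$. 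Flatness keeps this sequence exact and keeps the maps $\sigma(L)\otimes_{\Ok}\mathcal{O}_\nu\hookrightarrow\A_\nu$ and $\Rc\otimes_{\Ok}\mathcal{O}_\nu=\Rc_\nu\hookrightarrow\A_\nu$ injective. Using that $K\otimes_{\Ok}\mathcal{O}_\nu=K_\nu$, I identify $\sigma(L)\otimes_{\Ok}\mathcal{O}_\nu$ with $\sigma(L)\otimes_K K_\nu=\sigma_\nu(L_\nu)$; the kernel of the induced map $\sigma_\nu(L_\nu)\oplus\Rc_\nu\to\A_\nu$ is then on the one hand $(\sigma(L)\cap\Rc)\otimes_{\Ok}\mathcal{O}_\nu$ and on the other hand $\sigma_\nu(L_\nu)\cap\Rc_\nu$, which yields the displayed identity. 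The delicate part is purely the bookkeeping of the several inclusions into $\A_\nu$; there is no genuine arithmetic obstruction.

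Finally I would close the argument with the standard local--global principle for lattices. Since $\Omega\subseteq\Lambda$ are both full $\Ok$-lattices in $L$, the quotient $\Lambda/\Omega$ is a finitely generated torsion $\Ok$-module, and such a module is zero if and only if each of its completions $(\Lambda/\Omega)\otimes_{\Ok}\mathcal{O}_\nu\cong\Lambda_\nu/\Omega_\nu$ is zero. Combining this with the key step gives
\[
\Lambda=\Omega \iff \Lambda_\nu=\Omega_\nu \ \text{for all }\nu \iff \Lambda^{(\nu)}=\Omega_\nu \ \text{for all }\nu,
\]
which is exactly the equivalence between optimality of $\sigma$ and optimality of all the $\sigma_\nu$. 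As a consistency check one also notes that the background hypothesis $\sigma(\Omega)\subseteq\Rc$ is itself local--global, being equivalent to $\sigma_\nu(\Omega_\nu)\subseteq\Rc_\nu$ for all $\nu$ via $\Rc=\bigcap_\nu(\A\cap\Rc_\nu)$, so the clause ``embedding into $\Rc$'' matches on the two sides.
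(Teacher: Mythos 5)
Your proof is correct and follows essentially the same route as the paper: both reduce optimality to the equality $\sigma(L)\cap\Rc=\sigma(\Omega)$ of $\Ok$-orders in $L$, and both rest on the identity $(\sigma(L)\cap\Rc)_\nu=\sigma_\nu(L_\nu)\cap\Rc_\nu$ together with the fact that equality of $\Ok$-lattices can be checked on all completions. The only difference is one of packaging: the paper cites the Corollary on p.~85 of \cite{weil} for that identity, whereas you reprove it by flat base change applied to the sequence $0\to A\cap B\to A\oplus B\to A+B\to 0$, which makes the argument self-contained but is not a new idea.
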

\begin{proof} Identify $L$ and $\Omega$ with their images in $\A$ under $\sigma$. We must show that $L\cap\Rc=\Omega\mbox{ if and only if } L_\nu\cap\Rc_\nu=\Omega_\nu \mbox { for all } \nu<\infty.$ This follows from the Corollary on page 85 of \cite{weil}, which implies that $(L\cap\Rc)_\nu=L_\nu\cap\Rc_\nu$ for all finite primes $\nu$ of $K$.\end{proof}

All of the results in Section \ref{section:main} hold for optimal embeddings as well. We begin by proving a few general results in which there are no restrictions on the orders $\Omega$ or $\mathcal R$. Assume there is an optimal embedding $\sigma$ of $\Omega$ into $\Rc$.

\begin{proposition}\label{proposition:optimalcase2} 
If $L\not\subset K(\mathcal{R})$, then every order in the 
genus of $\mathcal{R}$ admits an optimal embedding of $\Omega$.
\end{proposition}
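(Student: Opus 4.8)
The plan is to mirror the strategy used for ordinary embeddings in Proposition~\ref{proposition:case2}, but at each local step replace the construction of two adjacent maximal orders merely \emph{containing} $\Omega$ with a construction of two adjacent maximal orders into which $\Omega$ embeds \emph{optimally}. The key reduction is Lemma~\ref{lemma:locglob}: an embedding $\sigma$ is optimal into a global order if and only if the induced local embeddings $\sigma_\nu$ are optimal into each local factor. Thus optimality is a purely local condition, and it suffices to exhibit, at each relevant prime $\nu_i$, local data for which the embedding is optimal.

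First I would invoke Lemma~\ref{lemma:splitornot}(1): since $L\not\subset K(\mathcal R)$, we may choose the generating set $\{\overline{e_{\nu_i}}\}_{i=1}^m$ of $G_\mathcal R$ so that every $\nu_i$ splits in $L/K$. As in the proof of Proposition~\ref{proposition:case2}, at each such split prime $\lambda=\nu_i$ the algebra $\A_\lambda$ is split, and there is a $K_\lambda$-isomorphism $f_\lambda:\A_\lambda\to M_2(K_\lambda)$ carrying $L_\lambda$ into the diagonal subalgebra. The crucial point for optimality is that when $\lambda$ splits in $L/K$, the completion $L_\lambda\cong K_\lambda\times K_\lambda$ and the diagonal torus, so I can arrange $f_\lambda(\Omega_\lambda)$ to be precisely the set of diagonal matrices lying in $\Omega_\lambda$, and then choose the two adjacent maximal orders $\mathcal D_\lambda,\mathcal D_\lambda^\prime$ so that each meets the diagonal subalgebra in exactly $\Omega_\lambda$ rather than in a strictly larger order. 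Concretely, conjugating $\operatorname{diag}(\mathcal O_{K_\lambda},\mathcal O_{K_\lambda})$ by a suitable diagonal scaling realizes the adjacency required by the local-global construction while preserving the optimal intersection with the image of $L_\lambda$.

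The remaining ingredient is to verify that at all primes $\nu\notin\{\nu_1,\dots,\nu_m\}$ the original optimal embedding $\sigma$ already furnishes an optimal local embedding into $\mathcal R_\nu$, which holds because $\mathcal D_\nu^\gamma=\mathcal R_\nu$ there and $\sigma$ is optimal into $\mathcal R$ by hypothesis; by Lemma~\ref{lemma:locglob} this is equivalent to $\sigma_\nu$ being optimal into $\mathcal R_\nu$ for every finite $\nu$. Assembling the local data via the local-global correspondence, each $\mathcal D^\gamma$ then admits an optimal embedding of $\Omega$, and since $\{\mathcal D^\gamma\}_{\gamma\in(\Z/2\Z)^m}$ represents all isomorphism classes in the genus of $\mathcal R$, every order in the genus admits an optimal embedding of $\Omega$.

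The main obstacle I anticipate is the bookkeeping at the split primes $\lambda$: it is easy to produce adjacent maximal orders \emph{containing} $\Omega_\lambda$, but one must take care that the intersection of each with the diagonal (the image of $L_\lambda$) is exactly $\Omega_\lambda$ and not a larger diagonal order, so that the embedding is genuinely optimal and not merely an embedding. This forces a careful choice of the isomorphism $f_\lambda$ and of the conjugating element realizing the adjacency, and it is where the argument genuinely uses the splitting of $\lambda$ in $L$ (which makes $\Omega_\lambda$ a product order whose optimal intersection can be controlled prime-by-prime on the two diagonal factors).
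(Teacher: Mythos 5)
Your overall architecture (parameterize the genus via split primes, check optimality locally, glue with Lemma~\ref{lemma:locglob}) matches the paper's, but the crucial local step is not just delicate --- it is impossible as you have stated it, and this is a genuine gap. At a split prime $\lambda$, identify $L_\lambda$ with the diagonal subalgebra of $M_2(K_\lambda)$ and suppose $\Omega_\lambda$ has conductor $\pi_\lambda^n\mathcal{O}_\lambda$ with $n\geq 1$ (such primes can certainly occur among split primes; condition (3) of Theorem~\ref{theorem:main} is precisely about them). A maximal order $\operatorname{End}(\Lambda)$ meets the diagonal in the order of conductor $\pi_\lambda^{d}$, where $d$ is the distance in the tree from the vertex $[\Lambda]$ to the apartment stabilized by $L_\lambda^\times$ (the line of classes of lattices $\pi^a\mathcal{O}_\lambda e_1+\pi^b\mathcal{O}_\lambda e_2$). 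So the maximal orders meeting the diagonal in \emph{exactly} $\Omega_\lambda$ are the vertices at distance exactly $n$ from this line; and in a tree, two adjacent vertices have distances to a geodesic line differing by exactly $1$. Hence no two \emph{adjacent} maximal orders can both intersect the diagonal in $\Omega_\lambda$ when $n\geq 1$: no choice of $f_\lambda$ or of the conjugating element can produce the pair $\mathcal{D}_\lambda,\mathcal{D}_\lambda^\prime$ you require. Indeed your own concrete suggestion betrays this: diagonal conjugates of $M_2(\mathcal{O}_{K_\lambda})$ all lie on the apartment, so each meets the diagonal in the full maximal order $\mathcal{O}_{K_\lambda}\times\mathcal{O}_{K_\lambda}$, which is strictly larger than $\Omega_\lambda$ --- the embedding is then not optimal.

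The missing idea, which is how the paper proceeds, is to sidestep the conductor primes entirely rather than fight them. Only finitely many primes divide $d_{\Omega/\mathcal{O}_K}$, so by Lemma~\ref{lemma:ggen} (taking $S$ to include these primes) combined with Lemma~\ref{lemma:splitornot}(1), you may choose the generating set $\{\overline{e_{\nu_i}}\}$ so that each $\nu_i$ splits in $L$, $\mathcal{R}_{\nu_i}$ is maximal, \emph{and} $\Omega_{\nu_i}=\mathcal{O}_{L_{\nu_i}}$ is the maximal order of $L_{\nu_i}$. Then the two adjacent maximal orders from the proof of Proposition~\ref{proposition:case2} work verbatim, because every embedding of a \emph{maximal} quadratic order into a quaternion order is automatically optimal --- no control of the intersection is needed at the $\nu_i$. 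Your treatment of the primes $\nu\notin\{\nu_i\}$ via Lemma~\ref{lemma:locglob} is correct and agrees with the paper; it is only the local construction at the $\nu_i$ that must be repaired in this way.
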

\begin{proof}
If $L\not\subset K(\mathcal{R})$, we may choose a generating set $\{\overline{e}_{\nu_i}\}_{i=1}^m$ of $G_\Rc$ 
	such that both $\Rc_{\nu_i}$ and $\Omega_{\nu_i}$ are maximal for all $i$. As in the proof of 
	Proposition \ref{proposition:case2}, we may construct a parameterization $\{\mathcal{D}^\gamma\}$ of 
	the genus of $\Rc$ such that each $\mathcal{D}^\gamma_{\nu_i}$ contains $\sigma_{\nu_i}(\Omega_{\nu_i})$. 
	However every embedding of a maximal quadratic order into a quaternion order is optimal. In particular, $\sigma_{\nu_i}$ is 
	an optimal embedding of $\Omega_{\nu_i}$ into $\mathcal{D}^\gamma_{\nu_i}$ for all $\gamma$ and for $i=1,\dots,m$. 
	As $\sigma$ is an optimal embedding of $\Omega$ into $\mathcal{R}$, Lemma \ref{lemma:locglob} implies that $\sigma_\nu$ is
	an optimal embedding of $\Omega_\nu$ into $\mathcal{R}_\nu$ for all $\nu$. As $\mathcal{D}^\gamma_\nu=\mathcal{R}_\nu$  
	whenever $\nu\not\in\{\nu_i\}$, we conclude from Lemma \ref{lemma:locglob} that $\sigma$ is an optimal embedding of $\Omega$ into $\mathcal{D}^\gamma$.
\end{proof}

\begin{proposition}\label{proposition:optimalLinclassfield} If $L\subset K(\mathcal{R})$, then the proportion of isomorphism classes in the genus of $\mathcal R$ whose representatives admit an optimal embedding of $\Omega$ is at least $1/2$.
\end{proposition}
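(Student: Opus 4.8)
The plan is to run the argument of Proposition~\ref{proposition:Linclassfield} verbatim at the level of containments, and then to promote every containment to optimality using Lemma~\ref{lemma:locglob} together with the maximal-order trick from Proposition~\ref{proposition:optimalcase2}. First I would identify $L$ and $\Omega$ with their images under the given optimal embedding $\sigma$, so that $\Omega\subset\mathcal R$; by Lemma~\ref{lemma:locglob} this means $\sigma_\nu$ is an optimal embedding of $\Omega_\nu$ into $\mathcal R_\nu$ for every finite prime $\nu$.

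Since $L\subset K(\mathcal R)$, Lemma~\ref{lemma:splitornot}(2) lets me pick a generating set $\{\overline{e_{\nu_i}}\}_{i=1}^m$ of $G_\mathcal R$ with $\nu_1$ inert in $L/K$ and each $\nu_i$ ($i>1$) split in $L/K$. As Lemma~\ref{lemma:ggen} permits the generating primes to avoid any prescribed finite set, I would additionally require that every $\nu_i$ split in $\A$, that each $\mathcal R_{\nu_i}$ be maximal, and --- this is the key extra demand --- that $\Omega_{\nu_i}$ be maximal for all $i>1$ (possible because $\Omega$ fails to be maximal at only the finitely many primes dividing $d_{\Omega/\mathcal{O}_K}$). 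Let $\{\mathcal D^\gamma\}$ be the parameterization of the genus of $\mathcal R$ attached to this generating set. Because each $\nu_i$ with $i>1$ splits in $L$, the construction in Proposition~\ref{proposition:case2} produces the two adjacent local maximal orders $\mathcal D_{\nu_i},\mathcal D_{\nu_i}^\prime$, both of which contain $\sigma_{\nu_i}(\Omega_{\nu_i})$.

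It then remains to verify, prime by prime, that $\sigma$ is an optimal embedding of $\Omega$ into $\mathcal D^\gamma$ for every $\gamma$ with $\gamma_1=0$. At $\nu_1$ we have $\mathcal D^\gamma_{\nu_1}=\mathcal R_{\nu_1}$, so $\sigma_{\nu_1}$ is optimal by the original optimality of $\sigma$; the same holds at every prime $\nu\notin\{\nu_i\}$, where $\mathcal D^\gamma_\nu=\mathcal R_\nu$. At each $\nu_i$ with $i>1$, the order $\mathcal D^\gamma_{\nu_i}$ is a maximal order containing $\sigma_{\nu_i}(\Omega_{\nu_i})$; since $\Omega_{\nu_i}$ is maximal, the intersection $\sigma_{\nu_i}(L_{\nu_i})\cap\mathcal D^\gamma_{\nu_i}$ is an $\mathcal{O}_{\nu_i}$-order containing the maximal order $\sigma_{\nu_i}(\Omega_{\nu_i})$ and hence equals it, so $\sigma_{\nu_i}$ is optimal. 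Lemma~\ref{lemma:locglob} then gives that $\sigma$ is a global optimal embedding of $\Omega$ into $\mathcal D^\gamma$ whenever $\gamma_1=0$. As the parameterization shows the $2^m$ orders $\mathcal D^\gamma$ to be pairwise non-isomorphic representatives of the genus, the subfamily $\{\mathcal D^\gamma:\gamma_1=0\}$ comprises exactly $2^{m-1}$ isomorphism classes, each admitting an optimal embedding of $\Omega$; this yields the claimed proportion of at least $1/2$.

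The step I expect to require the most care is the inert prime $\nu_1$: there I can neither arrange $\Omega_{\nu_1}$ to be maximal nor freely replace the local order by an adjacent one while keeping $\Omega$ inside, so I am forced to keep $\mathcal D^\gamma_{\nu_1}=\mathcal R_{\nu_1}$ and rely on the hypothesis that $\sigma$ was optimal at the outset. Freezing this single coordinate to $\gamma_1=0$ is precisely what selects half of the index set $(\Z/2\Z)^m$, which is why the bound that emerges is exactly $1/2$ rather than something finer.
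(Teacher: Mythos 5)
Your proof is correct and follows essentially the same route as the paper: rerun the parameterization of Proposition~\ref{proposition:Linclassfield} with $\nu_1$ inert and the remaining $\nu_i$ split, arrange (via Lemma~\ref{lemma:ggen}/Lemma~\ref{lemma:splitornot}) that $\Omega_{\nu_i}$ is maximal at the split generating primes, and upgrade containment to optimality using the fact that embeddings of maximal quadratic orders are optimal together with Lemma~\ref{lemma:locglob}. The only (harmless) variation is at the inert prime: the paper also takes $\Omega_{\nu_1}$ maximal and applies the same maximality argument there, whereas you freeze $\gamma_1=0$ so that $\mathcal D^\gamma_{\nu_1}=\mathcal R_{\nu_1}$ and inherit optimality from the hypothesis on $\sigma$.
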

\begin{proof}
Let $\{\mathcal{D}^\gamma\}$ be a parameterization of the genus of $\Rc$ and let $\mathcal{D}^{\gamma_0}\in\{\mathcal{D}^\gamma\}$ contain $\sigma(\Omega)$ (i.e. $\sigma$ is an embedding of $\Omega$ into $\mathcal{D}^{\gamma_0}$). We show that for every finite prime $\nu$ of $K$, $\sigma_\nu$ is an optimal embedding of $\Omega_\nu$ into
	$\mathcal{D}^{\gamma_0}_\nu$. If $\nu\not\in\{\nu_i\}$ then $\mathcal{D}^{\gamma_0}_\nu=\Rc_\nu$ and $\sigma_\nu$ is optimal by Lemma \ref{lemma:locglob}. As $\Omega_{\nu_i}$ can be taken to be the maximal order of $L_{\nu_i}$ (by suitably choosing the generators $\{\overline{e}_{\nu_i}\}$ of $G_\Rc$) and every embedding of a maximal quadratic order into a quaternion order is optimal, we see that 
	the embeddings $\{\sigma_{\nu_i}\}_{i=1}^m$ are optimal as well. By Lemma \ref{lemma:locglob} $\sigma$ is an optimal embedding of $\Omega$ into $\mathcal{D}^{\gamma_0}$, finishing our proof.
\end{proof}

We now adopt the assumptions outlined in the beginning Section \ref{subsection:aselectivitytheorem} and prove an analog of Theorem \ref{theorem:main} for optimal embeddings.

\begin{theorem}\label{theorem:optimalmain}
Let notation be as above and suppose that assumptions (\ref{first}) and (\ref{second}) are satisfied. If an optimal embedding of $\Omega$ into $\mathcal{R}$ exists then every order in the genus of $\mathcal{R}$ admits an optimal embedding of $\Omega$ except when the following conditions hold:

\begin{enumerate}

\item We have a containment of fields $L\subset K(\mathcal{R})$.

\item All primes of $K$ which divide the relative discriminant ideal $d_{\Omega/\mathcal{O}_K}$ of $\Omega$ split in $L/K$.

\end{enumerate}

Suppose now that (1) and (2) hold. Then the isomorphism classes in the genus of $\mathcal{R}$ whose representatives admit an optimal embedding of $\Omega$ comprise exactly half of the isomorphism classes. If $\mathcal{R}$ admits an optimal embedding of $\Omega$ and $\mathcal E$ is another order in the genus of $\mathcal R$, then $\mathcal E$ admits an optimal embedding of $\Omega$ if and only if $\displaystyle\FrobclassfieldL(\rho(\mathcal R,\mathcal E))$ is the trivial element of $Gal(L/K)$.
\end{theorem}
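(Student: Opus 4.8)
The plan is to mirror, case by case, the proof of Theorem \ref{theorem:main}, upgrading each containment to an \emph{optimal} containment by means of Lemma \ref{lemma:locglob}, which reduces optimality of a global embedding to optimality of all of its local components. The guiding principle is that every embedding of a \emph{maximal} quadratic order into a quaternion order is automatically optimal; consequently, whenever a generating prime $\nu_i$ can be chosen with $\Omega_{\nu_i}$ maximal, the local embedding $\sigma_{\nu_i}$ is optimal into any local order containing its image, and the arguments of Theorem \ref{theorem:main} transfer with little change. Propositions \ref{proposition:optimalcase2} and \ref{proposition:optimalLinclassfield} already dispose of the case $L\not\subset K(\mathcal R)$ and give the lower bound $1/2$ when $L\subset K(\mathcal R)$; what remains is the optimal analogue of Proposition \ref{proposition:case3} (condition (2) fails) and of Proposition \ref{proposition:case4} (both conditions hold).

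First I would treat the selective case, when (1) and (2) hold, as it is the cleaner of the two. The converse direction is immediate: an optimal embedding is in particular an embedding, so if $\mathcal E$ admits an optimal embedding of $\Omega$ then Proposition \ref{proposition:case4} forces $\Frob(\rho(\mathcal R,\mathcal E))$ to be trivial. For the forward direction I would choose, via Lemma \ref{lemma:splitornot}, a generating set with $\nu_1$ inert in $L/K$ and $\nu_i$ split for $i>1$; since condition (2) guarantees that no prime dividing $d_{\Omega/\mathcal O_K}$ is inert, the inert prime $\nu_1$ and the split primes $\nu_i$ may all be taken away from $d_{\Omega/\mathcal O_K}$, so that each $\Omega_{\nu_i}$ is the maximal order of $L_{\nu_i}$. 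Exactly as in Proposition \ref{proposition:optimalLinclassfield}, the given $\sigma$ is then optimal into every $\mathcal D^\gamma$ with $\gamma_1=0$: at $\nu_1$ one uses that, by Lemma \ref{lemma:uniquevertex}, $\mathcal R_{\nu_1}$ is the unique maximal order containing $\sigma_{\nu_1}(\mathcal O_{L_{\nu_1}})$, so $\sigma_{\nu_1}$ is optimal into $\mathcal D^\gamma_{\nu_1}=\mathcal R_{\nu_1}$ precisely when $\gamma_1=0$. These orders are exactly the trivial-Frobenius half, and together with the converse this yields both the ``exactly half'' count and the Frobenius criterion.

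The substantive obstacle is the optimal analogue of Proposition \ref{proposition:case3}: I must show that when some prime $\lambda\mid d_{\Omega/\mathcal O_K}$ is inert in $L/K$, every order in the genus of $\mathcal R$ admits an \emph{optimal} embedding of $\Omega$. Here the strategy of Proposition \ref{proposition:case3} breaks down, and it is worth isolating why. Writing $\Omega_\lambda=\mathcal O_{K_\lambda}+\pi_\lambda^{\,c}\mathcal O_{L_\lambda}$ with $c\ge 1$ (as $\lambda\mid d_{\Omega/\mathcal O_K}$ and $L_\lambda/K_\lambda$ is unramified by Proposition \ref{propositon:1and2hold}), a local computation on the tree of $\A_\lambda\cong M_2(K_\lambda)$ shows that, for a fixed embedding, the maximal orders \emph{optimally} containing $\Omega_\lambda$ are precisely those at distance $c$ from the vertex fixed by $\mathcal O_{L_\lambda}$, and two adjacent maximal orders can never both lie on this sphere. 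Thus the single embedding $\sigma$ that sufficed in Proposition \ref{proposition:case3}, where mere containment in the radius-$c$ ball was enough, cannot be optimal into both $\mathcal D_\lambda$ and $\mathcal D_\lambda^\prime$. What rescues the statement is the companion local fact that \emph{every} maximal order of $\A_\lambda$ admits an optimal embedding of $\Omega_\lambda$: conjugating the embedded field repositions the distance-$c$ sphere so that it passes through any prescribed vertex, so the local optimal-embedding condition is non-vacuous at $\lambda$, and it is trivially so at the split primes and at the unmodified primes, where $\sigma$ is already optimal.

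The hard part is then the global gluing. For each $\gamma$ with $\gamma_1=1$ I would produce a \emph{single} global embedding of $L$ into $\A$ that localizes to an optimal embedding into $\mathcal D^\gamma_\nu$ at every finite prime $\nu$ simultaneously. Fixing one embedding $L\hookrightarrow\A$ (unique up to conjugacy by Skolem--Noether), this amounts to finding a global $x\in\A^\times$ whose local components move the fixed vertex to distance $c$ from $\mathcal R_\lambda^\prime$ at $\lambda$, carry $\Omega_{\nu_i}$ into $\mathcal D^\gamma_{\nu_i}$ at the split primes, and preserve optimality into $\mathcal R_\nu$ at all remaining primes. The prescribed local conditions are satisfiable one prime at a time by the previous paragraph, and the Eichler condition lets me invoke Strong Approximation, as in the proof of Theorem \ref{theorem:bijection}, to assemble them into a genuine global element; the hypothesis $c\ge1$ is exactly what makes the target vertex at $\lambda$ reachable while keeping the adjacent-order structure. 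I expect this Strong Approximation assembly, together with the verification that it can be arranged without disturbing optimality at the primes dividing $N_\mathcal R$, to be the main technical difficulty; once it is in place, all $2^m$ classes $\mathcal D^\gamma$ acquire optimal embeddings and the proof concludes.
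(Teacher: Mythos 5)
Two of your three cases are correct and coincide with the paper's own argument: when $L\not\subset K(\mathcal{R})$ you defer to Proposition \ref{proposition:optimalcase2}, as does the paper, and in the selective case where (1) and (2) both hold, your argument (converse via Theorem \ref{theorem:main}; forward direction via a parameterization whose inert generator $\nu_1$ avoids $d_{\Omega/\mathcal{O}_K}$, with Lemma \ref{lemma:uniquevertex} controlling the local picture at $\nu_1$) is in substance the paper's combination of Proposition \ref{proposition:optimalLinclassfield} with the remark that an order admitting no embedding of $\Omega$ admits no optimal embedding.

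The gap is in the remaining case, where $L\subset K(\mathcal{R})$ and some prime $\lambda\mid d_{\Omega/\mathcal{O}_K}$ is inert in $L/K$: the strong-approximation gluing you propose there cannot be performed, and the obstruction is precisely the sphere computation you made yourself. By Skolem--Noether every embedding of $L$ into $\A$ has the form $a\sigma a^{-1}$ with $a\in\A^\times$, so an order $\mathcal{E}$ in the genus admits an optimal embedding of $\Omega$ if and only if some order in the isomorphism class of $\mathcal{E}$ contains $\sigma(\Omega)$ optimally, and by Proposition \ref{proposition:distanceproperties} passing to a conjugate order does not change $\Frob(\rho(\mathcal{R},\cdot))$. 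Now let $\mathcal{E}$ be any order of the genus containing $\sigma(\Omega)$ optimally, and write $\mathcal{E}_\nu=y_\nu\mathcal{R}_\nu y_\nu^{-1}$. At an inert prime $\nu\nmid N_\mathcal{R}$ (this includes $\lambda$), optimality of both $\sigma(\Omega)\subset\mathcal{R}$ and $\sigma(\Omega)\subset\mathcal{E}$ places the maximal orders $\mathcal{R}_\nu$ and $\mathcal{E}_\nu$ on the same sphere, of radius equal to the conductor exponent of $\Omega$ at $\nu$, about the vertex fixed by $L_\nu^\times$; two vertices equidistant from a fixed vertex are at even distance from each other, so $n(y_\nu)$ has even valuation. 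At an inert prime $\nu\mid N_\mathcal{R}$ we have $\nu\nmid d_{\Omega/\mathcal{O}_K}$ by \eqref{first}, and Lemma \ref{lemma:uniquevertex} gives even valuation exactly as in Proposition \ref{proposition:case4}; split primes and archimedean places contribute trivially to $\Frob$ by Proposition \ref{propositon:1and2hold}. Therefore $\Frob(\rho(\mathcal{R},\mathcal{E}))$ is trivial for \emph{every} order $\mathcal{E}$ optimally containing $\sigma(\Omega)$, so the classes at nontrivial Frobenius distance from $\mathcal{R}$ admit no optimal embedding of $\Omega$ at all. Equivalently, the local data you want to glue (odd valuation of $n(x)$ at $\lambda$, even valuation at every other inert prime, $n(x)>0$ at the ramified real places) cannot come from any global $x$: applying the Artin map of $L/K$ to the principal idele $n(x)\in K^\times$ would then yield the nontrivial element of $\Gal(L/K)$, contradicting reciprocity, and strong approximation cannot manufacture an element violating Artin reciprocity. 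So the step you defer as the ``main technical difficulty'' is not merely difficult but impossible, and your own analysis, carried one step further, proves that in this case exactly half, rather than all, of the isomorphism classes admit an optimal embedding; the asserted conclusion of the theorem in this case cannot be established. For what it is worth, the paper's own one-paragraph treatment of this case founders on the same rock: it posits a generating set with every $\Omega_{\nu_i}$ maximal together with a parameterization in which each $\sigma(\Omega_{\nu_i})$ lies in two adjacent maximal orders, which Lemma \ref{lemma:uniquevertex} forbids at the inert generator that $L\subset K(\mathcal{R})$ forces every generating set to contain.
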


\begin{proof}

If condition (1) fails then Proposition \ref{proposition:optimalcase2} shows that every order in the genus of $\mathcal R$ admits an optimal embedding of $\Omega$.

If condition (1) is satisfied and condition (2) fails, then we may choose a generating set $\{\overline{e}_{\nu_i}\}_{i=1}^m$ of $G_\Rc$ 
such that both $\Rc_{\nu_i}$ and $\Omega_{\nu_i}$ are maximal for all $i$. The proof of Theorem \ref{theorem:main} again proceeds by
constructing a parameterization $\{\mathcal{D}^\gamma\}$ of the genus of $\Rc$ such that for every $i$, $\sigma(\Omega_{\nu_i})\subset \mathcal{R}_{\nu_i}\cap\mathcal{R}^\prime_{\nu_i}$, where $\mathcal{D}^\gamma_{\nu_i}$ equals either $\mathcal{R}_{\nu_i}$ or $\mathcal{R}^\prime_{\nu_i}$, depending on the parity of $\gamma_i$. The argument used to prove Proposition \ref{proposition:optimalcase2} shows that $\sigma$ is an optimal embedding of $\Omega$ into every order in the genus of $\Rc$. 

We now assume that (1) and (2) are both satisfied and show that the orders in the genus of $\mathcal{R}$
admitting an optimal embedding of $\Omega$ represent exactly one half of the isomorphism classes. Proposition \ref{proposition:optimalLinclassfield} shows that the representatives of at least half of the isomorphism classes of orders in the genus of $\mathcal R$ admit an optimal embedding of $\Omega$. By Theorem \ref{theorem:main}, the orders $\E$ in the genus of $\Rc$ admitting an embedding of $\Omega$ are precisely those for which $\displaystyle\FrobclassfieldL(\rho(\mathcal R,\mathcal E))$ is the trivial element of $Gal(L/K)$ and represent exactly half of the isomorphism classes. An order in the genus of $\mathcal{R}$ cannot admit an optimal embedding of $\Omega$ if it admits no embedding of $\Omega$, so the isomorphism classes whose representatives admit an optimal embedding of $\Omega$ are precisely those whose representatives admit an embedding of $\Omega$, finishing our proof. \end{proof}


\end{document}